\definecolor{Red}{rgb}{0.7,0,0.1}
\definecolor{Green}{rgb}{0,0.7,0}
\def\url@leostyle{%
 \@ifundefined{selectfont}{\def\UrlFont{\sf}}{\def\UrlFont{\scriptsize\ttfamily}}} \makeatother\urlstyle{leo}
\newtheorem{theorem}{Theorem}
\newtheorem{proposition}[theorem]{Proposition}
\newtheorem{lemma}[theorem]{Lemma}
\theoremstyle{definition}
\newtheorem{definition}[theorem]{Definition}
\theoremstyle{remark}
\newtheorem{remark}[theorem]{Remark}
\numberwithin{equation}{section}
\numberwithin{theorem}{section}
\def\cB{\mathcal{B}}
\def\cF{\mathcal{F}}
\def\cH{\mathcal{H}}
\def\cK{\mathcal{K}}
\def\cL{\mathcal{L}}
\def\cN{\mathcal{N}}
\def\cR{\mathcal{R}}
\def\cZ{\mathcal{Z}}
\def\bE{\mathbb{E}}
\def\bN{\mathbb{N}}
\def\bP{\mathbb{P}}
\def\bQ{\mathbb{Q}}
\def\bR{\mathbb{R}}
\newcommand{\1}{\mathbbm{1}}                     % preferable way of writing indicator function
\newcommand{\set}[1]{\{#1\}}            % set: {xyz} to be used for inline formulas
\renewcommand{\d}{\operatorname{d}\!}   % d in dt
\title{Hypothesis testing for stochastic PDEs driven by additive noise}
\author{
Igor Cialenco\thanks{Research of IC was partially supported by NSF grant DMS-1211256.} \\
\small  Department of Applied Mathematics \\[-0.6ex]
\small  Illinois Institute of Technology \\[-0.6ex]
\small  10 West 32nd Str, Bld E1, Room 208 \\[-0.6ex]
\small  Chicago, IL 60616-3793  \\[-0.6ex]
\small  \url{igor@math.iit.edu}
\and %\mbox{}\\  %% use either \mbox{}\\ or \and to have one col or two cols with the author
 Liaosha Xu\\
\small  Department of Applied Mathematics \\[-0.6ex]
\small  Illinois Institute of Technology \\[-0.6ex]
\small  10 West 32nd Str, Bld E1, Room 208 \\[-0.6ex]
\small  Chicago, IL 60616-3793  \\[-0.6ex]
\small  \url{lxu29@hawk.iit.edu}
}
\date{First Circulated: August 08, 2013\\ This version: May 22, 2014}
\begin{document}
\maketitle

\begin{abstract}
\noindent
We study the simple hypothesis testing problem for the drift coefficient for stochastic fractional heat equation driven by additive noise. We introduce the notion of asymptotically the most powerful test, and find explicit forms of such tests in two asymptotic regimes: large time asymptotics, and increasing number of Fourier modes. The proposed statistics are derived based on Maximum Likelihood Ratio. Additionally, we obtain a series of important technical results of independent interest: we find the cumulant generating function of the log-likelihood ratio; obtain sharp large deviation type results for $T\to\infty$ and $N\to\infty$.

\bigskip
{\noindent \small
{\it \bf Keywords:} Hypothesis testing for SPDE; Maximum Likelihood Estimator; asymptotically the most powerful test; cumulant generating function; fractional heat equation; additive space-time white noise.

\smallskip
\noindent {\it \bf MSC2010:} 60H15, 35Q30, 65L09}

%\newpage

\end{abstract}

\newpage

\tableofcontents

\newpage

\section{Introduction}
In this paper we study a simple hypothesis testing problem for drift (viscosity) coefficient for some linear parabolic PDEs driven by an additive noise (white in time and possible colored in space).
The underlying assumption is that one path of the solution is observed continuously in time as an element of an infinite dimensional space.
The problem of estimating the drift coefficient in this setup, and assuming that all other parameters are known, has been studied by several authors, starting with the seminal paper \citet{HubnerRozovskiiKhasminskii}.
It is known that the Maximum Likelihood Estimator (MLE) constructed from the projection of the solution on the space spanned by the first $N$ Fourier modes observed over time interval $[0,T]$, is consistent and asymptotically normal in both regimes: as number of Fourier coefficients of the solution tends to infinity and time horizon $T$ is fixed and finite (cf. the survey paper \citet{Lototsky2009Survey} and references therein), or as $T\to\infty$ and $N$ is fixed and finite (the proof is presented in this work).
While over the last decade significant progress has been made in studying asymptotic properties of the MLE type estimators for various classes of SPDEs,
and many properties have been well understood, the problem of hypothesis testing and goodness-of-fit tests for SPDEs remains an open field.
This paper is the first attempt to address this clearly important and, as it turns out, also challenging problem.
We take the similar  spectral approach as in the papers mentioned above, and consider the simple hypothesis testing problem $\theta=\theta_0$ versus $\theta=\theta_1$ for the drift coefficient $\theta$.
Having an MLE and its consistency and asymptotic normality at hand, as one may expect, likelihood ratio test and Neyman-Pearson type lemma should answer the question.
Indeed, such result can be easily derived, however, due to the fact that the distribution of the likelihood ratio is hard to find, the common problem is to find the threshold of the likelihood test, for which apparently there is no an explicit solution.
There are several methods to overcome this problem, one of which is the asymptotic approach with the class of tests of given asymptotic level - the method that we take as a baseline too.
For the case of finite dimensional ergodic diffusion processes with continues time observation, the similar approach for large time asymptotics can be traced back to \citet{Kutoyants1975}; for more discussions see also \citet{KutoyantsBook2004} and references therein\footnote{We should mention that there is a significant literature devoted to goodness-of-fit tests for diffusion processes, which we will not list here.}. We consider the asymptotics in two regimes mentioned above: (i) large time asymptotics, $T\to\infty$ with $N$ fixed;  (ii)  number of Fourier modes increases, $N\to\infty$, with fixed $T$.
As a novel step into this direction, we introduce the notion of asymptotically the most powerful test in a given class.
The main contribution of the paper is identification of the `right class' of tests with a given (asymptotic) significance level, and then we find (asymptotically) the most powerful test in this class that can be computed explicitly.
The main challenge is to control the power of the tests as $T\to\infty$ or $N\to\infty$. For the case of large time asymptotics, the ideas are rooted to some results and methods from large deviations for ergodic stochastic processes. We develop some new results on sharp large deviations tailored to our needs. Since the projected system of the first $N$ Fourier modes essentially represents an $N$ dimensional system of stochastic differential equations, the obtained results for the case of large time asymptotics can be applied in particular to finite dimensional ergodic processes too.
For the case $N\to\infty$, while the approach is similar to that from large time asymptotics, we establish some new results that informally can be called (sharp) large deviations principles in number of Fourier modes $N$ of the underlying observed solution of the considered SPDE. Using this, we derive the form of asymptotically the most powerful test in a suitable class of tests with a given asymptotic level. We also believe that the obtained results will be instrumental and serve as a foundation for future developments in the area of hypothesis testing and goodness-of-fit tests for SPDEs.

It is true that the main results are based on continuous time sampling, and may appear as being mostly of theoretical interest.
Eventually, in real life experiments, the random field would be measured/sampled on a discrete grid, both in time and spatial domain.
However, the main ideas of this paper have a good prospect to be applied to the case of discrete sampling too.
If we assume that the first $N$ Fourier modes are observed at some discrete time points, then, to apply the theory presented here, one essentially has to approximate some integrals, including some stochastic integrals, convergence of each is well understood.
Of course, the exact rates of convergence still need to be established.
In real life applications, given the current technological possibilities, usually data is sampled in the time component at sufficiently high frequencies (climate, oceanography, finance etc).
The challenge comes with spacial data, which can be sparse and not evenly distributed.
The connection between discrete observation in space and the approximation of Fourier coefficients is more intricate. Natural way is to use discrete Fourier transform for such approximations. While intuitively clear that increasing the number of observed spacial points will yield to the computation of larger number of Fourier coefficients (number of points from physical domain approximatively corresponds to the number of Fourier modes), it is less obvious, in our opinion, how to prove consistency of the estimators, asymptotic normality, and corresponding properties from hypothesis testing problem.
Thus, both asymptotic regimes considered here, and in the existing literature, are relevant for practical proposes.
In a follow up paper \cite{CialencoXu2013-2}, we present some numerical results on estimation of statistical errors, including for the tests proposed in this paper.
Moreover, using the sharp large deviations results developed here, as well as the underlying ideas of establishing the `optimal' class of tests, we derive a new likelihood ratio type test, that may not be the most powerful, but which allows to have exact control on the Type~I and Type~II errors for finite $T$ and $N$.

The paper is organized as follows. In Section~\ref{sec:MathSetting} we introduce the main object of our study, fractional\footnote{By fractional we mean that the negative of Laplace operator is taken with fractional powers.} stochastic heat equation driven by additive space-time white noise. We provide sufficient conditions for the existence and uniqueness of the solution.  In Section~\ref{sec:ConsistAndNormality} we set up the statistical problem of estimation of the drift coefficient, derive the MLE estimators, and establish the consistency and asymptotic normality of these estimators. For sake of completeness we prove the asymptotic properties of MLEs for large time asymptotics case; a result considered as known but to the best of our knowledge not shown rigourously in the existing literature.
Section~\ref{sec:HypTest} we formulate the problem and present the main results of our study.
We start with a reasonable class of rejection regions based on the likelihood-ratio statistics, and prove a version of Neyman-Pearson Lemma for such statistics.
Also here we discuss the deficiencies of this class of tests. In Section~\ref{sec:mainResults} we introduce the relevant classes of tests, and present the main results (without proofs).
The detailed proofs are presented in Section~\ref{sec:proofs}. Each asymptotic regime is considered in a separate subsection - Section~\ref{ssec:AsymptoticT} for the case of large time asymptotics, and respectively Section~\ref{sec:AsympMethodN} for large number of Fourier modes.
We took the road of rather heuristic and linear exposition of the method, by starting with a natural candidate, denoted by  $\cK_\alpha^*$, as a class of tests. 
Using some existing results from theory of large deviations for ergodic processes, we hint why this class is not good for our purposes.
Section~\ref{ssec:CumFuncT} contains some technical results, and, in particular, we find the cumulant generating function of the log-likelihood ratio, after appropriate use of Feynman-Kac formula and solving  the corresponding PDE; a result itself of independent interest. 
Also here, we obtain a sharp large deviation result for the log-likelihood ratio.  Consequently, we show that there are statistics in $\cK_\alpha^*$ with higher power than the naturally derived likelihood ratio ones.
Moreover, we show why the natural choice of class of tests $\cK_\alpha^*$ in principle  is not a reasonable one.
Finally, in Section~\ref{sec:newClassT}, we present the new class of rejection regions and prove the main result for large times. 
Taking advantage of the heuristic exposition from previous section, and since the general road of deriving the reasonable class of test for $N\to\infty$ is similar to the case $T\to\infty$, we start with a series of technical lemmas, then prove the main results.
We want to mention, that although the general agenda for $N$-case is similar to $T$-case, most of the auxiliary results were proved by different methods.
For convenience, in Appendix~\ref{sec:appendix} we present some known results, as well as the proofs of selected technical results from this paper.

As mentioned above, this is the first attempt to address the hypothesis testing problem for SPDEs, and clearly many problems are still open.
First natural problem is to consider the case when both $T,N$ are getting large simultaneously. From practical point of view this can be less important, however from theoretical point of view this is an important problem, with potentially nontrivial technical challenges.
In the present work, we considered rejection regions of the simplest form - likelihood ratio larger than a threshold. Generally speaking, one can consider rejection regions of the form `likelihood ratio belongs to a Borel set'. This generalization, in particular, can produce tests with `faster convergence rate', as some of our preliminary results show. Clearly, a different method of finding the threshold $c_\alpha$ from Neyman-Pearson Lemma is to employ some numerical methods, such as (quasi) Monte Carlo.
Another natural follow up problem is to consider composite hypothesis. Using the results from this paper, one can derive similar results for hypothesis testing problem of the form $\theta=\theta_0$ vs $\theta>\theta_1$ with $\theta_0\neq \theta_1$. The authors plan to address some of these problems in their future works.

\section{Mathematical Setting}\label{sec:MathSetting}
Let $(\Omega,\cF,\set{\cF_t}_{t\geq 0}, \bP)$ be a stochastic basis with usual assumptions, and let $\set{w_j,\ j\geq 1}$ be a collection of independent standard Brownian motions on this basis.
Let $G$ be a bounded and smooth domain in $\bR^d$, and let us denote by $\Delta$ the Laplace operator on $G$ with zero boundary conditions.
The corresponding scale of Sobolev spaces will be denoted by $H^s(G)$, or simply $H^s$, for $s\in\bR$.
It is well known (cf. \citet{Shubin}) that:
a) the set $\set{h_k}_{k\in\bN}$ of eigenfunctions of $\Delta$ forms a complete orthonormal system in $L^2(G)$;
b) the corresponding eigenvalues $\rho_k,k\in\bN$, can be arranged such that $0<-\rho_1\leq - \rho_2\leq \ldots$, and there exists a positive constant $\varpi$ so that
\begin{equation}\label{eq:asymEigenv}
  \lim_{k\to\infty} |\rho_k|k^{-2/d} = \varpi.
\end{equation}
In what follows we will use the notation $\lambda_k:=\sqrt{-\rho_k}, \ k\in\bN$, and $\Lambda=\sqrt{-\Delta}$.
Also, for two sequences of numbers $\{a_n\}$ and $\{b_n\}$, we will write $a_n \sim b_n$, if there exists a nonzero and finite number $c$ such that $\lim_{n\to\infty}a_n/b_n=c$.

We consider the following Stochastic PDE
\begin{equation}\label{eq:mainSPDE}
\d U(t,x) + \theta (-\Delta)^\beta U(t,x)\d t = \sigma \sum_{k\in\bN} \lambda_k^{-\gamma}h_k(x)\d w_k(t), \quad t\in[0,T], \ U(0,x) = U_0, \ x\in G,
\end{equation}
where $\theta>0$, $\beta>0, \ \gamma \geq 0$, $\sigma\in\bR\setminus\set{0}$, and $U_0\in H^s(G)$ for some $s\in\bR$.

Using standard arguments (cf. \citet{RozovskiiBook,ChowBook}), it can be proved that if $2(\gamma-s)/d > 1$, then \eqref{eq:mainSPDE} has a unique  solution (weak in PDE sense, and strong in probability sense)
\begin{equation}\label{eq:existuniqueSol}
U\in L_2(\Omega\times[0,T]; H^{s+\beta})\cap L^{2}(\Omega; C((0,T); H^s)).
\end{equation}

\begin{remark}

We would like to mention that the obtained results can be stated in a more general setup similar to that from \citet{HubnerRozovskiiKhasminskii,HuebnerLototskyRozovskii97,Lototsky2009Survey}.
The authors decided to take powers of negative Laplace operator instead of a general positive defined linear operator that generates a scale of Hilbert spaces (cf. the setup from \citet{CialencoLototsky2009}) with eigenfunctions forming a complete orthonormal system. The only essential difference in our analysis would be that the results and their derivation would be written in the terms of the asymptotics of the eigenvalues of that positive defined operator. Since for Laplace operator the asymptotic behavior of eigenvalues  is rather simple, and given by \eqref{eq:asymEigenv}, the exposition of the results becomes easier to follow, in our opinion.
\end{remark}

\subsection{Consistency and Asymptotic Normality}\label{sec:ConsistAndNormality}
Throughout, we will assume that $s\geq0,$ and thus $U\in L_2(\Omega\times[0,T]; H^{\beta})\cap L^{2}(\Omega; C((0,T); H^0))$.
We denote by $u_k,k\in\bN,$ the Fourier coefficient of the solution $u$ of \eqref{eq:mainSPDE} with respect to $h_k,k\in\bN$, i.e. $u_k(t) = (U(t),h_k)_0, k\in\bN$.
Let $H_N$ be the finite dimensional subspace of $L_2$ generated by $\set{h_k}_{k=1}^N$, and denote by $P_N$ the projection operator of $L_2$ into $H_N$, and put $U^N = P_NU$, or equivalently $U^N:=(u_1,\ldots,u_N)$. Note that each Fourier mode $u_k,k\in\bN$, is an Ornstein-Uhlenbeck process with dynamics given by
\begin{equation}\label{eq:OU-Fourier}
\d u_k = -\theta \lambda_k^{2\beta} u_k\d t + \sigma \lambda_k^{-\gamma} \d w_k(t), \quad  u_k(0) = (U_0,h_k), \ t\geq 0.
\end{equation}

We denote by $\bP^{N,T}_{\theta}$ the probability measure on $C([0,T]; H_N)\backsimeq C([0,T]; \bR^N)$ generated by the $U^N$.
The measures $\bP^{N,T}_{\theta}$ are equivalent for different values of the parameter $\theta$, and the Radon-Nikodym derivative, or Likelihood Ratio, has the form
\begin{align}
L(\theta_0,\theta;U^N_T)=\frac{\bP^{N,T}_{\theta}}{\bP^{N,T}_{\theta_{0}}} =
\exp\Big(-(\theta-\theta_0)\sigma^{-2} & \sum_{k=1}^N\lambda_k^{2\beta+2\gamma}\big(\int_0^T  u_k(t)du_k(t) \nonumber \\
 & +\frac{1}{2}(\theta+\theta_0)\lambda_k^{2\beta}\int_0^Tu_k^2(t)dt\big)\Big). \label{eq:RadonNikodymUn}
\end{align}
Maximizing the Log of the likelihood ratio with respect to the parameter of interest $\theta$, we get the following Maximum Likelihood Estimator (MLE)
\begin{equation}\label{eq:MLE-UN}
\widehat{\theta}_T^N = -\frac{\sum_{k=1}^{N}\lambda_k^{2\beta+2\gamma}\int_0^T u_k(t)du_k(t)}{\sum_{k=1}^{N}\lambda_k^{4\beta+2\gamma}\int_0^T u_k^2(t)dt},
\quad N\in\bN, T>0.
\end{equation}

\begin{theorem}
Assume that $2\gamma>d$. Then,
\begin{enumerate}
  \item For every $N\in\bN$ and $T>0$, the estimator $\widehat{\theta}^N_T$ is an unbiased estimator of $\theta$.
  \item  (Consistency and asymptotic normality in number of Fourier coefficients) \\
  For every fixed $T>0$,
  \begin{equation}\label{eq:ConstN}
  \lim_{N\to\infty} \widehat{\theta}^N_T = \theta, \quad \textrm{a.e.}
  \end{equation}
  and
  \begin{equation}\label{eq:AsymNormN}
 \lim_{N\to\infty} N^{\beta/d+\frac{1}{2}} \left( \widehat{\theta}^N_T -\theta\right) \overset{d}= \cN\left(0,\frac{(4\beta/d+2)\theta}{\varpi^{\beta}\sigma^2T}\right).
  \end{equation}
  \item (Consistency and asymptotic normality in large time asymptotics) \\
  For every fixed $N\in\bN$,
  \begin{equation}\label{eq:ConstT}
  \lim_{T\to\infty} \widehat{\theta}^N_T = \theta, \quad \textrm{a.e.}
  \end{equation}
  and
  \begin{equation}\label{eq:AsymNormT}
 \lim_{T\to\infty} \sqrt{T} \left( \widehat{\theta}^N_T -\theta\right) \overset{d}= \cN(0,2\theta/M),
  \end{equation}
  where $M=\sum_{k=1}^N\lambda_k^{2\beta}$.
\end{enumerate}
\end{theorem}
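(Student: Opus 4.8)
The plan is to exploit the explicit structure of the Fourier modes as independent Ornstein--Uhlenbeck processes, as given in \eqref{eq:OU-Fourier}, and to reduce everything to known asymptotics for such processes together with Gaussian limit theorems. Write $A_k := \int_0^T u_k(t)\,du_k(t)$ and $B_k := \int_0^T u_k^2(t)\,dt$, so that by \eqref{eq:MLE-UN} we have $\widehat\theta_T^N - \theta = -\big(\sum_k \lambda_k^{2\beta+2\gamma}(A_k + \theta\lambda_k^{2\beta} B_k)\big)\big/\big(\sum_k \lambda_k^{4\beta+2\gamma} B_k\big)$. Using the dynamics \eqref{eq:OU-Fourier}, $du_k = -\theta\lambda_k^{2\beta}u_k\,dt + \sigma\lambda_k^{-\gamma}\,dw_k$, one gets $\lambda_k^{2\gamma}(A_k + \theta\lambda_k^{2\beta}B_k) = \sigma\lambda_k^{\gamma}\int_0^T u_k(t)\,dw_k(t)$, so the numerator of $\widehat\theta_T^N - \theta$ collapses to $\sigma\sum_{k=1}^N \lambda_k^{2\beta+\gamma}\int_0^T u_k\,dw_k$, a sum of stochastic integrals with zero mean. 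Part~1 (unbiasedness) then follows once one checks the integrability needed to conclude $\bE[\int_0^T u_k\,dw_k]=0$; this uses $s\ge 0$ and the standing condition $2\gamma>d$, which via \eqref{eq:asymEigenv} guarantees $\sum_k \lambda_k^{-2\gamma}<\infty$ and hence that the relevant $L^2$ norms are finite. (One should take a little care that unbiasedness of a ratio does not follow formally from zero-mean numerator; the cleanest route is to note the numerator is orthogonal to $\mathcal F_T^{(B)}$ in the appropriate sense, or to cite the corresponding computation for OU processes; I would flag this as the one genuinely delicate point in Part~1.)

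For Part~3, fix $N$ and let $T\to\infty$. Each $u_k$ is an ergodic OU process with stationary variance $\sigma^2\lambda_k^{-2\gamma}/(2\theta\lambda_k^{2\beta})$, so by the ergodic theorem $\frac1T B_k \to \sigma^2\lambda_k^{-2\gamma-2\beta}/(2\theta)$ a.s. Hence $\frac1T\sum_k \lambda_k^{4\beta+2\gamma}B_k \to \frac{\sigma^2}{2\theta}\sum_{k=1}^N\lambda_k^{2\beta} = \frac{\sigma^2 M}{2\theta}$ a.s. For the numerator, $\frac1T \cdot \sigma\sum_k\lambda_k^{2\beta+\gamma}\int_0^T u_k\,dw_k \to 0$ a.s.\ (martingale strong law, since the quadratic variation is $\sigma^2\sum_k\lambda_k^{4\beta+2\gamma}B_k = O(T)$), giving \eqref{eq:ConstT}. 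For asymptotic normality \eqref{eq:AsymNormT}, multiply numerator and denominator by $T^{-1/2}$ and $T^{-1}$ respectively: the denominator converges to $\sigma^2 M/(2\theta)$, and for the numerator $\frac{1}{\sqrt T}\,\sigma\sum_{k=1}^N\lambda_k^{2\beta+\gamma}\int_0^T u_k\,dw_k$ one applies the martingale central limit theorem. Its quadratic variation divided by $T$ is $\frac{\sigma^2}{T}\sum_k\lambda_k^{4\beta+2\gamma}B_k \to \frac{\sigma^4}{2\theta}\sum_k\lambda_k^{2\beta} = \frac{\sigma^4 M}{2\theta}$, so the numerator is asymptotically $\cN(0,\sigma^4 M/(2\theta))$ and Slutsky gives $\sqrt T(\widehat\theta_T^N-\theta) \Rightarrow \cN\big(0,\ (\sigma^4 M/2\theta)/(\sigma^2 M/2\theta)^2\big) = \cN(0,2\theta/M)$, as claimed.

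For Part~2, fix $T$ and let $N\to\infty$. The structure is the same, but now the limit theorems are in the index $k$ rather than in time: the denominator $\sum_{k=1}^N\lambda_k^{4\beta+2\gamma}B_k$ and the numerator $\sigma\sum_{k=1}^N\lambda_k^{2\beta+\gamma}\int_0^T u_k\,dw_k$ are sums of \emph{independent} (across $k$) random variables, since distinct Fourier modes are driven by independent Brownian motions. One computes $\bE[B_k]$ and $\Var[B_k]$ explicitly for a fixed-$T$ OU process started from $u_k(0)=(U_0,h_k)$, obtaining $\bE[B_k]\sim \sigma^2\lambda_k^{-2\gamma-2\beta}\cdot c T$ for an explicit constant and a variance that is lower order after weighting; summing $\lambda_k^{4\beta+2\gamma}\bE[B_k]$ and using \eqref{eq:asymEigenv} (so $\lambda_k^2 = |\rho_k| \sim \varpi k^{2/d}$, hence $\lambda_k^{2\beta}\sim \varpi^\beta k^{2\beta/d}$) gives $\sum_{k=1}^N\lambda_k^{4\beta+2\gamma}B_k \sim \frac{\sigma^2 T}{2}\sum_{k=1}^N\lambda_k^{2\beta} \sim \frac{\sigma^2 T \varpi^\beta}{2}\cdot\frac{N^{2\beta/d+1}}{2\beta/d+1}$ by the law of large numbers for independent summands (after checking a Kolmogorov-type variance condition). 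This yields \eqref{eq:ConstN}. For \eqref{eq:AsymNormN} one renormalizes by $N^{\beta/d+1/2}$: the numerator $\sigma\sum_{k=1}^N\lambda_k^{2\beta+\gamma}\int_0^T u_k\,dw_k$, divided by $N^{\beta/d+1/2}$, is a sum of independent mean-zero terms whose variances sum (using $\bE[(\int_0^T u_k\,dw_k)^2]=\bE[B_k]$ and the above asymptotics) to $\sigma^2\cdot\frac{\sigma^2 T}{2}\cdot\frac{\varpi^\beta}{2\beta/d+1}$ after normalization; verifying the Lindeberg condition (the summands are nicely tailed, being quadratic functionals of Gaussians) then gives a Gaussian limit, and dividing by the denominator limit $\frac{\sigma^2 T\varpi^\beta}{2(2\beta/d+1)}$ and simplifying produces the variance $\frac{(4\beta/d+2)\theta}{\varpi^\beta\sigma^2 T}$ in \eqref{eq:AsymNormN}.

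The main obstacle, I expect, is twofold: first, the bookkeeping of constants in Part~2 --- getting the exact asymptotics of $\bE[B_k]$ for a fixed-$T$, non-stationary OU process (the transient term from $u_k(0)$ and the $\lambda_k^{2\beta}T\to\infty$ behavior of $(1-e^{-2\theta\lambda_k^{2\beta}T})$ both matter at leading order) and then carefully matching them to the stated variance; and second, justifying the relevant limit theorems rigorously --- the martingale CLT in Part~3 requires checking a conditional Lindeberg condition, and the independent-summand CLT in Part~2 requires a variance-summability (Lindeberg/Lyapunov) estimate that in turn needs control of $\Var[B_k]$ and of fourth moments of the stochastic integrals. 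Neither step is conceptually hard given \eqref{eq:OU-Fourier} and \eqref{eq:asymEigenv}, but the constant-tracking in the $N\to\infty$ regime is where errors would most easily creep in.
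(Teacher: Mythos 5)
Your proposal follows the same skeleton as the paper's proof: the key move is the representation \eqref{eq:theta-conv}, $\widehat\theta^N_T - \theta = -\sigma\sum_k\lambda_k^{2\beta+\gamma}\int_0^T u_k\,dw_k\,\big/\,\sum_k\lambda_k^{4\beta+2\gamma}\int_0^T u_k^2\,dt$, and everything else is limit theory applied to the numerator and denominator separately. For Part~3, however, you take a slightly different and in fact more streamlined route than the paper. The paper establishes the a.s.\ convergence $\int_0^T u_k\,dw_k / \int_0^T u_k^2\,dt \to 0$ by explicitly constructing a time-changed Wiener process $v_k(s) = \int_0^{\tau_s} u_k\,dw_k$ with $\tau_s = \inf\{t : \int_0^t u_k^2\,dr > s\}$, verifying via an integral equation on the conditional characteristic function that $v_k$ is a Brownian motion, and then invoking the law of the iterated logarithm; you instead quote the martingale strong law directly, which is essentially what the paper is re-deriving from scratch. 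For the asymptotic normality, the paper applies its Theorem~\ref{th:CLT} mode-by-mode and then multiplies characteristic functions, while you appeal to the martingale CLT in one shot via quadratic variation convergence plus Slutsky; these are equivalent. For Part~2 the paper does not give a proof at all --- it simply cites \citet{HuebnerRozovskii} and \citet{Lototsky2009Survey} --- so your sketch (independent-across-$k$ summands, Lindeberg/Lyapunov, $\lambda_k^{2\beta}\sim\varpi^\beta k^{2\beta/d}$) is supplying an argument the paper omits, and it is the right shape of argument.

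Two small things to fix. First, in the Part~2 constants you systematically drop a factor $1/\theta$: since $\bE[u_k^2(t)] = \frac{\sigma^2\lambda_k^{-2\beta-2\gamma}}{2\theta}(1-e^{-2\theta\lambda_k^{2\beta}t})$, one has $\lambda_k^{4\beta+2\gamma}\bE[B_k]\sim\frac{\sigma^2 T}{2\theta}\lambda_k^{2\beta}$, not $\frac{\sigma^2 T}{2}\lambda_k^{2\beta}$, and likewise in the variance of the numerator; the missing $\theta$'s cancel in your ratio only by luck of the final formula, so be careful when writing this out. Second, your flagged concern about Part~1 is legitimate and, for what it is worth, the paper itself does not address it: from the representation \eqref{eq:theta-conv} the paper simply asserts unbiasedness ``immediately'', but $\bE[\text{num}]=0$ does not by itself give $\bE[\text{num}/\text{denom}]=0$ when the denominator is random and correlated with the numerator. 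If you want to press on Part~1, you would need a genuinely different argument than zero mean of the numerator (e.g.\ a symmetry argument or an explicit computation exploiting the joint law of $(\int u_k\,dw_k,\int u_k^2\,dt)$); the paper does not supply one.
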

\begin{proof}
Using the dynamics of $u_k$ given in \eqref{eq:OU-Fourier}, the estimator $\theta_T^N$ can be conveniently  represented as follows
\begin{equation}\label{eq:theta-conv}
\widehat{\theta}_T^N = \theta - \frac{\sigma\sum_{k=1}^N \lambda_k^{2\beta+\gamma}\int_0^Tu_k\d w_k}{ \sum_{k=1}^N \lambda_k^{4\beta+2\gamma} \int_0^Tu_k^2\d t}.
\end{equation}
From this, we conclude immediately that $\widehat \theta_T^N$ is an unbiased estimator of $\theta$.
For the second part of the theorem, we refer the reader to the original paper \citet{HuebnerRozovskii} or the survey paper \citet{Lototsky2009Survey}.

For sake of completeness we present here the proof for large time asymptotics, Part 3 of the theorem.
We start with consistency.
Taking into account the representation \eqref{eq:theta-conv}, it is enough to prove that, for any $k\in\bN$,
\begin{equation}\label{eq:duOverdW}
\lim_{T\to\infty}\frac{\int_0^Tu_kdw_k}{\int_0^Tu_k^2dt}=0 \quad\textrm{a.s.}.
\end{equation}

Following the idea from \citet[Chapter 17]{LiptserShiryayevBook1978}, we consider the following stopping times $\tau_s:=\inf\{t: \int_0^tu_k^2(r)dr>s\}$,  for $s\geq 0$.
We claim that the stochastic process
$$
v_k(s)=\int_0^{\tau_s}u_kdw_k,
$$
is a Wiener process with respect to the filtration $\set{\mathcal{G}_s:=\mathcal{F}_{\tau_s}, \ s\geq 0}$.
Indeed, by It\^o's formula, we have
$$
de^{i\lambda x(t)}=i\lambda e^{i\lambda x(t)}u_kdw_k-\frac{\lambda^2}{2}e^{i\lambda x(t)}u_k^2dt,
$$
where $x(t)=\int_0^tu_kdw_k$. Integrating from $\tau_{s_1}$ to $\tau_{s_2}$, we obtain
\begin{align}\label{eq:eetaus}
e^{i\lambda v_k(s_2)}=e^{i\lambda v_k(s_1)}+i\lambda \int_{\tau_{s_1}}^{\tau_{s_2}}e^{i\lambda x(t)}u_kdw_k-\frac{\lambda^2}{2}\int_{\tau_{s_1}}^{\tau_{s_2}}e^{i\lambda x(t)}u_k^2dt.
\end{align}
By making the substitution $t=\tau_s$ in the above Lebesgue integral, we get
\begin{align*}
\int_{\tau_{s_1}}^{\tau_{s_2}}e^{i\lambda x(t)}u_k^2dt=\int_{s_1}^{s_2}e^{i\lambda x(\tau_s)}u_k^2(\tau_s)d\tau_s=\int_{s_1}^{s_2}e^{i\lambda v_k(s)}ds.
\end{align*}
Using this, and the fact that
$$
\bE\left[\int_{\tau_{s_1}}^{\tau_{s_2}}e^{i\lambda x(t)}u_kdw_k\bigg|\mathcal{G}_{s_1}\right]=0,
$$
equation \eqref{eq:eetaus} yields the following integral equation
$$
V(s_2)=1-\frac{\lambda^2}{2}\int_{s_1}^{s_2}V(s)ds,
$$
where $V(s):=\bE\left[e^{i\lambda (v_k(s)-v_k(s_1))}\big|\mathcal{G}_{s_1}\right]$.
Solving this integral equation we obtain that
$$
V(s_2)=\bE\left[e^{i\lambda (v_k(s_2)-v_k(s_1))}\big|\mathcal{G}_{s_1}\right]=e^{-\lambda^2 (s_2-s_1)/2},
$$
which means $v_k(t)$ is a Gaussian martingale and $\bE\left[(v_k(s_2)-v_k(s_1))^2\big|\mathcal{G}_{s_1}\right]=s_2-s_1$.
Consequently, from normality, it follows that  $\bE(v_k(s_2)-v_k(s_1))^4=3(s_2-s_1)^2$, and hence by Kolmogorov Criterion the process admits a continuous version.
Therefore,  $v_k(t)$ is a Wiener process.

By the definition of $\tau_s$, we notice that $\tau_s$ is non-decreasing with respect to $s$ a.s., and thus  $\lim_{s\to\infty}\tau_s$ exists (possible equal to infinity) with probability one.
On the other hand, since $u_k(t)$ is a continuous function on $\bR_+$, we get that $\int_0^{T}u_k^2dt<+\infty$ a.s. for any finite $T\in(0,\infty)$. This implies that $\tau_s$ cannot be bounded as  $s\to\infty$. Therefore, we conclude that  $\lim_{s\to\infty}\tau_s=+\infty$ a.s.

Finally, by the law of iterated logarithm,
$$\lim_{T\to\infty}\frac{\int_0^Tu_kdw_k}{\int_0^Tu_k^2dt}=\underset{s\to\infty}{\lim}\frac{\int_0^{\tau_s}u_kdw_k}{\int_0^{\tau_s}u_k^2dt}= \underset{s\to\infty}{\lim}\frac{v_k(s)}{s}=0 \quad\textrm{a.s}.
$$
Thus, \eqref{eq:ConstT} is established.

By direct evaluations, one can check that $u_k$ satisfies Conditions~\eqref{eq:CondRP}, and thus, by Theorem~\ref{th:LLN} we have that $u_k$ is ergodic with invariant density
$$
f_k(x)=\lambda_k^{\beta+\gamma}\sqrt{\frac{\theta}{\pi\sigma^2}}\exp\left(-\theta\sigma^{-2}\lambda_k^{2\beta+2\gamma} x^2\right),
$$
and hence
\begin{align}\label{eq:5}
\lim\limits_{T\to\infty}\frac{1}{T}\int_0^T\lambda_k^{4\beta+2\gamma}u_k^2dt =\frac{\sigma^2}{2\theta}\lambda_k^{2\beta}\quad \textrm{a.s}.
\end{align}
Summing up \eqref{eq:5} from 1 to $N$,  we obtain
\begin{align}\label{eq:6}
\lim_{T\to\infty}\frac{1}{T}\sum_{k=1}^N\lambda_k^{4\beta+2\gamma}\int_0^Tu_k^2dt = \sigma^2M/(2\theta)\quad\textrm{a.s.}
\end{align}
By Theorem~\ref{th:CLT} we also have
\begin{align}\label{eq:3}
X_k^T:=\frac{1}{\sqrt{T}}\int_0^T\lambda_k^{2\beta+\gamma}u_kdw_k \overset{d}\longrightarrow \mathcal{N}\left(0,\frac{\sigma^2}{2\theta}\lambda_k^{2\beta}\right).
\end{align}
Let $Y_T:=\sum_{k=1}^NX_k^T$, and let $\varphi_{X_k}^T(v)$ and $\varphi_{Y}^T(v)$ denote the characteristic functions of $X_k^T$ and $Y_T$ respectively.
Since  $X_k^T, \ k\in\bN$, are independent, we have that
\begin{align}\label{eq:4}
\varphi_{Y}^T=\prod_{k=1}^N\varphi_{X_k}^T.
\end{align}
Consequently, by \eqref{eq:3} and L\'evy's Continuity Theorem (cf. \citet[Chapter 19]{JacodProtterBook2003}), we deduce that
$$
\lim_{T\to\infty}\varphi_{X_k}^T=\exp\left(-\frac{v^2\sigma^2}{4\theta}\lambda_k^{2\beta}\right),
$$
which combined with $(\ref{eq:4})$ gives that
$$
\lim_{T\to\infty}\varphi_{Y}^T=\exp\left(-\frac{v^2\sigma^2}{4\theta}M\right),
$$
Using L\'evy's Continuity Theorem one more time,  we have
\begin{align}\label{eq:7}
Y_T=\frac{1}{\sqrt{T}}\sum_{k=1}^N\lambda_k^{2\beta+\gamma}\int_0^Tu_kdw_k \overset{d}\longrightarrow \mathcal{N}(0,\sigma^2M/(2\theta)).
\end{align}
Notice that $\sqrt{T}\left(\widehat{\theta}_T^N-\theta\right)=Y_T/Z_T$, where $Z_T:=\frac{1}{T}\sum_{k=1}^N\lambda_k^{4\beta+2\gamma}\int_0^Tu_k^2dt$.
Finally, by \eqref{eq:7}, \eqref{eq:6} and Slutsky's Theorem, the asymptotic normality \eqref{eq:AsymNormT} follows, and this concludes the proof.
\end{proof}

\section{Hypothesis Testing}\label{sec:HypTest}
In this section we will formulate the problem, and present the main findings of our study. 

\subsection{Formulation of the problem}\label{sec:formulation}
We will consider the problem of hypothesis testing for the drift coefficient $\theta$. In this work we focus our study on the case of a simple hypothesis in the continuous time observation framework: we will assume that the parameter $\theta$ can take only two values $\theta_0,\theta_1$, and that the observable is $U_T^N$ - the trajectory on $[0,T]$ of the projection of the solution on the first $N$ Fourier modes. Hence, we take the null and the alternative hypothesis as follows
\begin{align*}
\mathscr{H}_0 & :\quad \theta=\theta_0, \\
\mathscr{H}_1 &:\quad \theta=\theta_1.
\end{align*}
Without loss of generality, towards this end,  we will assume that $\theta_1>\theta_0$, and $\sigma>0$. Also, for simplicity, we will assume that $U_0=0$\footnote{Generally speaking, after appropriate modifications,  all results remain true if $U(0)\in L^2(G)$. However, for the deductions in Section~\ref{sec:AsympMethodN} we need additional assumption $U(0)\in H^{\beta+\gamma}(G)$.}.
Throughout, we fix a significance level $\alpha\in(0,1)$.
Suppose that $R\in\cB(C([0,T];\bR^N))$ is a rejection region for the test, i.e. if  $U_T^N\in R$ we reject the null and accept the alternative.
Naturally, we seek rejection regions with Type~I error $\bP^{N,T}_{\theta_0}(R)$ smaller than the significance level $\alpha$, and thus we consider the following class of rejection regions
$$
\mathcal{K}_{\alpha}:=\left\{R\in\cB(C([0,T];\bR^N)): \bP^{N,T}_{\theta_0}(R)\leq\alpha\right\}.
$$
The probability $\bP^{N,T}_{\theta_1}(R)$ of the true decision under $\mathscr{H}_1$  is called the \textit{power of the test}, and the goal is to find the \textit{most powerful rejection region} $R^*\in\cK_\alpha$ for the observation $U_T^N$.
Mathematically reciting, we give the following definition.
\begin{definition}\label{def:Rstar}
We say that a rejection region $R^*\in\cK_\alpha$ is the most powerful in the class $\mathcal{K}_{\alpha}$ if
$$
\bP^{N,T}_{\theta_1}(R)\le\bP^{N,T}_{\theta_1}(R^*),\qquad \textrm{ for all } R\in\mathcal{K}_{\alpha}.
$$
\end{definition}
As one may expect, once we have an MLE for the parameter of interest $\theta$, as well as its consistency, a Neyman-Pearson type lemma should give, at least a theoretical, answer to the hypothesis testing problem.

\begin{theorem}[Neyman-Pearson]\label{th:NPLemma}
Let $c_\alpha$ be a real number such that
\begin{align}\label{eq:c-alpha}
\bP^{N,T}_{\theta_0}(L(\theta_0,\theta_1,U_T^N)\ge c_{\alpha})=\alpha.
\end{align}
Then,
\begin{align}
R^*:=\{U_T^N: L(\theta_0,\theta_1,U_T^N)\ge c_{\alpha}\},
\end{align}
is the most powerful rejection region in the class $\mathcal{K}_{\alpha}$.
\end{theorem}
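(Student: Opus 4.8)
The plan is to run the classical Neyman--Pearson argument, exploiting the equivalence of the measures $\bP^{N,T}_{\theta_0}$ and $\bP^{N,T}_{\theta_1}$ recorded in Section~\ref{sec:MathSetting} and the fact that $L(\theta_0,\theta_1,U^N_T)$ is \emph{exactly} the Radon--Nikodym density $d\bP^{N,T}_{\theta_1}/d\bP^{N,T}_{\theta_0}$, so that $\int_A L\,d\bP^{N,T}_{\theta_0}=\bP^{N,T}_{\theta_1}(A)$ for every Borel set $A$.

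First I would dispose of two preliminary points. Since $L>0$ $\bP^{N,T}_{\theta_0}$-a.s.\ and $\alpha<1$, the threshold in \eqref{eq:c-alpha} must satisfy $c_\alpha>0$ (otherwise $\{L\ge c_\alpha\}$ would have probability one). Also, a number $c_\alpha$ as in \eqref{eq:c-alpha} does exist: under $\bP^{N,T}_{\theta_0}$ the statistic $L$ is the exponential of a non-degenerate quadratic functional of the Gaussian Fourier modes $u_k$ (use It\^o's formula to write $\int_0^T u_k\,du_k$ in terms of $u_k(T)^2$ and $\int_0^T u_k^2\,dt$), hence its distribution function is continuous, and so $R^*=\{U^N_T:L\ge c_\alpha\}$ is well defined with $\bP^{N,T}_{\theta_0}(R^*)=\alpha$; in particular $R^*\in\cK_\alpha$.

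The core of the proof is the pointwise inequality $(\1_{R^*}-\1_R)(L-c_\alpha)\ge 0$, valid for any $R\in\cK_\alpha$: on $R^*$ we have $\1_{R^*}-\1_R\ge0$ and $L\ge c_\alpha$, while off $R^*$ we have $\1_{R^*}-\1_R\le0$ and $L<c_\alpha$, so the product is nonnegative in both cases. Integrating against $\bP^{N,T}_{\theta_0}$ and using $\int_A L\,d\bP^{N,T}_{\theta_0}=\bP^{N,T}_{\theta_1}(A)$ yields
\[
\bP^{N,T}_{\theta_1}(R^*)-\bP^{N,T}_{\theta_1}(R)\ \ge\ c_\alpha\bigl(\bP^{N,T}_{\theta_0}(R^*)-\bP^{N,T}_{\theta_0}(R)\bigr)\ =\ c_\alpha\bigl(\alpha-\bP^{N,T}_{\theta_0}(R)\bigr)\ \ge\ 0,
\]
the last step using $c_\alpha>0$ and $\bP^{N,T}_{\theta_0}(R)\le\alpha$. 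Hence $\bP^{N,T}_{\theta_1}(R)\le\bP^{N,T}_{\theta_1}(R^*)$ for every $R\in\cK_\alpha$, which is precisely the optimality in Definition~\ref{def:Rstar}.

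I do not expect a genuine obstacle here — the argument is purely measure-theoretic once one has the explicit likelihood ratio. The only point deserving care is the existence of $c_\alpha$ realizing the exact level $\alpha$, i.e.\ the atomlessness of the law of $L$ under $\bP^{N,T}_{\theta_0}$; I would note this explicitly (it also follows a posteriori from the cumulant generating function computed in Section~\ref{ssec:CumFuncT}). If one simply takes $c_\alpha$ as given in the hypothesis, the optimality assertion above is entirely self-contained.
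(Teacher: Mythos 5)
Your proof is correct and follows essentially the same Neyman--Pearson argument as the paper: the pointwise inequality $(\1_{R^*}-\1_R)(L-c_\alpha)\ge 0$, integration under $\bP^{N,T}_{\theta_0}$, and the change-of-measure identity $\bE_{\theta_0}[L\1_A]=\bP^{N,T}_{\theta_1}(A)$. Your additional remarks on $c_\alpha>0$ and the atomlessness of the law of $L$ make explicit two points the paper treats tersely, but they do not change the route.
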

\begin{proof}
First note that such $c_\alpha$ exists, due to continuity of the distribution of the Log-Likelihood Ratio.
Let us assume that $R$ is  a rejection region with $\bP^{N,T}_{\theta_0}(R)\le\alpha$.
If $U_T^N\in R^*$, then $\1_{R^*}-\1_{R}\ge0$, and $L(\theta_0,\theta_1,U_T^N)-c_{\alpha}\ge0$.
Otherwise, if $U_T^N\notin R^*$, then $\1_{R^*} - \1_{R}\le0$, and $L(\theta_0,\theta_1,U_T^N)-c_{\alpha}<0$.
Thus, $\left(\1_{R^*}-\1_{R}\right)\left(L(\theta_0,\theta_1,U_T^N)-c_{\alpha}\right)\geq0$, and therefore, we have that
$$
\bE_{\theta_0}\left[\left(\1_{R^*}-\1_{R}\right)\left(L(\theta_0,\theta_1,U_T^N)-c_{\alpha}\right)\right]\ge0.
$$
This, combined with \eqref{eq:RadonNikodymUn} and \eqref{eq:c-alpha}, implies the following
\begin{align*}
\bP^{N,T}_{\theta_1}(R^*)-\bP^{N,T}_{\theta_1}(R)=&\bE_{\theta_1}\left(\1_{R^*}-\1_{R}\right)\\
=&\bE_{\theta_0}L(\theta_0,\theta_1,U_T^N)\left(\1_{R^*}-\1_{R}\right)\\
\ge&c_{\alpha}\bE_{\theta_0}\left(\1_{R^*}-\1_{R}\right)\\
=&c_{\alpha}\left(\bP^{N,T}_{\theta_0}(R^*)-\bP^{N,T}_{\theta_0}(R)\right)\\
=&c_{\alpha}\left(\alpha-\bP^{N,T}_{\theta_0}(R)\right)\ge0.
\end{align*}
This finishes the proof.
\end{proof}
Theorem~\ref{th:NPLemma} gives a complete theoretical answer to the hypothesis testing problem, however, generally speaking it is not possible to give an explicit formula for the constant $c_\alpha$. Given that MLE estimator $\widehat{\theta}^N_T$ is consistent and asymptotically normal, for both, large time asymptotics $T\to\infty$, and large space sample size $N\to\infty$, it is reasonable to take a large-sample or asymptotic test approach.
In what follows, we will study each case separately, starting with large time asymptotics, while fixing the number of Fourier modes $N$, and then in Section~\ref{sec:AsympMethodN} we will consider the case when the number of Fourier modes increases, while time horizon is fixed. Of course, eventually one can consider the case when both $T$ and $N$ converge to infinity, however, we will postpone this approach to further studies.

\subsection{Statement of main results}\label{sec:mainResults}
In this section we announce the main results along with their interpretations. The detailed proofs are deferred to subsequent sections.

The main goal is to establish the proper classes of tests, and consequently to find in these classes likelihood ratio type tests that are the most powerful tests in the asymptotic sense - the notion defined rigourously herein.
In what follows, we will mostly deal with families of rejection regions indexed either by time $T\in\bR_+$, while number of Fourier modes $N\in\bN$ is fixed, or by $N\in\bN$ while time horizon $T$ is fixed. If  no confusions arise, with slight abuse of notations, we will simply write $R_T$ or $(R_T)$ instead of $(R_T)_{T\in\bR_+}$, and since $N$ is fixed in this case, we omit it in our writings. Respectively, for the asymptotic regime $N\to\infty$, while $T$ is fixed, we write $R_N$ or $(R_N)$  instead of $(R_N)_{N\in\bN}$.

Next, we introduce the main concept of this paper:

\begin{definition}\label{def:mostpoweful}
For a fix $N\in\bN$, let $\cK$ be a generic set of rejection regions
$$
\mathcal{K} \subset \left\{(R_T)_{T\in\bR_+} \, : \, R_T\in\cB(C([0,T]; \bR^N)) \right\}.
$$
We say that the rejection region $(R_T^*)_{T\in\bR_+}\in\mathcal{K}$ is \textit{asymptotically the most powerful}, in the class $\mathcal{K}$, as $T\to\infty$,  if
\begin{align}\label{eq:asym-def}
\underset{T\to\infty}{\liminf} \frac{1-\bP^{N,T}_{\theta_1}(R_T)}{1-\bP^{N,T}_{\theta_1}(R_{T}^*)}\ge1,\qquad \textrm{ for all } (R_T)\in\mathcal{K}.
\end{align}

Analogously, for a fixed $T>0$, and with
$$
\widetilde{\cK} \subset \set{(R_N)_{N\in\bN} \, : \, R_N\in\cB(C([0,T]; \bR^N))},
$$
the rejection region $(\widetilde{R}_N)\in\widetilde{\mathcal{K}}$ is \textit{asymptotically the most powerful},  in the class $\widetilde{\mathcal{K}}$, as $N\to\infty$, if
\begin{align}\label{eq:asym-def-N}
\underset{N\to\infty}{\liminf} \frac{1-\bP^{N,T}_{\theta_1}(R_N)}{1-\bP^{N,T}_{\theta_1}(\widetilde{R}_N)}\ge1,\qquad \textrm{ for all } (R_N)\in \widetilde{\mathcal{K}}.
\end{align}
\end{definition}

The classes $\cK, \ \widetilde\cK$ will be refined below; naturally, we will consider classes of rejection regions that have asymptotically a Type~I error close to the significance level $\alpha$,  similar to the definition of $\cK_\alpha$ from Section~\ref{sec:formulation}. On the other hand, by the same token, one would like to consider rejection regions such that $\bP^{N,T}_{\theta_1}(R_{T})\underset{T\to\infty}\longrightarrow1$.
The concept of asymptotically most powerful test is intended to depict those tests, within the considered class of tests, that have the fastest rate of convergence of their powers to one. Condition \eqref{eq:asym-def} from Definition~\ref{def:mostpoweful}, actually guarantees that $\bP^{N,T}_{\theta_1}(R_{T}^*)$ has the fastest speed of convergence to 1, as $T\to\infty$, among all the elements in class $\mathcal{K}$; respectively \eqref{eq:asym-def-N} implies that $\bP^{N,T}_{\theta_1}(R_{N}^*)$ has the fastest rate of convergence to 1, as $N\to\infty$, among all the elements in class $\widetilde{\cK}$.

Next, we will present the main findings for each asymptotic regime separately, starting with large times.

\subsubsection{Large Times Asymptotics}\label{sec:mainResultsTime}
Throughout this section, we assume that the number of observed Fourier modes $N\in\bN$ is fixed.  \\
We begin by considering the following (asymptotic) class of rejection regions
\begin{align}\label{eq:asym-class}
\mathcal{K}_{\alpha}^*:=\left\{(R_T)_{T\in\bR_+} \, : \, R_T\in\cB(C([0,T]; \bR^N), \  \limsup_{T\to\infty}\bP^{N,T}_{\theta_0}(R_T)\le\alpha\right\}.
\end{align}
The set $\mathcal{K}_{\alpha}^*$ should be seen as a reasonable  asymptotic version of set $\cK_\alpha$.
It consists of test with Type~I error smaller than $\alpha$, in the limit sense.
Apparently, as next result shows, the class $\mathcal{K}_{\alpha}^*$ is `too large' to have likelihood ratio type tests as asymptotically the most powerful.

\begin{theorem}\label{Th:NoMostAsymKstar}
For any positive function of time $c_\alpha(T)$, the rejection region of the form
\begin{align}\label{eq:regRegT4}
R_{T}:=\left\{U_T^N: L(\theta_0,\theta_1,U_T^N)\ge c_{\alpha}(T)\right\},
\end{align}
can not be asymptotically the most powerful in the class $\mathcal{K}_{\alpha}^*$.
\end{theorem}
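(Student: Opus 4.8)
The plan is to exhibit, for a given likelihood-ratio family $(R_T)$ of the form \eqref{eq:regRegT4}, a competing family $(\widetilde R_T)\in\cK_\alpha^*$ whose Type~II error decays strictly faster, thereby violating \eqref{eq:asym-def}. The natural source of such a competitor is the observation that the log-likelihood ratio $\log L(\theta_0,\theta_1,U_T^N)$, under $\mathscr H_1$, behaves (to leading order) like a drift term proportional to $T$ plus Gaussian fluctuations of order $\sqrt T$; the threshold $c_\alpha(T)$ is pinned down only by the constraint that under $\mathscr H_0$ the Type~I error stay $\le\alpha$ in the limit. I would first use the explicit formula \eqref{eq:RadonNikodymUn}, together with the dynamics \eqref{eq:OU-Fourier} and the ergodic limits \eqref{eq:5}--\eqref{eq:7} established in the proof of Theorem~2.1, to pin down the almost-sure asymptotics of $\frac1T\log L(\theta_0,\theta_1,U_T^N)$ under each hypothesis: under $\bP_{\theta_i}$ it converges a.s. to an explicit negative constant $\ell_i$, with $\ell_1>\ell_0$ (both negative), and I would identify the Gaussian CLT correction around this deterministic rate.

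Next I would analyze what the constraint $\limsup_{T\to\infty}\bP^{N,T}_{\theta_0}(R_T)\le\alpha$ forces on $c_\alpha(T)$. Since under $\mathscr H_0$ we have $\frac1T\log L\to\ell_0<0$ a.s., the event $\{\log L\ge c_\alpha(T)\}$ can have non-vanishing probability only if $c_\alpha(T)$ does not grow faster than linearly with the "right" slope; more precisely, writing $c_\alpha(T)=\ell_0 T + a(T)$, the Type~I constraint bounds how negative $a(T)$ is allowed to be — it cannot be pushed down to order $\ell_1 T$. The key point is then to show that with any admissible threshold, the Type~II error $1-\bP^{N,T}_{\theta_1}(R_T)=\bP^{N,T}_{\theta_1}(\log L< c_\alpha(T))$ is bounded below by something that decays no faster than a fixed exponential rate $e^{-\kappa T}$ with $\kappa$ determined by $\ell_0,\ell_1$ and the variance — essentially a large-deviations lower bound, since $\log L$ under $\mathscr H_1$ must cross a barrier sitting a distance of order $(\ell_1-\ell_0)T$ below its typical value $\ell_1 T$. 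By contrast, I would construct $\widetilde R_T$ as a \emph{different} functional of $U_T^N$ — for instance a rejection region based directly on $\widehat\theta_T^N$, say $\widetilde R_T=\{\widehat\theta_T^N \ge \theta_0 + b(T)\}$ with $b(T)\to 0$ slowly (e.g. $b(T)=T^{-1/4}$) — which still lies in $\cK_\alpha^*$ because $\sqrt T(\widehat\theta_T^N-\theta_0)$ is asymptotically normal under $\mathscr H_0$ so its Type~I error tends to $0\le\alpha$, yet under $\mathscr H_1$ the event fails only when $\widehat\theta_T^N<\theta_0+b(T)$, i.e. when $\sqrt T(\widehat\theta_T^N-\theta_1)< -\sqrt T(\theta_1-\theta_0)+\sqrt T\, b(T)$, an event of probability decaying faster than any fixed exponential — in fact essentially like $\exp(-cT)$ with an arbitrarily large constant as we are free to choose $b(T)$, and certainly faster than the barrier-crossing rate forced on the likelihood-ratio test by the Type~I constraint. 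Taking the ratio in \eqref{eq:asym-def} with $(\widetilde R_T)$ in the numerator then gives $\liminf = 0 < 1$, contradicting asymptotic optimality of $(R_T)$.

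The main obstacle I anticipate is making the large-deviations lower bound for the Type~II error of the likelihood-ratio test rigorous and uniform over all admissible choices of $c_\alpha(T)$: one must rule out the possibility that some cleverly chosen sub-linear correction $a(T)$ simultaneously keeps the Type~I error $\le\alpha$ and makes the Type~II error decay faster than the competitor's. This is exactly where the sharp large-deviation results for $\log L$ advertised in the introduction (the cumulant generating function computation of Section~\ref{ssec:CumFuncT}) would be used — the precise exponential rate of $\bP_{\theta_0}(\log L\ge c)$ as a function of $c$, and correspondingly of $\bP_{\theta_1}(\log L< c)$, must be matched so that driving the Type~I error to its ceiling $\alpha$ caps how small the Type~II error can be. The cleanest route is: show that any $(R_T)$ of the form \eqref{eq:regRegT4} that is in $\cK_\alpha^*$ must have $c_\alpha(T)/T \to \ell_0$ (so that the threshold sits at the $\mathscr H_0$-rate), deduce from the $\mathscr H_1$ large-deviation rate that $1-\bP^{N,T}_{\theta_1}(R_T)$ decays no faster than $e^{-\eta T}$ for some fixed $\eta>0$, and then note the $\widehat\theta$-based competitor beats this. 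Everything else — the ergodic and CLT inputs, the admissibility of $\widetilde R_T$ — is routine given Theorem~2.1.
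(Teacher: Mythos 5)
Your proposed competitor cannot work, and the reason is already in the paper: Theorem~\ref{th:NPLemma}. At every finite $T$, the likelihood-ratio test $R_T=\{L\ge c_\alpha(T)\}$ is the \emph{most} powerful rejection region among all tests with Type~I error $\le \bP^{N,T}_{\theta_0}(R_T)$. Your $\widetilde R_T=\{\widehat\theta^N_T\ge\theta_0+b(T)\}$ with $b(T)\sqrt T\to\infty$ has $\bP^{N,T}_{\theta_0}(\widetilde R_T)\to0$, so for all large $T$ it has \emph{smaller} Type~I error than the likelihood-ratio test it is supposed to beat; by Neyman--Pearson its Type~II error is therefore \emph{at least as large}, so the ratio in \eqref{eq:asym-def} is $\ge 1$ and nothing is contradicted. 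Your quantitative claim is also off: $\bP^{N,T}_{\theta_1}(\widehat\theta^N_T<\theta_0+b(T))$ does not decay ``faster than any fixed exponential,'' nor can you make the rate arbitrarily large by choosing $b(T)$ — with $b(T)\to0$ the event is essentially $\{\widehat\theta^N_T<\theta_0\}$, whose large-deviation rate under $\mathscr H_1$ is a fixed finite constant, and one which (again by Neyman--Pearson) cannot exceed the rate of the likelihood-ratio test at comparable size. Generally, any competitor that ``wastes'' the Type~I budget by letting its level tend to $0$ is doomed.

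The correct route, and the one the paper takes, is to beat a likelihood-ratio test with \emph{another} likelihood-ratio test that spends the Type~I budget slightly more aggressively. Writing $c_\alpha(T)=\exp\bigl(-\tfrac{(\theta_1-\theta_0)^2}{4\theta_0}MT-\tfrac{\theta_1^2-\theta_0^2}{2\theta_0}\sqrt{\tfrac{MT}{2\theta_0}}q_\alpha+\beta_c(T)\bigr)$, the paper first shows that membership in $\cK_\alpha^*$ and asymptotic optimality force $\beta_c(T)=o(\sqrt T)$ with $\limsup\beta_c(T)<0$; then, given any such $\beta_c$, Theorem~\ref{th:MorePowerTests} — whose heart is the sharp large-deviation expansion of Lemma~\ref{lemma:BT-AsymEst}, not merely the Chernoff-level rate of Proposition~\ref{prop:LargDev1} — produces a strictly more powerful likelihood-ratio test in $\cK_\alpha^*$ by pushing the threshold down further (e.g.\ $\beta_c^+(T)=-T^{1/4}|\beta_c(T)|^{1/2}$), which drives the power ratio to $0$. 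Both tests have the \emph{same} leading exponential rate $\tfrac{(\theta_1-\theta_0)^2}{4\theta_0}M$; the separation happens entirely in the sub-exponential correction, which is exactly the resolution your Chernoff-level argument (``decays no faster than $e^{-\eta T}$'') cannot see. So your first paragraph correctly identifies that one must pin down the admissible thresholds and invoke sharp large deviations, but the competitor you actually exhibit, and the claimed rate comparison, are wrong; without replacing $\widetilde R_T$ by a modified likelihood-ratio threshold and carrying out the $o(T)$-level comparison, the argument does not close.
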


Towards archiving our main goal, we refine the class $\cK_\alpha$, by considering a slightly smaller class of test $\cK_\alpha^\sharp(\delta)$, defined as follows 
\begin{align}\label{def:NewAsymClass}
\mathcal{K}_{\alpha}^\sharp(\delta):=\left\{(R_T): \limsup_{T\to\infty}\left(\bP^{N,T}_{\theta_0}(R_T)-\alpha\right)\sqrt{T}\le\alpha_1(\delta)\right\},
\quad \delta\in\bR,
\end{align}
where
\begin{equation}\label{eq:alpha1explicit}
\alpha_1(\delta) = (2\pi)^{-1/2}e^{-q_{\alpha}^2/2}\delta + \frac{e^{-q_{\alpha}^2/2}}{2\sqrt{\pi M\theta_0}}\left(\frac{(\theta_1-\theta_0)N}{2(\theta_1+\theta_0)}+1-q_{\alpha}^2\right),
\end{equation}
with $q_\alpha$ denoting the $\alpha$ quantile of a standard Gaussian distribution.

Respectively, for a fixed parameter $\delta\in\bR$, we define the  following family of rejection regions
\begin{align}\label{eq:RTDelta}
R_T^\delta=\left\{U_T^N: L(\theta_0,\theta_1,U_T^N)\ge c^\delta_{\alpha}(T)\right\},
\end{align}
where
\begin{align*}
c^\delta_{\alpha}(T)=\exp\left(-\frac{(\theta_1-\theta_0)^2}{4\theta_0}MT-\frac{\theta_1^2-\theta_0^2} {2\theta_0}\sqrt{\frac{MT}{2\theta_0}}q_{\alpha}-\frac{\delta(\theta_1^2-\theta_0^2)\sqrt{M}}{\sqrt{8\theta_0^3}}\right).
\end{align*}

Note that $R_T^\delta$  indeed is a likelihood ratio type test.
Also note that the class $\mathcal{K}_{\alpha}^\sharp(\delta)$ is  a subset of $\cK_\alpha^*$.
We are still collecting in $\mathcal{K}_{\alpha}^\sharp(\delta)$ tests with Type~I error converging to $\alpha$, but with a rate of convergence at least $\alpha_1T^{-1/2}$, where $\alpha_1$ is a constant independent of $T$. Saying differently, we are accepting Type~I errors which are larger than the significance level $\alpha$ by no more that $\alpha_1(\delta)T^{-1/2} + o(T^{-1/2})$.
The threshold $c_\alpha^\delta(T)$ and the constant $\alpha_1(\delta)$ will be derived naturally over the course of proving the main results, and their particular forms are less important at this point.
We refer the reader to Section~\ref{sec:proofs} for a thorough discussion of the ideas and the methods used to derive these classes of tests. 

\bigskip
Now we are in the position to present one of the main results of this paper.

\begin{theorem}\label{th:MainResult1}
For any $\delta\in\bR$, the rejection region $(R_T^\delta)$ is asymptotically the most powerful in the class $\mathcal{K}_{\alpha}^\sharp(\delta)$.
\end{theorem}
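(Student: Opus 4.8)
The strategy is to reduce everything to the asymptotics of a single Gaussian quantity. Recall that $\sqrt{T}(\widehat\theta_T^N-\theta_0)$ is, under $\mathscr H_0$, asymptotically $\mathcal N(0,2\theta_0/M)$, and likewise under $\mathscr H_1$ that $\sqrt{T}(\widehat\theta_T^N-\theta_1)$ is asymptotically $\mathcal N(0,2\theta_1/M)$. I would first observe that the likelihood ratio test \eqref{eq:RTDelta}, after taking logarithms and using \eqref{eq:RadonNikodymUn}, is equivalent to a test of the form $\{\,\widehat\theta_T^N \le b^\delta_\alpha(T)\,\}$ for an explicit threshold $b^\delta_\alpha(T)$ — indeed $\log L(\theta_0,\theta_1,U^N_T)$ is an affine function of $\widehat\theta_T^N$ once one recognizes the quadratic form $\sum_k\lambda_k^{4\beta+2\gamma}\int_0^T u_k^2\,dt$ as the (random) normalizing factor. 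The threshold $c^\delta_\alpha(T)$ was reverse-engineered precisely so that $b^\delta_\alpha(T) = \theta_0 - q_\alpha\sqrt{2\theta_0/(MT)} + (\text{correction of order }T^{-1})$, and by the sharp large-deviation/Edgeworth-type expansion developed in Section~\ref{ssec:CumFuncT} one gets the refined statement $\bP^{N,T}_{\theta_0}(R_T^\delta) = \alpha + \alpha_1(\delta)T^{-1/2} + o(T^{-1/2})$, so that $(R_T^\delta)\in\mathcal K_\alpha^\sharp(\delta)$ with the constraint in \eqref{def:NewAsymClass} attained with equality. This computation is where the explicit form of $\alpha_1(\delta)$ in \eqref{eq:alpha1explicit} comes from, and it is the technical heart of the argument.

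Next I would compute the Type~II error of $(R_T^\delta)$, again via the sharp large-deviation estimate but now under $\bP^{N,T}_{\theta_1}$. Writing $1-\bP^{N,T}_{\theta_1}(R_T^\delta) = \bP^{N,T}_{\theta_1}(\widehat\theta_T^N > b^\delta_\alpha(T))$, and since $b^\delta_\alpha(T)\to\theta_0<\theta_1$, this is a genuine (large) deviation event for $\widehat\theta_T^N$ under $\mathscr H_1$; the sharp asymptotics from Section~\ref{ssec:CumFuncT} give $1-\bP^{N,T}_{\theta_1}(R_T^\delta) = C^*(\delta)\,T^{-1/2}\,e^{-I^*T}(1+o(1))$ for an explicit rate $I^* = I^*(\theta_0,\theta_1,M)$ and prefactor $C^*(\delta)$, with the $\delta$-dependence entering only through the prefactor (it tracks the $O(1/\sqrt T)$ shift in the threshold). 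The key point is that the exponential rate $I^*$ depends only on the limiting threshold $\theta_0$ and hence is \emph{the same} for every competitor in the class.

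Finally, for the comparison: let $(R_T)\in\mathcal K_\alpha^\sharp(\delta)$ be arbitrary. By the Neyman–Pearson Lemma (Theorem~\ref{th:NPLemma}), among all regions with Type~I error $\le \bP^{N,T}_{\theta_0}(R_T)$, the likelihood ratio region with that exact level is most powerful; comparing $R_T$ to the LR region $\widetilde R_T$ with $\bP^{N,T}_{\theta_0}(\widetilde R_T)=\bP^{N,T}_{\theta_0}(R_T)$ gives $1-\bP^{N,T}_{\theta_1}(R_T)\ge 1-\bP^{N,T}_{\theta_1}(\widetilde R_T)$. Since $R_T\in\mathcal K_\alpha^\sharp(\delta)$, its level satisfies $\bP^{N,T}_{\theta_0}(R_T)\le \alpha+\alpha_1(\delta)T^{-1/2}+o(T^{-1/2})$, i.e.\ $\widetilde R_T$ has threshold $\widetilde b(T)\le b^\delta_\alpha(T)+o(T^{-1/2})$; hence $\{\widehat\theta_T^N>\widetilde b(T)\}\supseteq\{\widehat\theta_T^N>b^\delta_\alpha(T)+o(T^{-1/2})\}$, so $1-\bP^{N,T}_{\theta_1}(\widetilde R_T)\ge \bP^{N,T}_{\theta_1}(\widehat\theta_T^N>b^\delta_\alpha(T)+o(T^{-1/2}))$. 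Dividing by $1-\bP^{N,T}_{\theta_1}(R_T^\delta)$ and invoking the sharp asymptotics: both numerator and denominator have the same exponential rate $e^{-I^*T}$, and the $o(T^{-1/2})$ perturbation of the threshold changes only a lower-order term in the exponent (of order $o(T^{1/2})\cdot\sqrt T\cdot T^{-1}=o(1)$ after the rate expansion — this needs the Lipschitz-in-threshold control that the sharp LD estimate provides), so $\liminf_{T\to\infty}\big(1-\bP^{N,T}_{\theta_1}(R_T)\big)/\big(1-\bP^{N,T}_{\theta_1}(R_T^\delta)\big)\ge 1$, which is exactly \eqref{eq:asym-def}.

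The main obstacle is the last step: I must show that a threshold perturbation of size $o(T^{-1/2})$ does not spoil the ratio of Type~II errors. This is delicate because the Type~II error decays exponentially, so naively a shift in the threshold could multiply it by $e^{\pm c\sqrt T}$; the resolution is that the \emph{sharp} large-deviation expansion (not merely the logarithmic/Donsker–Varadhan rate) controls the prefactor and shows the dependence of the exponent on the threshold is smooth, so a threshold shift of order $\varepsilon_T=o(T^{-1/2})$ perturbs $I^*T$ by $O(\varepsilon_T\sqrt T)\cdot o(1)=o(1)$. Getting this uniformity — essentially a local central limit / Edgeworth refinement of the large-deviation estimate, uniform over thresholds in a shrinking neighbourhood of $\theta_0$ — is precisely what the technical results of Section~\ref{ssec:CumFuncT} (the cumulant generating function computation plus its sharp inversion) are designed to supply, and I would cite them rather than reprove them here.
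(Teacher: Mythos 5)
Your strategy --- introduce the exact-level likelihood-ratio region $\widetilde R_T$ matching the Type~I error of the competitor $R_T$, invoke Neyman--Pearson, and then compare $\widetilde R_T$ to $R_T^\delta$ through their thresholds --- is a genuine variant of the paper's. The paper instead applies the pointwise inequality $\bE_{\theta_0}\bigl[(\1_{R_T^\delta}-\1_{R_T})\bigl(L-c^\delta_\alpha(T)\bigr)\bigr]\ge0$ directly to $R_T$ and $R_T^\delta$, obtaining
\[
\frac{1-\bP^{N,T}_{\theta_1}(R_T)}{1-\bP^{N,T}_{\theta_1}(R_T^\delta)}\ge 1+\frac{c^\delta_\alpha(T)}{1-\bP^{N,T}_{\theta_1}(R_T^\delta)}\bigl(\bP^{N,T}_{\theta_0}(R_T^\delta)-\bP^{N,T}_{\theta_0}(R_T)\bigr),
\]
so the single asymptotic $c^\delta_\alpha(T)/\bigl(1-\bP^{N,T}_{\theta_1}(R_T^\delta)\bigr)\sim\sqrt T$ from Lemma~\ref{lemma:BT-AsymEst}, together with the defining constraint of $\cK_\alpha^\sharp(\delta)$, finishes the argument; no sensitivity analysis of the Type~II error under a threshold perturbation is ever needed, which is what your detour forces you to do.

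Two concrete errors in your sketch must be repaired before that detour goes through. First, the likelihood-ratio test $\{L\ge c\}$ is \emph{not} equivalent to a one-sided test on $\widehat\theta_T^N$: from \eqref{eq:RadonNikodymUn} one finds $\log L(\theta_0,\theta_1,U_T^N)=(\theta_1-\theta_0)\sigma^{-2}Z_T\bigl(\widehat\theta_T^N-\tfrac{\theta_1+\theta_0}{2}\bigr)$ with $Z_T=\sum_{k=1}^N\lambda_k^{4\beta+2\gamma}\int_0^T u_k^2\,dt$ random, so $\{L\ge c\}$ is a joint region in $(\widehat\theta_T^N,Z_T)$; the correct one-dimensional representation, used in \eqref{def:RTdelta}, is the sub-level set of the statistic $I_T$, which is affine in $\log L$, not in $\widehat\theta_T^N$. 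Second, your final chain of inequalities has the wrong direction: the containment $\widetilde R_T\supseteq\{\cdots\}$ bounds $1-\bP^{N,T}_{\theta_1}(\widetilde R_T)$ from \emph{above}, not below; what you actually need is the lower bound $1-\bP^{N,T}_{\theta_1}(\widetilde R_T)\ge\bigl(1-\bP^{N,T}_{\theta_1}(R_T^\delta)\bigr)(1+o(1))$. The latter can be extracted --- an $o(T^{-1/2})$ shift of level corresponds, through the $\eta^*$-expansion, to an $o(1)$ change in $(\eta^*-\eta)T$, hence to a factor $e^{o(1)}$ in the $B_T$ term of the $A_TB_T$ decomposition of Lemma~\ref{lemma:BT-AsymEst} --- but your one-line estimate ``perturbs $I^*T$ by $O(\varepsilon_T\sqrt T)\cdot o(1)$'' conflates the fixed rate $I(\eta)T$ (which is unperturbed, since $\eta$ is held fixed) with the $(\eta^*-\eta)T$ correction, which is where the threshold dependence actually enters, and therefore does not supply the bookkeeping that makes the step correct.
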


\subsubsection{Asymptotics in large number of Fourier modes}\label{sec:mainResultsSpace}
Now we assume that $T$ is fixed and finite. Taking similar asymptotic approach as in  Section~\ref{sec:mainResultsTime}, we follow the same arguments in considering the corresponding classes of test and the tests themselves, and for brevity we omit the detailed discussion here.

Akin to Theorem~\ref{Th:NoMostAsymKstar}, one can prove the following result.
\begin{theorem}\label{Th:NoMostAsymKtilde} The rejection region $(\widetilde{R}_N)$ of the form
\begin{align}\label{def:RNStar}
\widetilde{R}_N=\left\{U_T^N: L(\theta_0,\theta_1,U_T^N)\ge \widetilde{c}_{\alpha}(N)\right\},
\end{align}
where $\widetilde{c}_\alpha(N)$ is some positive function of $N$, cannot be asymptotically the most powerful in the class
\begin{align}\label{eq:asym-class-N}
\widetilde{\mathcal{K}}_{\alpha}:=\left\{(R_N)_{N\in\bN^+}:  R_T\in\cB(C([0,T]; \bR^N), \ \limsup_{N\to\infty}\bP^{N,T}_{\theta_0}(R_N)\le\alpha\right\}.
\end{align}
\end{theorem}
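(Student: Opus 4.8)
The plan is to run the $N\to\infty$ analogue of the argument behind Theorem~\ref{Th:NoMostAsymKstar}, under the dictionary ``$T\to\infty$'' $\leftrightarrow$ ``$N\to\infty$'', ``$\sqrt T$--scale fluctuations of $\log L$'' $\leftrightarrow$ ``$N^{\beta/d+1/2}$--scale fluctuations of $\log L$'' (the same normalization as in \eqref{eq:AsymNormN}), and ``sharp large deviations for $T\to\infty$'' $\leftrightarrow$ ``sharp (Bahadur--Rao type) large deviations in the number of Fourier modes'' established in Section~\ref{sec:AsympMethodN} from the cumulant generating function of $\log L(\theta_0,\theta_1,U_T^N)$. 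The point of the argument is that the class $\widetilde{\cK}_\alpha$ in \eqref{eq:asym-class-N} only constrains the $\limsup$ of the Type~I error, and since the fluctuations of $\log L$ under $\bP^{N,T}_{\theta_0}$ diverge, one can always slightly enlarge a likelihood ratio rejection region without violating that constraint, while strictly improving the power at the large--deviation scale.

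In detail: let $(\widetilde R_N)$ be of the form \eqref{def:RNStar} with $(\widetilde R_N)\in\widetilde{\cK}_\alpha$. If $\limsup_{N}\bP^{N,T}_{\theta_0}(\widetilde R_N)<\alpha$, the test is trivially dominated inside $\widetilde{\cK}_\alpha$ by lowering the threshold to recover Type~I error $\alpha+o(1)$, so we may assume the $\limsup$ equals $\alpha$. By \eqref{eq:RadonNikodymUn} and the $N$--analogue of the asymptotic normality/CGF computation of Section~\ref{sec:AsympMethodN}, $\log L$ has, under $\bP^{N,T}_{\theta_0}$, mean of order $-N^{2\beta/d+1}$ and asymptotically Gaussian fluctuations of order $N^{\beta/d+1/2}$, while under $\bP^{N,T}_{\theta_1}$ its mean is of order $+N^{2\beta/d+1}$; since $\alpha\in(0,1)$, the level $\log\widetilde c_\alpha(N)$ necessarily sits at ``central-limit distance'' $O(N^{\beta/d+1/2})$ from the $\theta_0$--mean, hence deep in the left large--deviation zone of $\log L$ under $\bP^{N,T}_{\theta_1}$. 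Now fix a constant $a>0$ and set
$$
\widetilde R_N' := \left\{U_T^N : L(\theta_0,\theta_1,U_T^N)\ge e^{-a}\,\widetilde c_\alpha(N)\right\}.
$$
First, $(\widetilde R_N')\in\widetilde{\cK}_\alpha$: lowering the log-threshold by the fixed amount $a$ changes $\bP^{N,T}_{\theta_0}(\log L\ge\cdot)$ only by $O(N^{-\beta/d-1/2})\to0$, because the $\theta_0$--fluctuations of $\log L$ are of order $N^{\beta/d+1/2}\to\infty$ and asymptotically Gaussian, so $\limsup_N\bP^{N,T}_{\theta_0}(\widetilde R_N')\le\alpha$. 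Second, using the sharp large deviation asymptotics of $\log L$ under $\bP^{N,T}_{\theta_1}$ (numerator and denominator below are both of the form $C\,N^{-\beta/d-1/2}\exp(-N^{2\beta/d+1}I(\ell_N))$ with $\ell_N\to\ell$ strictly to the left of the zero $m_1$ of the convex rate function $I$, so the prefactors cancel),
$$
\frac{1-\bP^{N,T}_{\theta_1}(\widetilde R_N')}{1-\bP^{N,T}_{\theta_1}(\widetilde R_N)}
=\frac{\bP^{N,T}_{\theta_1}\!\left(\log L<\log\widetilde c_\alpha(N)-a\right)}{\bP^{N,T}_{\theta_1}\!\left(\log L<\log\widetilde c_\alpha(N)\right)}
\;\longrightarrow\; e^{\,a\,I'(\ell)}=e^{-a|I'(\ell)|}<1 ,
$$
since $I$ is strictly decreasing on $(-\infty,m_1)$. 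Hence $\liminf_{N\to\infty}\frac{1-\bP^{N,T}_{\theta_1}(\widetilde R_N')}{1-\bP^{N,T}_{\theta_1}(\widetilde R_N)}<1$, contradicting \eqref{eq:asym-def-N}; therefore no rejection region of the form \eqref{def:RNStar} is asymptotically the most powerful in $\widetilde{\cK}_\alpha$.

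The routine parts are the It\^o/ergodicity manipulation of \eqref{eq:RadonNikodymUn} identifying the leading order of $\log L$ and the subsequence bookkeeping when $\log\widetilde c_\alpha(N)$ oscillates. The main obstacle is supplying the two sharp asymptotic inputs: (i) asymptotic normality of $\log L(\theta_0,\theta_1,U_T^N)$ under $\bP^{N,T}_{\theta_0}$ at the precise scale $N^{\beta/d+1/2}$, which guarantees that a bounded shift of the log-threshold leaves membership in $\widetilde{\cK}_\alpha$ intact; and (ii) a Bahadur--Rao type large deviation estimate for $\log L$ under $\bP^{N,T}_{\theta_1}$ sharp enough to produce the exact multiplicative factor $e^{-a|I'(\ell)|}$ (the non-sharp logarithmic asymptotics would only give ratio $\le 1$, not $<1$). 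Both follow from the $N$--analogue of the cumulant generating function and large deviation analysis of Section~\ref{sec:AsympMethodN}; granting these, the proof is a verbatim transcription of that of Theorem~\ref{Th:NoMostAsymKstar}.
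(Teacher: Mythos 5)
Your proposal follows the route the paper itself indicates for this theorem (``by Theorem~\ref{th:MorePowerTests-N}, similar to the proof of Theorem~\ref{Th:NoMostAsymKstar}''): combine the CLT-scale expansion of $\log L(\theta_0,\theta_1,U^N_T)$ under $\bP^{N,T}_{\theta_0}$ (Lemma~\ref{lemma:ProbExpan-IN}) with the sharp Bahadur--Rao type large-deviation expansion under $\bP^{N,T}_{\theta_1}$ (Lemmas~\ref{lemma:CharExpan-VN}--\ref{lemma:BN-AsymEst}) to manufacture, for any candidate $(\widetilde R_N)$, a likelihood-ratio competitor in $\widetilde{\cK}_\alpha$ that strictly beats it. The technical inputs you name are exactly the ones Section~\ref{sec:AsympMethodN} supplies, so this is essentially the paper's approach. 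Where you differ is the witness test: you lower the log-threshold by a fixed $a>0$, i.e.\ $\bar\beta(N)\equiv-a$ in the paper's parametrization, which trivially satisfies \eqref{68} and gives ratio $\to e^{aI'(\ell)}=e^{-a}<1$ (note that with $\eta=-(\theta_1-\theta_0)^2T/(4\theta_0)$ one has $\widetilde\epsilon_\eta=-1$, hence $I'(\ell)=\widetilde\epsilon_\eta=-1$, and moreover $\widetilde{\cH}(-1)=0$, which is precisely why the exponential-in-$N$ piece of the Bahadur--Rao prefactor drops out and your ``prefactors cancel'' claim is legitimate). The paper's template proof (Theorem~\ref{Th:NoMostAsymKstar}) instead uses the adaptive shift $\beta^+_c=-T^{1/4}|\beta_c|^{1/2}$ after a case analysis on $\beta_c$; your constant shift is a genuine simplification and is sufficient, since only $\liminf<1$ is needed. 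The two places you wave hands are exactly the places the paper's case analysis makes explicit: (i) the ``trivially dominated'' case $\limsup_N\bP^{N,T}_{\theta_0}(\widetilde R_N)<\alpha$, where the comparator cannot be a constant shift of $\widetilde R_N$ but must be, e.g., $(\widehat R_N^0)$, and the quantitative power gap must be read off Lemma~\ref{lemma:BN-AsymEst} / Theorem~\ref{th:MorePowerTests-N}; and (ii) the subsequence bookkeeping that guarantees the Bahadur--Rao expansion can be applied uniformly to $\widetilde R_N$ (in the paper this is the step showing $\beta_c$ must obey \eqref{68}). With those two points spelled out, the proof is correct.
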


Similar to $R_T^\delta$, for a fixed parameter $\delta\in\bR$, we define the following family of rejection regions
\begin{align}\label{eq:bestR_N}
\widehat{R}_N^\delta=\left\{U_T^N: L(\theta_0,\theta_1,U_T^N)\ge \widehat{c}^\delta_{\alpha}(N)\right\},
\end{align}
where
\begin{align}\label{eq:bestC_N}
\widehat{c}^\delta_{\alpha}(N)=\exp\left(-\frac{(\theta_1-\theta_0)^2 TM}{4\theta_0}+\frac{(\theta_1-\theta_0)^2N}{8\theta_0^2 }-\frac{\sqrt{TM}(\theta_1^2-\theta_0^2)}{\sqrt{8\theta_0^3}}q_{\alpha} -\frac{\sqrt{T}(\theta_1^2-\theta_0^2)}{\sqrt{8\theta_0^3}}\delta\right).
\end{align}

\smallskip
With this at hand, we present the main result for the case of large number Fourier modes.
\begin{theorem}\label{th:MainResult2}
Assume $\beta/d\ge1/2$. Then, for any $\delta\in\bR$, the rejection region $(\widehat{R}_N^\delta)$ is asymptotically the most powerful in the class
\begin{align}\label{def:Kbar}
\widehat{\mathcal{K}}_{\alpha}(\delta):=\left\{(R_N): \limsup_{N\to\infty}\left(\bP^{N,T}_{\theta_0}(R_N)-\alpha\right)\sqrt{M}\le\widehat{\alpha}_1(\delta)\right\},
\end{align}
where
\begin{align}\label{def:tildeAlpha1}
\widehat{\alpha}_1(\delta)=&\left\{\begin{array}{ccc}\Phi_1^\delta(q_\alpha),&\textrm{ if $\beta/d>1/2$} \\ \Phi_1^\delta(q_\alpha)+\sqrt{\frac{2\beta/d+1}{\varpi^\beta}}\Phi_2^\delta(q_\alpha),&\textrm{ if $\beta/d=1/2$}\end{array}\right.,
\\
\Phi_1^\delta(x)=&\left[\frac{\theta_1-\theta_0}{4\sqrt{\pi\theta_0T}(\theta_1+\theta_0)} \left(\sum_{k=1}^{\infty} e^{-2\theta_0T\lambda_k^{2\beta}}\right)+\frac{1}{2\sqrt{\pi\theta_0T}}(1-x^2)+(2\pi)^{-1/2}\delta\right]e^{-x^2/2},\notag
\\
\Phi_2^\delta(x)=&\frac{(\theta_1-\theta_0)(5\theta_1^2+6\theta_1\theta_0-3\theta_0^2)} {8\sqrt{2\pi}\theta_0(\theta_1+\theta_0)(\theta_1^2-\theta_0^2)T}xe^{-x^2/2},\notag
\end{align}
with $q_\alpha$ denoting the $\alpha$ quantile of a standard Gaussian distribution.
\end{theorem}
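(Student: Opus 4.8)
The plan is to mirror the proof of Theorem~\ref{th:MainResult1} for $T\to\infty$, replacing the ergodic/large-deviation input by a sharp-deviations analysis in $N$. Write $\ell_N:=\log L(\theta_0,\theta_1,U_T^N)$. Substituting the dynamics \eqref{eq:OU-Fourier} into \eqref{eq:RadonNikodymUn} gives, under $\bP^{N,T}_{\theta_0}$, the decomposition
\[
\ell_N=-\frac{\theta_1-\theta_0}{\sigma}\sum_{k=1}^N\lambda_k^{2\beta+\gamma}\int_0^T u_k\,dw_k-\frac{(\theta_1-\theta_0)^2}{2\sigma^2}\sum_{k=1}^N\lambda_k^{4\beta+2\gamma}\int_0^T u_k^2\,dt=(\theta_1-\theta_0)B_N\Big(\widehat{\theta}_T^N-\tfrac{\theta_1+\theta_0}{2}\Big),
\]
with $B_N=\sigma^{-2}\sum_{k=1}^N\lambda_k^{4\beta+2\gamma}\int_0^T u_k^2\,dt$, a sum of independent contributions across $k$. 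First I would compute, from the explicit moments of the Ornstein--Uhlenbeck modes \eqref{eq:OU-Fourier} (using $U_0=0$), the mean, variance and higher cumulants of $\ell_N$ under $\bP^{N,T}_{\theta_0}$ and $\bP^{N,T}_{\theta_1}$: one finds $\bE_{\theta_0}\ell_N=-\tfrac{(\theta_1-\theta_0)^2}{4\theta_0}\big(TM-\tfrac{N}{2\theta_0}+\tfrac{1}{2\theta_0}\sum_{k}e^{-2\theta_0 T\lambda_k^{2\beta}}\big)$, $\Var_{\theta_0}\ell_N\sim\tfrac{(\theta_1^2-\theta_0^2)^2 T}{8\theta_0^3}M$, and every mode-contribution has third cumulant of order $\lambda_k^{2\beta}$, so $\kappa_3(\ell_N)=O(M)$; reading this against the definition of $\widehat{c}^\delta_\alpha(N)$ shows that $\log\widehat{c}^\delta_\alpha(N)$ equals $\bE_{\theta_0}\ell_N$ plus the bounded term $\tfrac{(\theta_1-\theta_0)^2}{8\theta_0^2}\sum_k e^{-2\theta_0 T\lambda_k^{2\beta}}$, the fluctuation-scale term $-\tfrac{(\theta_1^2-\theta_0^2)\sqrt{TM}}{\sqrt{8\theta_0^3}}q_\alpha$, and the $O(1)$ shift proportional to $\delta$ --- exactly the terms that reappear, after normalizing by $\sqrt{\Var_{\theta_0}\ell_N}$ and evaluating the Gaussian density at $q_\alpha$, in the formula \eqref{def:tildeAlpha1} for $\widehat{\alpha}_1(\delta)$.

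The technical heart of the proof is a pair of sharp-deviations expansions for $\ell_N$ as $N\to\infty$. Under $\bP^{N,T}_{\theta_0}$ one needs an Edgeworth-type expansion of $\bP^{N,T}_{\theta_0}(\ell_N\ge x)$ around the Gaussian approximation, with an explicit first correction of order $M^{-1/2}$; since the third cumulant divided by $(\Var_{\theta_0}\ell_N)^{3/2}$ is of order $M^{-1/2}$, this correction carries the Hermite-polynomial factors $1-q_\alpha^2$ (and, in the critical case below, $q_\alpha$) that appear in $\Phi_1^\delta,\Phi_2^\delta$. For the power one needs a Bahadur--Rao-type statement: since $\log\widehat{c}^\delta_\alpha(N)$ sits far below $\bE_{\theta_1}\ell_N$, we have $1-\bP^{N,T}_{\theta_1}(\widehat{R}_N^\delta)=\bP^{N,T}_{\theta_1}\big(\ell_N<\log\widehat{c}^\delta_\alpha(N)\big)$, a genuine large deviation, for which one must extract the exact exponential rate and a prefactor that depends continuously on the level. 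Because the mode-contributions are independent but not identically distributed --- the eigenvalues grow as $\lambda_k\sim\varpi^{1/2}k^{1/d}$ by \eqref{eq:asymEigenv} --- both are triangular-array statements, proved (as the authors note, by methods different from the Feynman--Kac/PDE route of the $T$-case) through a careful estimation of the cumulants of $\ell_N$, expressed as series in negative powers of $\lambda_k$. The hypothesis $\beta/d\ge1/2$ is precisely what makes these series converge at the rate needed to keep the $M^{-1/2}$-order terms well defined; moreover the next-order term in $\Var_{\theta_0}\ell_N$ is of order $N$, which is $o(\sqrt M)$ when $\beta/d>1/2$ but of order $\sqrt M$ when $\beta/d=1/2$, so it drops out of the $M^{-1/2}$-order probability in the first case and survives in the critical case, producing the extra summand $\sqrt{(2\beta/d+1)/\varpi^\beta}\,\Phi_2^\delta(q_\alpha)$ in $\widehat{\alpha}_1(\delta)$. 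I expect establishing these two sharp expansions, with the uniform control of cumulants they require, to be the main obstacle.

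With the expansions in hand the rest is structural and parallels the $T$-case. First, substituting $x=\log\widehat{c}^\delta_\alpha(N)$ into the Edgeworth expansion under $\bP^{N,T}_{\theta_0}$ yields $\big(\bP^{N,T}_{\theta_0}(\widehat{R}_N^\delta)-\alpha\big)\sqrt M\to\widehat{\alpha}_1(\delta)$, so $(\widehat{R}_N^\delta)\in\widehat{\mathcal K}_\alpha(\delta)$; since $\widehat{\alpha}_1(\delta)$ is affine and strictly increasing in $\delta$ (through the $(2\pi)^{-1/2}\delta$ term in $\Phi_1^\delta$), the family $\{\widehat{\mathcal K}_\alpha(\delta)\}_{\delta}$ is a genuine one-parameter refinement of $\widetilde{\mathcal K}_\alpha$, and substituting the same threshold into the $\bP^{N,T}_{\theta_1}$ large-deviation expansion pins down the rate at which $1-\bP^{N,T}_{\theta_1}(\widehat{R}_N^\delta)\to0$. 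Second, for optimality: given any $(R_N)\in\widehat{\mathcal K}_\alpha(\delta)$, apply Theorem~\ref{th:NPLemma} at each $N$ at level $a_N:=\bP^{N,T}_{\theta_0}(R_N)$ to obtain a likelihood-ratio test $\{\ell_N\ge c(a_N)\}$ whose power is at least $\bP^{N,T}_{\theta_1}(R_N)$; since $\limsup_N(a_N-\alpha)\sqrt M\le\widehat{\alpha}_1(\delta)$ and $\bP^{N,T}_{\theta_0}(\widehat{R}_N^\delta)=\alpha+\widehat{\alpha}_1(\delta)M^{-1/2}+o(M^{-1/2})$, for any $\varepsilon>0$ and $N$ large the threshold $c(a_N)$ falls below $\log\widehat{c}^\delta_\alpha(N)$ by at most an amount of order $\varepsilon$, which is negligible on the exponential scale as $\varepsilon\to0$, and the sharp large-deviation estimate then gives
\[
\underset{N\to\infty}{\liminf}\ \frac{1-\bP^{N,T}_{\theta_1}(R_N)}{1-\bP^{N,T}_{\theta_1}(\widehat{R}_N^\delta)}\ \ge\ \underset{N\to\infty}{\liminf}\ \frac{1-\bP^{N,T}_{\theta_1}(\ell_N\ge c(a_N))}{1-\bP^{N,T}_{\theta_1}(\widehat{R}_N^\delta)}\ \ge\ 1,
\]
which is \eqref{eq:asym-def-N}. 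This last step is exactly where the sharp prefactor in the large-deviation estimate is indispensable: a crude central limit theorem, or a logarithmic-scale large-deviation principle, would only identify the power to leading exponential order and could not compare tests whose Type~I errors differ at order $M^{-1/2}$.
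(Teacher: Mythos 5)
Your overall architecture matches the paper's: establish an Edgeworth-type expansion under $\bP^{N,T}_{\theta_0}$ (Lemma~\ref{lemma:ProbExpan-IN}) to place $(\widehat{R}_N^\delta)$ in the class $\widehat{\mathcal{K}}_\alpha(\delta)$ and to explain the two regimes of $\beta/d$, and a sharp large-deviations estimate under $\bP^{N,T}_{\theta_1}$ (Lemmas~\ref{lemma:CharExpan-VN}--\ref{lemma:BN-AsymEst}) to pin down the power to the level of the prefactor. You also correctly identify the decomposition $\ell_N=(\theta_1-\theta_0)B_N(\widehat\theta^N_T-\tfrac{\theta_1+\theta_0}{2})$ and the origin of the two cases $\beta/d>1/2$ versus $\beta/d=1/2$ as coming from $NM^{-1}$ versus $M^{-1/2}$.

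The difference, and the gap, is in the final optimality step. The paper does \emph{not} replace an arbitrary $(R_N)\in\widehat{\mathcal K}_\alpha(\delta)$ by a likelihood-ratio test at the same level and compare thresholds. Instead, following the proof of Theorem~\ref{th:MainResult1}, it applies the Neyman--Pearson pointwise inequality directly to the \emph{specific} test $\widehat{R}_N^\delta$ versus $R_N$:
\begin{align*}
\frac{1-\bP^{N,T}_{\theta_1}(R_N)}{1-\bP^{N,T}_{\theta_1}(\widehat{R}_N^\delta)}
\ \ge\ 1+\frac{\widehat{c}^\delta_\alpha(N)}{1-\bP^{N,T}_{\theta_1}(\widehat{R}_N^\delta)}
\Bigl(\bP^{N,T}_{\theta_0}(\widehat{R}_N^\delta)-\bP^{N,T}_{\theta_0}(R_N)\Bigr),
\end{align*}
and then uses Lemma~\ref{lemma:BN-AsymEst} to establish that $\widehat{c}^\delta_\alpha(N)\big/\bigl(1-\bP^{N,T}_{\theta_1}(\widehat{R}_N^\delta)\bigr)\sim C\sqrt{M}$, so that the right-hand side has $\liminf\ge 1 + C\widehat\alpha_1(\delta)-C\widehat\alpha_1(\delta)=1$. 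This produces the result in one step from quantities that are already controlled. Your route --- replace $R_N$ by the exact NP test $\{\ell_N\ge c(a_N)\}$ at level $a_N=\bP^{N,T}_{\theta_0}(R_N)$, then argue that $c(a_N)$ is within $O(\varepsilon)$ of $\log\widehat{c}^\delta_\alpha(N)$ --- is sound in spirit, but the stated justification is wrong: an $O(\varepsilon)$ shift in the threshold is \emph{not} ``negligible on the exponential scale'', and being negligible on the exponential scale would in any case not suffice (as you yourself note at the end). What actually happens is that under the tilted measure $\bQ_N$ of \eqref{eq:MeasureChange2} an $O(\varepsilon)$ shift of the threshold multiplies the Type~II probability by $e^{-\widetilde\epsilon_\eta\cdot O(\varepsilon)}$, so the ratio you write down converges to a constant that tends to $1$ as $\varepsilon\downarrow0$; making that uniform in $N$ requires precisely the uniform-in-level control provided by Lemma~\ref{lemma:BN-AsymEst}, which you would have to invoke over a small interval of levels rather than at a single point. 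The paper's inequality-based route sidesteps this bookkeeping entirely and is the cleaner argument.
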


\noindent

Although the results are similar to that from previous section, the techniques and methods that deal with asymptotics of the probabilities under null and under alternative are different, as can be seen from the proofs below.

\section{Proofs of the main results}\label{sec:proofs}
%In the previous section we introduced some classes of tests with Type~I error close to the significance level, and in which there exists test of likelihood ratio type that is (asymptotically) the most powerful.
In this section we will not only prove the main results over the course of several technical lemmas, but we will also give more details on how the proposed classes of tests introduced in Section~\ref{sec:mainResults} were obtained. We hope that the preliminary discussions from Section~\ref{ssec:AsymptoticT} will lit more light on the structure of those classes of tests, and the idea behind the notion of asymptotically most powerful tests.
Although we included the results on  sharp large deviations bounds (see Lemmas~\ref{lemma:CharExpan-VT}-\ref{lemma:ProbExpan-IT}, and Lemmas~\ref{lemma:CharExpan-VN}-\ref{lemma:ProbExpan-IN}) in this technical part of the paper, we believe they are of independent interest and could be applied to other relevant problems.

\subsection{Large times: proofs}\label{ssec:AsymptoticT}

Throughout this section, we assume that the number of observed Fourier modes $N\in\bN$ is fixed, and we will use the notations from Section~\ref{sec:mainResultsTime}.
We will show that the class $\cK_\alpha^*$, being a natural choice for an asymptotic rejection region at level $\alpha$, it is too large for our purposes,  and finding asymptotically the most powerful tests  of likelihood ratio types within this class is not feasible.
The new class of tests $\cK_\alpha^\sharp(\delta)$, slightly smaller than $\cK_\alpha^*$, allows to identify an easy computable asymptotically the most powerful test.
The reader may wonder why we do consider $\cK_\alpha^*$, and not move directly to the `right' class of test. The reason is twofold.
The series of technical results that lead to Theorem~\ref{th:MorePowerTests}, that show that in $\cK_\alpha^*$ we can find tests that are more powerful than those that are natural candidates, are also essentially used in the proof of the main result - Theorem~\ref{th:MainResult1}. Secondly, while the result is negative for $\cK_\alpha^*$, it gives important insights about the nature of the problem and why $\cK_\alpha^\sharp$ makes sense to be considered. Moreover, and probably most importantly, this gives a better intuition on how to address the case $N\to\infty$, for which most of the technical results are proved quite differently.

Next result is in a sense a version of Neyman-Pearson lemma, that gives sufficient conditions for likelihood ratio type test to be asymptotically the most powerful in the class $\cK_\alpha^*$.

\begin{theorem}\label{th:regRegAT}%\label{110}
Consider the rejection region of the form
\begin{align}\label{eq:regRegT1}
R_{T}^*=\left\{U_T^N: L(\theta_0,\theta_1,U_T^N)\ge c^*_{\alpha}(T)\right\},
\end{align}
where $c^*_{\alpha}(T)$ is a function of $T$ such that, $c^*_{\alpha}(T)>0$ for all $T>0$ and
\begin{align}
&\lim_{T\to\infty}\bP^{N,T}_{\theta_0}(R_{T}^*) = \alpha, \label{eq:RegT2} \\
&\lim_{T\to\infty}\frac{c^*_{\alpha}(T)}{1-\bP^{N,T}_{\theta_1}(R_{T}^*)}<\infty. \label{eq:RegT3}
\end{align}
Then $(R_{T}^*)$ is asymptotically the most powerful in $\mathcal{K}_{\alpha}^*$.
\end{theorem}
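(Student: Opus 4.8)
The plan is to mimic the Neyman–Pearson argument of Theorem~\ref{th:NPLemma}, but at the level of asymptotics, comparing $R_T^*$ against an arbitrary competitor $(R_T)\in\cK_\alpha^*$. Fix such a competitor. The key algebraic identity is the same pointwise inequality used before: since $R_T^*$ is exactly the super-level set $\{L(\theta_0,\theta_1,U_T^N)\ge c^*_\alpha(T)\}$, we have $(\1_{R_T^*}-\1_{R_T})(L(\theta_0,\theta_1,U_T^N)-c^*_\alpha(T))\ge0$ pointwise, hence taking $\bE_{\theta_0}$ and using the change of measure \eqref{eq:RadonNikodymUn},
\begin{align*}
\bP^{N,T}_{\theta_1}(R_T^*)-\bP^{N,T}_{\theta_1}(R_T)
&=\bE_{\theta_0}\!\left[L(\theta_0,\theta_1,U_T^N)\left(\1_{R_T^*}-\1_{R_T}\right)\right]\\
&\ge c^*_\alpha(T)\left(\bP^{N,T}_{\theta_0}(R_T^*)-\bP^{N,T}_{\theta_0}(R_T)\right).
\end{align*}
Rearranging, this says
$$
\left(1-\bP^{N,T}_{\theta_1}(R_T)\right)-\left(1-\bP^{N,T}_{\theta_1}(R_T^*)\right)
\ge c^*_\alpha(T)\left(\bP^{N,T}_{\theta_0}(R_T^*)-\bP^{N,T}_{\theta_0}(R_T)\right).
$$

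Next I would divide through by $1-\bP^{N,T}_{\theta_1}(R_T^*)$, which is strictly positive for all large $T$ (indeed by \eqref{eq:RegT2} the Type~I error tends to $\alpha<1$, and one checks the power cannot already equal $1$; more simply, \eqref{eq:RegT3} presupposes the denominator is nonzero). This yields
$$
\frac{1-\bP^{N,T}_{\theta_1}(R_T)}{1-\bP^{N,T}_{\theta_1}(R_T^*)}
\ge 1+\frac{c^*_\alpha(T)}{1-\bP^{N,T}_{\theta_1}(R_T^*)}\left(\bP^{N,T}_{\theta_0}(R_T^*)-\bP^{N,T}_{\theta_0}(R_T)\right).
$$
Now take $\liminf_{T\to\infty}$. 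By hypothesis \eqref{eq:RegT3} the prefactor $c^*_\alpha(T)/(1-\bP^{N,T}_{\theta_1}(R_T^*))$ is bounded, say by some constant $C<\infty$; by \eqref{eq:RegT2}, $\bP^{N,T}_{\theta_0}(R_T^*)\to\alpha$; and because $(R_T)\in\cK_\alpha^*$ we have $\limsup_{T\to\infty}\bP^{N,T}_{\theta_0}(R_T)\le\alpha$, so $\liminf_{T\to\infty}\bigl(\alpha-\bP^{N,T}_{\theta_0}(R_T)\bigr)\ge0$. The one subtlety is that the bracket $\bP^{N,T}_{\theta_0}(R_T^*)-\bP^{N,T}_{\theta_0}(R_T)$ need not be of one sign, so I would split into the two cases according to whether the liminf of this bracket is $\ge0$ or $<0$; in the latter case one still gets, after passing to a subsequence realizing the liminf, that the product of the bounded nonnegative prefactor with a quantity whose limit is $\ge0$ has liminf $\ge0$, using that a bounded factor times a term tending to a nonnegative limit cannot drag the product below $0$ in the limit. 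Either way, $\liminf_{T\to\infty}$ of the right-hand side is $\ge1$, which is exactly \eqref{eq:asym-def}. Since $(R_T)\in\cK_\alpha^*$ was arbitrary, $(R_T^*)$ is asymptotically the most powerful in $\cK_\alpha^*$.

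The genuinely delicate point — and the only place where the three displayed hypotheses are all used — is controlling the sign and size of the term $c^*_\alpha(T)\bigl(\bP^{N,T}_{\theta_0}(R_T^*)-\bP^{N,T}_{\theta_0}(R_T)\bigr)$ in the limit: one must argue that the (possibly negative) deficit $\bP^{N,T}_{\theta_0}(R_T^*)-\alpha$ together with the term $\alpha-\bP^{N,T}_{\theta_0}(R_T)$ cannot, when multiplied by the unbounded factor $c^*_\alpha(T)$, survive as a strictly negative contribution, and this is precisely what the normalization by $1-\bP^{N,T}_{\theta_1}(R_T^*)$ in \eqref{eq:RegT3} buys us. I would therefore phrase the limit argument carefully by extracting a subsequence along which every relevant quantity converges (to a finite limit or $+\infty$), and checking the inequality on that subsequence. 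Everything else is the verbatim Neyman–Pearson manipulation already used in the proof of Theorem~\ref{th:NPLemma}.
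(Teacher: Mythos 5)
Your proof is correct and follows essentially the same route as the paper's: the pointwise Neyman--Pearson inequality from Theorem~\ref{th:NPLemma}, the change of measure under $\bE_{\theta_0}$, division by $1-\bP^{N,T}_{\theta_1}(R_T^*)$, and passage to the $\liminf$ using \eqref{eq:RegT2}, \eqref{eq:RegT3}, and the defining property of $\cK_\alpha^*$. The extra care you take with subsequences to handle the possibly negative bracket is a valid (if slightly more elaborate) way to justify the final limit step that the paper carries out by directly splitting the $\liminf$ into a $\lim\cdot\lim$ minus a $\lim\cdot\limsup$.
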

\begin{proof}
Assume that $(R^*_T)$ satisfies \eqref{eq:regRegT1}-\eqref{eq:RegT3}. By the same reasoning as in Theorem~\ref{th:NPLemma}, for a fixed $T$ and any $(R_T)\in\mathcal{K}_{\alpha}^*$, we have that
\begin{align*}
\bP^{N,T}_{\theta_1}(R_{T}^*)-\bP^{N,T}_{\theta_1}(R_T)\ge c^*_{\alpha}(T)\left(\bP^{N,T}_{\theta_0}(R_{T}^*)-\bP^{N,T}_{\theta_0}(R_T)\right),
\end{align*}
which can be written as
\begin{align*}
\frac{1-\bP^{N,T}_{\theta_1}(R_T)}{1-\bP^{N,T}_{\theta_1}(R_{T}^*)} \ge1+\frac{c^*_{\alpha}(T)}{1-\bP^{N,T}_{\theta_1}(R_{T}^*)}\left(\bP^{N,T}_{\theta_0}(R_{T}^*)-\bP^{N,T}_{\theta_0}(R_T)\right).
\end{align*}
From here, using \eqref{eq:RegT2} and \eqref{eq:RegT3}, we deduce
\begin{align*}
\underset{T\to\infty}{\liminf}\frac{1-\bP^{N,T}_{\theta_1}(R_T)}{1-\bP^{N,T}_{\theta_1}(R_{T}^*)} \ge&1+\lim_{T\to\infty}\frac{c^*_{\alpha}(T)}{1-\bP^{N,T}_{\theta_1}(R_{T}^*)}\lim_{T\to\infty}\bP^{N,T}_{\theta_0}(R_{T}^*) \\
&-\lim_{T\to\infty}\frac{c^*_{\alpha}(T)}{1-\bP^{N,T}_{\theta_1}(R_{T}^*)}\limsup_{T\to\infty}\bP^{N,T}_{\theta_0}(R_{T}) \\
=&1+\lim_{T\to\infty}\frac{c^*_{\alpha}(T)}{1-\bP^{N,T}_{\theta_1}(R_{T}^*)}\left(\alpha-\limsup_{T\to\infty}\bP^{N,T}_{\theta_0}(R_{T})\right)\ge1.
\end{align*}
This completes the proof.
\end{proof}

Of course the goal is to find an explicit formula for $c^*_\alpha(T)$ such that \eqref{eq:regRegT1}-\eqref{eq:RegT3}  are satisfied.
We will start with the following  heuristic arguments.
By \eqref{eq:OU-Fourier}, \eqref{eq:RadonNikodymUn},  and It\^o's formula we have the following
\begin{align}\label{eq:heuristT}
\bP^{N,T}_{\theta_0}& (L(\theta_0,\theta_1,U_T^N)\ge c^*_{\alpha})\notag \\
=&\bP^{N,T}_{\theta_0}\left(-\sum_{k=1}^N\lambda_k^{2\beta+2\gamma}\left(\int_0^Tu_k(t)du_k(t) +\frac{1}{2}(\theta_1+\theta_0)\lambda_k^{2\beta}\int_0^Tu_k^2(t)dt\right)\ge \frac{\sigma^{2}\ln c^*_{\alpha}}{\theta_1-\theta_0}\right)\notag
\\
%=&\bP^{N,T}_{\theta_0}\left(-\sum_{k=1}^N\lambda_k^{2\beta+2\gamma}\left(\int_0^Tu_k(t)du_k(t) +\frac{\theta_1+\theta_0}{2\theta_0}\int_0^Tu_k\left(\sigma\lambda_k^{-\gamma}dw_k-du_k\right)\right)\ge \frac{\sigma^{2}\ln c^*_{\alpha}}{\theta_1-\theta_0}\right)\notag
%\\
%=&\bP^{N,T}_{\theta_0}\left(\sum_{k=1}^N\lambda_k^{2\beta+2\gamma}\left((\theta_1-\theta_0)\int_0^Tu_kdu_k-(\theta_1+\theta_0)\sigma\lambda_k^{-\gamma}\int_0^Tu_kdw_k\right)\ge \frac{2\theta_0\sigma^{2}\ln c^*_{\alpha}}{\theta_1-\theta_0}\right)\notag
%\\
=&\bP^{N,T}_{\theta_0}\left(\sum_{k=1}^N\lambda_k^{2\beta+2\gamma}\left(\frac{\theta_1-\theta_0}{2}\left(u_k^2(T)-\sigma^2\lambda_k^{-2\gamma}T\right)-(\theta_1+\theta_0)\sigma\lambda_k^{-\gamma}\int_0^Tu_kdw_k\right)\ge \frac{2\theta_0\sigma^{2}\ln c^*_{\alpha}}{\theta_1-\theta_0}\right)\notag
\\
%=&\bP^{N,T}_{\theta_0}\left(\sum_{k=1}^N\frac{\lambda_k^{2\beta+2\gamma}u_k^2(T)}{\sigma^2T}-\frac{2(\theta_1+\theta_0)}{(\theta_1-\theta_0)\sigma T}\sum_{k=1}^N\lambda_k^{2\beta+\gamma}\int_0^Tu_kdw_k\ge \frac{4\theta_0\ln c^*_{\alpha}}{(\theta_1-\theta_0)^2T}+M\right)\notag
%\\
= &\bP^{N,T}_{\theta_0}\left(X_T-\frac{2(\theta_1+\theta_0)}{(\theta_1-\theta_0)\sigma\sqrt{T}}Y_T\ge \frac{4\theta_0\ln c^*_{\alpha}}{(\theta_1-\theta_0)^2T}+M\right),
\end{align}
where
\begin{align*}
M:=\sum_{k=1}^N\lambda_k^{2\beta}, \quad X_T:=\sum_{k=1}^N\frac{\lambda_k^{2\beta+2\gamma}u_k^2(T)}{\sigma^2T},\qquad Y_T:=\frac{1}{\sqrt{T}}\sum_{k=1}^N\lambda_k^{2\beta+\gamma}\int_0^Tu_kdw_k.
\end{align*}
Next note that, since $X_T\ge0$, we have that
\begin{align}\label{15}
\bP^{N,T}_{\theta_0}(L(\theta_0,\theta_1,U_T^N)\ge c^*_{\alpha})\ge\bP^{N,T}_{\theta_0}\left(-\frac{2(\theta_1+\theta_0)}{(\theta_1-\theta_0)\sigma\sqrt{T}}Y_T\ge \frac{4\theta_0\ln c^*_{\alpha}}{(\theta_1-\theta_0)^2T}+M\right).
\end{align}
By \eqref{eq:7}, we get that  $Y_T \overset{d}\rightarrow\mathcal{N}(0,\sigma^2M/(2\theta_0))$, as $T\to\infty$. Thus, it is reasonable to choose $c_{\alpha}$ such that
\begin{align*}
-\sqrt{\frac{2\theta_0}{M}}\frac{(\theta_1-\theta_0)\sqrt{T}}{2(\theta_1+\theta_0)}\left[\frac{4\theta_0\ln c^*_{\alpha}}{(\theta_1-\theta_0)^2T}+M\right]= q_\alpha,
\end{align*}
where $q_\alpha$ is $\alpha$ quantile of standard Gaussian distribution. Hence, we take
\begin{align}\label{eq:cAlpha1}
c_{\alpha}^\sharp(T)=\exp\left(-\frac{(\theta_1-\theta_0)^2}{4\theta_0}MT-\frac{\theta_1^2-\theta_0^2}{2\theta_0}\sqrt{\frac{MT}{2\theta_0}}q_{\alpha}\right).
\end{align}
Henceforth, we will denote the rejection region $(R_T^*)$ given by \eqref{eq:regRegT1} and the above $c_\alpha^\sharp$ by $(R_T^\sharp)$, that is,
\begin{align}\label{eq:RTSharp}
R_T^\sharp:=\{U_T^N: L(\theta_0,\theta_1,U_T^N)\ge c_{\alpha}^\sharp(T)\},\qquad\textrm{for all $T$},
\end{align}
where $c_{\alpha}^\sharp(T)$ is defined by \eqref{eq:cAlpha1}.

\begin{proposition}\label{prop:RTstar-in-K}
The rejection region $(R_T^\sharp)$ belongs to $\mathcal{K}_{\alpha}^*$, and moreover
\begin{align*}
\lim_{T\to\infty}\bP^{N,T}_{\theta_0}(R_{T}^\sharp)=\alpha.
\end{align*}
\end{proposition}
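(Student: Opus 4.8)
The plan is to reduce the statement to the single limit $\lim_{T\to\infty}\bP^{N,T}_{\theta_0}(R_T^\sharp)=\alpha$: once this is shown, membership $(R_T^\sharp)\in\cK_\alpha^*$ is automatic, since the defining requirement of $\cK_\alpha^*$ is only $\limsup_{T\to\infty}\bP^{N,T}_{\theta_0}(R_T^\sharp)\le\alpha$.

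To prove the limit, I would start from the identity \eqref{eq:heuristT} applied with $c^*_\alpha=c^\sharp_\alpha(T)$, which writes $\bP^{N,T}_{\theta_0}(R_T^\sharp)$ as the probability of the event
\[
X_T-\frac{2(\theta_1+\theta_0)}{(\theta_1-\theta_0)\sigma\sqrt{T}}Y_T\ge \frac{4\theta_0\ln c^\sharp_\alpha(T)}{(\theta_1-\theta_0)^2T}+M .
\]
Substituting the explicit form \eqref{eq:cAlpha1} of $c^\sharp_\alpha(T)$, the leading exponent $-\tfrac{(\theta_1-\theta_0)^2}{4\theta_0}MT$ exactly cancels the additive $M$ on the right, and a short computation reduces the right-hand side to $-\tfrac{2(\theta_1+\theta_0)}{(\theta_1-\theta_0)\sqrt{T}}\sqrt{M/(2\theta_0)}\,q_\alpha$. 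Since both sides then decay like $T^{-1/2}$, the next step is to multiply through by $\tfrac{(\theta_1-\theta_0)\sqrt{T}}{2(\theta_1+\theta_0)}>0$ (this is positive because $\theta_1>\theta_0$), rewriting the event as
\[
\frac{Y_T}{\sigma}-\frac{(\theta_1-\theta_0)\sqrt{T}}{2(\theta_1+\theta_0)}\,X_T\ \le\ \sqrt{\frac{M}{2\theta_0}}\;q_\alpha .
\]

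It then remains to identify the limiting distribution of the left-hand side under $\bP_{\theta_0}$. For the $X_T$-term, since $U_0=0$ each $u_k(T)$ is centered Gaussian with $\bE_{\theta_0}[u_k^2(T)]=\tfrac{\sigma^2}{2\theta_0\lambda_k^{2\beta+2\gamma}}\bigl(1-e^{-2\theta_0\lambda_k^{2\beta}T}\bigr)\le\tfrac{\sigma^2}{2\theta_0\lambda_k^{2\beta+2\gamma}}$, so $\bE_{\theta_0}\bigl[\sqrt{T}\,X_T\bigr]\le \tfrac{N}{2\theta_0\sqrt{T}}\to 0$, and Markov's inequality together with $X_T\ge 0$ gives $\sqrt{T}\,X_T\to 0$ in $\bP_{\theta_0}$-probability. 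For the $Y_T$-term, \eqref{eq:7} (with $\theta=\theta_0$) gives $Y_T/\sigma\overset{d}\to\cN(0,M/(2\theta_0))$. Slutsky's theorem then yields convergence in distribution of the left-hand side to $\cN(0,M/(2\theta_0))$, and because this limit law has a continuous distribution function, $\bP^{N,T}_{\theta_0}(R_T^\sharp)\to\bP\bigl(\cN(0,M/(2\theta_0))\le\sqrt{M/(2\theta_0)}\,q_\alpha\bigr)=\Phi(q_\alpha)=\alpha$, by the definition of the $\alpha$-quantile $q_\alpha$. I do not anticipate a genuine obstacle; the only point requiring care is that both sides of the defining inequality tend to $0$, so one must rescale by the right power of $T$ (namely $\sqrt{T}$) before applying the weak-convergence statement \eqref{eq:7}, and check that $X_T$ decays strictly faster than $T^{-1/2}$ so that it does not affect the limit.
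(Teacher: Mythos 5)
Your argument is correct and rests on the same core decomposition as the paper: start from \eqref{eq:heuristT}, observe that $c_\alpha^\sharp$ is chosen precisely to cancel the $M$ term, and then isolate the asymptotically negligible $X_T$ contribution from the asymptotically Gaussian $Y_T$ contribution. The only difference is in packaging: the paper proves $\liminf\ge\alpha$ and $\limsup\le\alpha$ separately, using the inequality \eqref{15} for the lower bound and an explicit Gaussian tail estimate on $\bP(X_T\ge\delta)$ with $\delta\sim T^{-1/2}$ for the upper bound, whereas you establish $\sqrt{T}X_T\to 0$ in probability once via Markov's inequality and then invoke Slutsky's theorem together with continuity of the limiting Gaussian law to get the limit in a single stroke, which is tidier.
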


\begin{proof}
By \eqref{eq:heuristT}, we deduce that, for any $\delta>0$,
\begin{align}\label{eq:19}
\bP^{N,T}_{\theta_0}&(L(\theta_0,\theta_1,U_T^N)\ge c_{\alpha}^\sharp)\le\bP^{N,T}_{\theta_0}\left(X_T\ge\delta\right)
+\bP^{N,T}_{\theta_0}\left(-\frac{2(\theta_1+\theta_0)}{(\theta_1-\theta_0)\sigma\sqrt{T}}Y_T\ge \frac{4\theta_0\ln c_{\alpha}^\sharp}{(\theta_1-\theta_0)^2T}+M-\delta\right).
\end{align}
Next, taking into account that $u_k(T)\overset{d}{\sim}\mathcal{N}\left(0,\sigma^2\displaystyle\frac{1-\exp(-2\lambda_k^{2\beta}\theta_0 T)}{2\lambda_k^{2\beta+2\gamma}\theta_0}\right)$, we obtain the following estimates
\begin{align}\label{eq:20}
\bP^{N,T}_{\theta_0}\left(X_T\ge\delta\right)=&\bP^{N,T}_{\theta_0}\left(\sum_{k=1}^N\frac{\lambda_k^{2\beta+2\gamma}u_k^2(T)}{\sigma^2T}\ge\delta\right)
\le\sum_{k=1}^N\bP^{N,T}_{\theta_0}\left(\frac{\lambda_k^{2\beta+2\gamma}u_k^2(T)}{\sigma^2T}\ge\frac{\delta}{N}\right)\notag \\
=&\sum_{k=1}^N\bP^{N,T}_{\theta_0}\left(|u_k(T)|\ge\sqrt{\frac{\sigma^2\delta T}{\lambda_k^{2\beta+2\gamma}N}}\right)
=2\sum_{k=1}^N\Phi\left(-\sqrt{\frac{2\delta\theta_0 T}{N\left(1-\exp\left(-2\lambda_k^{2\beta}\theta_0 T\right)\right)}}\right)\notag\\
\le&2N\Phi\left(-\sqrt{\frac{2\delta\theta_0 T}{N}}\right)\le\sqrt{\frac{4N^3}{\pi\delta\theta_0 T}}\exp(-\delta\theta_0 T/N),
\end{align}
where  $\Phi(\cdot)$ denotes the cumulative distribution function of a standard Gaussian.
Using \eqref{15} with $c_{\alpha}^*=c_{\alpha}^\sharp$ and asymptotic normality of $Y$, we conclude that $\underset{T\to\infty}{\liminf}\bP^{N,T}_{\theta_0}(R_{T}^\sharp)\ge\alpha.$
Taking in \eqref{eq:19} and \eqref{eq:20}, $\delta=\displaystyle\frac{2(\theta_1+\theta_0)\epsilon}{(\theta_1-\theta_0)\sigma\sqrt{T}}$, with $\epsilon>0$,  we obtain
\begin{align*}
\bP^{N,T}_{\theta_0}(R_{T}^\sharp)=&\bP^{N,T}_{\theta_0}(L(\theta_0,\theta_1,U_T^N)\ge c_{\alpha}^\sharp) \\
\le&\sqrt{\frac{2N^3(\theta_1+\theta_0)\sigma}{\pi\epsilon\theta_0(\theta_1+\theta_0)\sqrt{T}}}
\exp\left(-\frac{2\epsilon\theta_0(\theta_1+\theta_0)\sqrt{T}}{(\theta_1-\theta_0)N\sigma}\right) \\
&\quad+\bP^{N,T}_{\theta_0}(Y_T\le\sqrt{\sigma^2M/(2\theta_0)}q_{\alpha}+\epsilon).
\end{align*}
Finally, taking the limsup in the above inequality, and using asymptotic normality of $Y$, and using the fact that $\epsilon$ is arbitrary, we conclude that
\begin{align*}
\limsup_{T\to\infty}\bP^{N,T}_{\theta_0}(R_T^\sharp)\le\alpha.
\end{align*}
This ends the proof.
\end{proof}

While $(R_T^\sharp)$ has exact asymptotic level $\alpha$ and belongs to $\cK_\alpha^*$, unfortunately it may not necessarily be asymptotically the most powerful in $\cK_\alpha^*$.
More precisely,  it may not satisfy \eqref{eq:RegT3}, and eventually we will show that one can find many other tests that converge to one faster than $(R_T^\sharp)$.
However, it `almost' satisfies the desired properties of convergence, and by shrinking appropriately the class $\mathcal{K}^*_{\alpha}$, we can make a test similar to $(R_T^\sharp)$ to be the most powerful in the new class. To achieve this, we will need some fine results on the asymptotics of the power of the test  $\bP_{\theta_1}(R^\sharp_T)$ which are presented in the following series of lemmas. In particular, we make use of some new results on large deviation principle of the Log-likelihood ratio and find its rate function.

\subsubsection{Cumulant generating function of the Log-Likelihood ratio}\label{ssec:CumFuncT}

 We start with computation of the cumulant generating function $m_T(\epsilon)=\bE\left[\exp\left(\epsilon\ln L(\theta_0,\theta_1,U_T^N)\right)\right]$ (see Appendix~\ref{appendix:cum}) of the Log-Likelihood ratio. To accomplish this, we will use appropriately Feynman--Kac formula (for a similar approach, see also \citet{GapeevKuchler2008}).

By It\^o's formula, we get
\begin{align}\label{eq:11}
\ln L(\theta_0,\theta_1,U_T^N)=
-\frac{\theta_1-\theta_0}{2\sigma^2}\sum_{k=1}^N\lambda_k^{2\beta+2\gamma}u_k^2(T)
+\frac{\theta_1-\theta_0}{2}MT
-\frac{\theta_1^2-\theta_0^2}{2\sigma^2}\sum_{k=1}^N\int_0^T\lambda_k^{4\beta+2\gamma}u_k^2(t)dt.
\end{align}
For $\mathbf{u}=(u_1,\ldots,u_N)\in\bR^N$ and $\epsilon>0$, we set
\begin{align*}
r(\mathbf{u},t)&:=\frac{\epsilon(\theta_1^2-\theta_0^2)}{2\sigma^2}\sum_{k=1}^N\lambda_k^{4\beta+2\gamma}u_k^2, \\
F(\mathbf{u}) & := \exp\left(-\frac{\epsilon(\theta_1-\theta_0)}{2\sigma^2}\sum_{k=1}^N\lambda_k^{2\beta+2\gamma}u_k^2
                +\frac{\epsilon(\theta_1-\theta_0)}{2}MT\right).
\end{align*}
By Feynman-Kac formula, the function
\begin{align}\label{eq:12}
f(\mathbf{u},t)=\bE_{\theta_1}\left[\exp\left(-\int_t^Tr(U_s^N,s)ds\right)F(U_T^N)\bigg|U_t^N=\mathbf{u}\right]\qquad\qquad\quad
\end{align}
is the only solution to the following PDE
\begin{align*}
f_t+\frac{\sigma^2}{2}\sum_{k=1}^N\lambda_k^{-2\gamma}f_{u_ku_k}-\theta_1\sum_{k=1}^N\lambda_k^{2\beta}u_kf_{u_k}=r(\mathbf{u},t) f,\qquad f(\mathbf{u},T)=F(\mathbf{u}).
\end{align*}
By making the transformation
$$
f=\exp\left(\frac{\theta_1}{2}\sum_{k=1}^N\lambda_k^{2\beta}\left(\sigma^{-2}\lambda_k^{2\gamma}u_k^2-t\right)\right)
g\left(t,\sigma^{-1}\textrm{diag}[\lambda_1^\gamma,\ldots,\lambda_n^\gamma]\mathbf{u}\right),
$$
we obtain
\begin{align*}
g_t+\frac{1}{2}\Delta_{\mathbf{u}}g=&\frac{\epsilon(\theta_1^2-\theta_0^2)+\theta_1^2}{2}\left(\sum_{k=1}^N\lambda_k^{4\beta}u_k^2\right)g, \\
g(\mathbf{u},T)=&\exp\left(-\frac{\epsilon(\theta_1-\theta_0)+\theta_1}{2}\sum_{k=1}^N\lambda_k^{2\beta}u_k^2+\frac{\epsilon(\theta_1-\theta_0)+\theta_1}{2}MT\right).
\end{align*}
As it turns out, this PDE can be solved explicitly (with a good initial guess of the form of the solution), and consequently, we get an explicit formula for $f$,
\begin{align}\label{eq:13}
f(\mathbf{u},t)=&\exp\left(\sum_{k=1}^N\alpha_ku_k^2\left[\sinh(\gamma_kt)
                +\beta_k\cosh(\gamma_kt)\right]\left[\cosh(\gamma_kt)+\beta_k\sinh(\gamma_kt)\right]^{-1}\right.\notag \\
&\left.+\frac{1}{2}\sum_{k=1}^N\ln\left|\frac{\cosh(\gamma_kT)+\beta_k\sinh(\gamma_kT)}{\cosh(\gamma_kt)+\beta_k\sinh(\gamma_kt)}\right| +\frac{\theta_1}{2\sigma^2}\sum_{k=1}^N\lambda_k^{2\beta+2\gamma}u_k^2+\frac{\epsilon(\theta_1-\theta_0)+\theta_1}{2}MT-\frac{\theta_1}{2}Mt\right),
\end{align}
where
\begin{align*}
\alpha_k=&-\sigma^{-2}\left(\epsilon(\theta_1^2-\theta_0^2)+\theta_1^2\right)^{1/2}\lambda_k^{2\beta+2\gamma}/2, &&\gamma_k=-\left(\epsilon(\theta_1^2-\theta_0^2)+\theta_1^2\right)^{1/2}\lambda_k^{2\beta}, \\
\beta_k=&\frac{p\cosh(\gamma_kT)-\sinh(\gamma_kT)}{\cosh(\gamma_kT)-p\sinh(\gamma_kT)}, &&p=\frac{\epsilon(\theta_1-\theta_0)+\theta_1}{\left(\epsilon(\theta_1^2-\theta_0^2)+\theta_1^2\right)^{1/2}}.
\end{align*}
By taking $\mathbf{u}=t=0$ in \eqref{eq:12},  and using \eqref{eq:13}, we obtain
\begin{align}\label{eq:34}
m_T(\epsilon)=&\bE_{\theta_1}\left[\exp\left(\epsilon\ln L(\theta_0,\theta_1,U_T^N)\right)\right]=f(0,0)\notag
\\
=&\exp\left[-\frac{1}{2}\sum_{k=1}^N\ln\left(\cosh(\gamma_kT)-p\sinh(\gamma_kT)\right)+\frac{\epsilon(\theta_1-\theta_0)+\theta_1}{2}MT\right].
\end{align}
Note that,
\begin{align*}
c(\epsilon):=\lim_{T\to\infty}T^{-1}\ln m_T(\epsilon)=\left(-\left(\epsilon(\theta_1^2-\theta_0^2)+\theta_1^2\right)^{1/2}+\epsilon(\theta_1-\theta_0)+\theta_1\right)\frac{M}{2},
\end{align*}
and, also it is easy to see that $c(\epsilon)$ is proper and convex with\footnote{See Definition~\ref{def:condM} for notations $\epsilon_\pm,\gamma_\pm,\gamma_0$. }
\begin{align*}
\epsilon_-=&-\frac{\theta_1^2}{\theta_1^2-\theta_0^2},\qquad\epsilon_+=+\infty, \\
\eta_-=&-\infty,\qquad\eta_0=\frac{(\theta_1-\theta_0)^2}{4\theta_1}M,\qquad\eta_+=\frac{\theta_1-\theta_0}{2}M.
\end{align*}
Thus, $m_t(\varepsilon)$ satisfies Condition~\textbf{(m)}, Definition~\ref{def:condM}, with $\varphi_T=T$.

Having these, and by applying Theorem~\ref{th:Chernoff}, we get immediately   the following result
\begin{proposition}\label{prop:LargDev1}
The Log-Likelihood function satisfies the following identities
\begin{align}
&\lim_{T\to\infty}T^{-1}\ln \bP\left(T^{-1}\ln L(\theta_0,\theta_1,U_T^N)\geq\eta\right)=-I(\eta), &&\eta\in\left(\frac{(\theta_1-\theta_0)^2}{4\theta_1}M,\frac{\theta_1-\theta_0}{2}M\right),\notag
\\
&\lim_{T\to\infty}T^{-1}\ln \bP\left(T^{-1}\ln L(\theta_0,\theta_1,U_T^N)\leq\eta\right)=-I(\eta), &&\eta\in\left(-\infty,\frac{(\theta_1-\theta_0)^2}{4\theta_1}M\right),\label{eq:22}
\end{align}
where $I$ is the Legendre-Fenchel transform that takes the following form
\begin{align}\label{eq:23}
I(\eta)=\underset{\epsilon>\epsilon_-}{\sup}(\epsilon\eta-c(\epsilon)) =\left\{\begin{array}{ccc}\displaystyle-\frac{(4\theta_1\eta-(\theta_1-\theta_0)^2M)^2}{8(2\eta-(\theta_1-\theta_0)M) (\theta_1^2-\theta_0^2)},&\eta<\displaystyle\frac{\theta_1-\theta_0}{2}M\\+\infty,&\eta\ge\displaystyle\frac{\theta_1-\theta_0}{2}M\end{array}\right..
\end{align}
\end{proposition}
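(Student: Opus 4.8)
The plan is to deduce the two identities directly from Theorem~\ref{th:Chernoff}, once it is recorded that the cumulant generating function $m_T(\epsilon)$ of the Log-Likelihood ratio, computed explicitly in \eqref{eq:34}, together with its scaled limit $c(\epsilon)$, fulfils Condition~\textbf{(m)} of Definition~\ref{def:condM} with normalizing function $\varphi_T=T$. Most of this has already been verified in the paragraph preceding the statement: $c$ is finite on $(\epsilon_-,\epsilon_+)$ with $\epsilon_-=-\theta_1^2/(\theta_1^2-\theta_0^2)$, $\epsilon_+=+\infty$, it is proper and convex there, and the announced quantities $\eta_-,\eta_0,\eta_+$ are its (one-sided) derivative limits. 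The only items I would still spell out are: (i) $c''(\epsilon)>0$ on $(\epsilon_-,+\infty)$, so that $c'$ is a strictly increasing bijection onto $(\eta_-,\eta_+)=(-\infty,\tfrac{\theta_1-\theta_0}{2}M)$ with $c'(0)=\eta_0$; and (ii) the finiteness of $m_T(\epsilon)$ for $\epsilon$ in a neighbourhood of the relevant range, which follows from \eqref{eq:34} because for $\epsilon>0$ one has $\gamma_kT<0$ and $p>0$, hence $\cosh(\gamma_kT)-p\sinh(\gamma_kT)=\cosh(|\gamma_k|T)+p\sinh(|\gamma_k|T)>0$. With these in hand, Theorem~\ref{th:Chernoff} yields \eqref{eq:22} with rate function $I=c^{*}$, the Legendre--Fenchel transform of $c$, on exactly the ranges $\eta\in(\eta_0,\eta_+)$ for the upper tail and $\eta\in(-\infty,\eta_0)$ for the lower tail.

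It then remains to evaluate $I(\eta)=\sup_{\epsilon>\epsilon_-}(\epsilon\eta-c(\epsilon))$ in closed form. Since $\epsilon\mapsto\epsilon\eta-c(\epsilon)$ is strictly concave and, for $\eta<\eta_+=\tfrac{\theta_1-\theta_0}{2}M$, attains its maximum at the unique solution $\epsilon=\epsilon(\eta)$ of $c'(\epsilon)=\eta$, I would set $s:=\sqrt{\epsilon(\theta_1^2-\theta_0^2)+\theta_1^2}$. Writing $c'(\epsilon)=\tfrac{M}{2}\!\left(\theta_1-\theta_0-\tfrac{\theta_1^2-\theta_0^2}{2s}\right)$ and solving $c'(\epsilon)=\eta$ gives $s=\dfrac{(\theta_1^2-\theta_0^2)M}{2\big((\theta_1-\theta_0)M-2\eta\big)}$, together with $\epsilon=\dfrac{s^2-\theta_1^2}{\theta_1^2-\theta_0^2}$. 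Substituting into $\epsilon\eta-c(\epsilon)$ and simplifying, first to the compact form $I(\eta)=\dfrac{M(s-\theta_1)^2}{4s}$ and then re-expressing $s$ through $s-\theta_1=\dfrac{4\theta_1\eta-(\theta_1-\theta_0)^2M}{2\big((\theta_1-\theta_0)M-2\eta\big)}$, produces
\[
I(\eta)=-\frac{\big(4\theta_1\eta-(\theta_1-\theta_0)^2M\big)^2}{8\big(2\eta-(\theta_1-\theta_0)M\big)(\theta_1^2-\theta_0^2)},\qquad \eta<\tfrac{\theta_1-\theta_0}{2}M,
\]
which is the formula in \eqref{eq:23}. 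For $\eta\ge\eta_+$ one reads off from $c(\epsilon)\sim\tfrac{M(\theta_1-\theta_0)}{2}\epsilon-\tfrac{M}{2}\sqrt{(\theta_1^2-\theta_0^2)\epsilon}$ as $\epsilon\to+\infty$ that $\epsilon\eta-c(\epsilon)\to+\infty$, so $I(\eta)=+\infty$ there.

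The argument is largely mechanical once Theorem~\ref{th:Chernoff} is invoked; the place that calls for care is the bookkeeping in the Legendre--Fenchel computation. In particular one must check that the critical point $\epsilon(\eta)$ genuinely lies in $(\epsilon_-,+\infty)$ for every $\eta<\eta_+$ — equivalently $s>0$, i.e.\ $(\theta_1-\theta_0)M-2\eta>0$ — and that $\epsilon=0$ corresponds precisely to $\eta=\eta_0$, i.e.\ $c'(0)=\eta_0$; this last fact is what pins the split of the two tail regimes at $\eta_0=\tfrac{(\theta_1-\theta_0)^2}{4\theta_1}M$ and is responsible for the sign of the bracket in $I$. Finally, since $m_T(\epsilon)$ in \eqref{eq:34} was computed for the Ornstein--Uhlenbeck system started at $U_0=0$, no extra care is needed here; had one taken $U_0\in L^2(G)$, the Gaussian shift would alter $m_T$ only by an $e^{o(T)}$ factor and hence would leave $c$ and $I$ unchanged.
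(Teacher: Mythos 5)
Your proposal is correct and follows exactly the route the paper intends: verify Condition~\textbf{(m)} for $c(\epsilon)=\lim_{T\to\infty}T^{-1}\ln m_T(\epsilon)$ (with $\varphi_T=T$), invoke Theorem~\ref{th:Chernoff}, and evaluate the Legendre--Fenchel transform. The paper's own ``proof'' is just the sentence preceding the statement, which records $c$, $\epsilon_\pm$, $\eta_-,\eta_0,\eta_+$ and then says the result follows immediately from Theorem~\ref{th:Chernoff}; you usefully spell out what is left implicit there: the strict convexity $c''(\epsilon)=\tfrac{M(\theta_1^2-\theta_0^2)^2}{8s^3}>0$ guaranteeing a unique maximizer, the explicit solution $s=\tfrac{(\theta_1^2-\theta_0^2)M}{2((\theta_1-\theta_0)M-2\eta)}$ of $c'(\epsilon)=\eta$, the compact identity $I(\eta)=\tfrac{M(s-\theta_1)^2}{4s}$, and the match at the split point $c'(0)=\eta_0=\tfrac{(\theta_1-\theta_0)^2}{4\theta_1}M$. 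I checked the algebra (including $s-\theta_1=\tfrac{4\theta_1\eta-(\theta_1-\theta_0)^2M}{2((\theta_1-\theta_0)M-2\eta)}$ and the sign conventions matching the denominator $2\eta-(\theta_1-\theta_0)M$ in \eqref{eq:23}) and it is correct; the $\eta\ge\eta_+$ case is also handled appropriately.
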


Using the above result, we get the rates of convergence of the power of the test.
\begin{theorem}\label{th:asympT1-simple}
\begin{align*}
\bP^{N,T}_{\theta_1}(R_T^\sharp)=1-\exp\left(-\frac{(\theta_1-\theta_0)^2}{4\theta_0}MT+o(T)\right).
\end{align*}
\end{theorem}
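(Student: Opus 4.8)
The plan is to rewrite the complement of the power as a lower-tail large-deviation probability for the normalized log-likelihood ratio and then to read off its rate from Proposition~\ref{prop:LargDev1}. First I would note that, since $R_T^\sharp = \{U_T^N : L(\theta_0,\theta_1,U_T^N)\ge c_\alpha^\sharp(T)\}$,
$$
1-\bP^{N,T}_{\theta_1}(R_T^\sharp)=\bP^{N,T}_{\theta_1}\big(L(\theta_0,\theta_1,U_T^N)<c_\alpha^\sharp(T)\big)=\bP^{N,T}_{\theta_1}\big(T^{-1}\ln L(\theta_0,\theta_1,U_T^N)<\eta_T\big),
$$
where $\eta_T:=T^{-1}\ln c_\alpha^\sharp(T)=-\frac{(\theta_1-\theta_0)^2}{4\theta_0}M-\frac{\theta_1^2-\theta_0^2}{2\theta_0}\sqrt{\frac{M}{2\theta_0 T}}\,q_\alpha$. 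In particular $\eta_T\to\eta^*:=-\frac{(\theta_1-\theta_0)^2}{4\theta_0}M$ as $T\to\infty$; and since $\theta_0,\theta_1>0$ we have $\eta^*<0<\frac{(\theta_1-\theta_0)^2}{4\theta_1}M=\eta_0$, so $\eta^*$ lies strictly inside the lower-tail regime of Proposition~\ref{prop:LargDev1}, and moreover strictly below $\eta_+=\frac{\theta_1-\theta_0}{2}M$, so $I(\eta^*)$ is given by the finite branch of \eqref{eq:23}. I would also point out that the cumulant generating function $m_T$ in Section~\ref{ssec:CumFuncT} was computed as $\bE_{\theta_1}[\,\cdot\,]$, hence the large-deviation identities \eqref{eq:22} hold precisely under the alternative $\bP^{N,T}_{\theta_1}$, which is exactly the measure relevant for the power of the test.

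Next I would handle the only genuine (and mild) complication, namely that the threshold $\eta_T$ moves with $T$: it differs from $\eta^*$ by a term of order $T^{-1/2}$ whose sign depends on the sign of $q_\alpha$. This is dealt with by a sandwich. Fix $\epsilon>0$ small enough that $\eta^*-2\epsilon$ and $\eta^*+\epsilon$ still lie in $(-\infty,\eta_0)$ (possible since $\eta^*<0<\eta_0$). For all $T$ large enough that $|\eta_T-\eta^*|<\epsilon$ one has the inclusions $\{T^{-1}\ln L\le\eta^*-2\epsilon\}\subset\{T^{-1}\ln L<\eta_T\}\subset\{T^{-1}\ln L\le\eta^*+\epsilon\}$, hence
$$
\bP^{N,T}_{\theta_1}\big(T^{-1}\ln L\le\eta^*-2\epsilon\big)\le 1-\bP^{N,T}_{\theta_1}(R_T^\sharp)\le\bP^{N,T}_{\theta_1}\big(T^{-1}\ln L\le\eta^*+\epsilon\big).
$$
Applying $T^{-1}\ln(\cdot)$, taking $\liminf$ and $\limsup$, and invoking \eqref{eq:22} gives $-I(\eta^*-2\epsilon)\le\liminf_{T}T^{-1}\ln\big(1-\bP^{N,T}_{\theta_1}(R_T^\sharp)\big)$ and $\limsup_{T}T^{-1}\ln\big(1-\bP^{N,T}_{\theta_1}(R_T^\sharp)\big)\le-I(\eta^*+\epsilon)$. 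Letting $\epsilon\downarrow0$ and using continuity of $I$ at $\eta^*$ (immediate from \eqref{eq:23}), the $\liminf$ and $\limsup$ coincide and equal $-I(\eta^*)$.

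It then remains only to evaluate $I(\eta^*)$ by substituting $\eta=\eta^*$ into \eqref{eq:23}: the numerator factor becomes $4\theta_1\eta^*-(\theta_1-\theta_0)^2M=-(\theta_1-\theta_0)^2M\,(\theta_1+\theta_0)/\theta_0$ and the denominator factor $2\eta^*-(\theta_1-\theta_0)M=-(\theta_1-\theta_0)M\,(\theta_1+\theta_0)/\theta_0$, and after simplification $I(\eta^*)=\frac{(\theta_1-\theta_0)^2}{4\theta_0}M$. Combining with the previous step, $T^{-1}\ln\big(1-\bP^{N,T}_{\theta_1}(R_T^\sharp)\big)=-\frac{(\theta_1-\theta_0)^2}{4\theta_0}M+o(1)$, i.e. $1-\bP^{N,T}_{\theta_1}(R_T^\sharp)=\exp\big(-\frac{(\theta_1-\theta_0)^2}{4\theta_0}MT+o(T)\big)$, which is the asserted formula. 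I do not expect a serious obstacle here: the one point requiring care is the bookkeeping for the moving threshold together with checking that $\eta^*$ (and a small neighborhood of it) sits inside the range where Proposition~\ref{prop:LargDev1} applies; everything else is a direct substitution into \eqref{eq:23}.
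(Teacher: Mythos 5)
Your proposal is correct and follows essentially the same route as the paper: rewrite the Type~II error as the lower-tail probability $\bP^{N,T}_{\theta_1}\left(T^{-1}\ln L(\theta_0,\theta_1,U_T^N)\le\eta_T\right)$, apply the large-deviation bounds of Proposition~\ref{prop:LargDev1}, and evaluate the rate function at $\eta^*=-\frac{(\theta_1-\theta_0)^2}{4\theta_0}M$. A small improvement over the paper's exposition: your two-sided sandwich handles the $T$-dependent threshold $\eta_T$ uniformly and needs no assumption on the sign of $q_\alpha$, whereas the paper's lower bound invokes $q_\alpha<0$, implicitly assuming $\alpha<1/2$. One arithmetic slip in a displayed intermediate step: the denominator factor should be $2\eta^*-(\theta_1-\theta_0)M=-(\theta_1-\theta_0)(\theta_1+\theta_0)M/(2\theta_0)$, not $-(\theta_1-\theta_0)(\theta_1+\theta_0)M/\theta_0$; your stated final value $I(\eta^*)=\frac{(\theta_1-\theta_0)^2}{4\theta_0}M$ is nonetheless the correct one.
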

\begin{proof}
It is equivalent to show that
$$
\lim_{T\to\infty}T^{-1}\ln\left(1-\bP^{N,T}_{\theta_1}(R_T^\sharp)\right)=-\frac{(\theta_1-\theta_0)^2}{4\theta_0}M.
$$
By the definition we have
\begin{align*}
1-\bP^{N,T}_{\theta_1}(R_T^\sharp)=&1-\bP^{N,T}_{\theta_1}\left(\ln L(\theta_0,\theta_1,U_T^N)\ge\ln c_{\alpha}^\sharp\right) \\
=&\bP^{N,T}_{\theta_1}\left(T^{-1}\ln L(\theta_0,\theta_1,U_T^N)\le-\frac{(\theta_1-\theta_0)^2}{4\theta_0}M
-\frac{\theta_1^2-\theta_0^2}{2\theta_0}\sqrt{\frac{M}{2\theta_0T}}q_{\alpha}\right).
\end{align*}
Note that for small $\alpha$, $q_{\alpha}<0$, and thus, using \eqref{eq:22} and \eqref{eq:23}, we deduce
\begin{align*}
\liminf_{T\to\infty}T^{-1}\ln\left(1-\bP^{N,T}_{\theta_1}(R_T^\sharp)\right)
&\geq \lim_{T\to\infty}T^{-1}\ln\bP^{N,T}_{\theta_1}\left(T^{-1}\ln L(\theta_0,\theta_1,U_T^N)\le-\frac{(\theta_1-\theta_0)^2}{4\theta_0}M\right)\\
& =-\frac{(\theta_1-\theta_0)^2}{4\theta_0}M.
\end{align*}
On the other hand, for any $\delta>0$, we have that
\begin{align*}
\limsup_{T\to\infty}T^{-1}\ln\left(1-\bP^{N,T}_{\theta_1}(R_T^\sharp)\right)
\le&\lim_{T\to\infty}T^{-1}\ln\bP^{N,T}_{\theta_1}\left(T^{-1}\ln L(\theta_0,\theta_1,U_T^N)\le-\frac{(\theta_1-\theta_0)^2}{4\theta_0}M+\delta\right) \\
=&-\frac{((\theta_1^2-\theta_0^2)(\theta_1-\theta_0)M/\theta_0-4\theta_1\delta)^2}{4(\theta_1^2-\theta_0^2)((\theta_1^2-\theta_0^2)M/\theta_0-4\delta)}.
\end{align*}
Passing to the limit in the last inequality with $\delta\to0^+$, we get
\begin{align*}
\limsup_{T\to\infty}T^{-1}\ln\left(1-\bP^{N,T}_{\theta_1}(R_T^\sharp)\right)\le-\frac{(\theta_1-\theta_0)^2}{4\theta_0}M.
\end{align*}
The theorem is proved.
\end{proof}

\subsubsection{Sharp large deviation principle}\label{sec:sharpLargeDevT}
Theorem~\ref{th:asympT1-simple} essentially implies that the probability of $R_T^\sharp$ under $\mathscr{H}_1$ goes exponentially fast to 1, as $T\to\infty$, with the rate of convergence $\frac{(\theta_1-\theta_0)^2}{4\theta_0}M$.
However, this result is not sufficient to answer whether $(R_T^\sharp)$ has the highest asymptotic power, for which we need more precise convergence results (i.e. of the higher order term $o(T)$). Next, we will introduce some results on sharp large deviations for the stochastic processes relevant to this study. The ideas are similar to those from \citet{BercuRouault2001}.
The aim is to extract the exponential part from the asymptotics of Type~II error. Namely, for the function
$$
\mathcal{L}_T(\epsilon)  =T^{-1}\ln\bE_{\theta_1}\left[\exp\left(\epsilon\ln L(\theta_0,\theta_1,U_T^N)\right)\right],
$$
we are looking for decomposition of the form
$$
\mathcal{L}_T(\epsilon)  = \mathcal{L}(\epsilon)+T^{-1}\mathcal{H}(\epsilon)+T^{-1}\mathcal{R}_T(\epsilon),
$$
with some appropriate functions $\mathcal{L}(\epsilon),  \mathcal{H}(\epsilon), \mathcal{R}_T(\epsilon)$ derived below.

Let us consider the Ornstein-Uhlenbeck process
\begin{align*}
d u_k(t) = -\theta_1 \lambda_k^{2\beta} u_k(t)d t + \sigma \lambda_k^{-\gamma} d w_k(t), \quad  u_k(0) = 0.
\end{align*}
 In what follows, we will use notations similar to those from Appendix~\ref{appendix:SLDforOU} with
\begin{align*}
a_k&:=-\epsilon(\theta_1-\theta_0)\lambda_k^{2\beta+2\gamma}/\sigma^2, \\
b_k& : =-\epsilon(\theta_1^2-\theta_0^2)\lambda_k^{4\beta+2\gamma}/2\sigma^2,
\end{align*}
and define
\begin{align}
\mathcal{L}^k(\epsilon)&:=\frac{\lambda_k^{2\beta}}{2}\left(\theta_1+(\theta_1-\theta_0)\epsilon-\sqrt{\theta_1^2+(\theta_1^2-\theta_0^2)\epsilon}\right),\nonumber\\
\mathcal{D}(\epsilon)&:=\frac{\theta_1+(\theta_1-\theta_0)\epsilon}{\sqrt{\theta_1^2+(\theta_1^2-\theta_0^2)\epsilon}}, \
\mathcal{H}^k(\epsilon):=-\frac{1}{2}\ln\left(\frac{1}{2}+\frac{1}{2}\mathcal{D}(\epsilon)\right), \nonumber\\
\mathcal{R}^k_T(\epsilon)&:=-\frac{1}{2}\ln\left(1+\frac{1-\mathcal{D}(\epsilon)}{1+\mathcal{D}(\epsilon)}
\exp\left(-2\lambda_k^{2\beta}T\sqrt{\theta_1^2+(\theta_1^2-\theta_0^2)\epsilon}\right)\right),  \label{eq:notationLHR}\\
\cL(\epsilon)&: =\sum_{k=1}^N \cL^k(\epsilon), \
\cH(\epsilon): =\sum_{k=1}^N \cH^k(\epsilon), \
\cR_T(\epsilon): =\sum_{k=1}^N \cR_T^k(\epsilon). \nonumber
\end{align}
Using the independence of $u_k$'s and Proposition~\ref{lemma:BercueSLD}, we have
\begin{align}\label{MomentLLRSplit}
\mathcal{L}_T(\epsilon) & :=T^{-1}\ln\bE_{\theta_1}\left[\exp\left(\epsilon\ln L(\theta_0,\theta_1,U_T^N)\right)\right]\notag \\
&=T^{-1}\sum_{k=1}^N\ln\bE_{\theta_1}\left[\exp\left(\mathcal{Z}_T(a_k,b_k)\right)\right]\notag \\
&=\sum_{k=1}^N\left(\mathcal{L}^k(\epsilon)+T^{-1}\mathcal{H}^k(\epsilon)+T^{-1}\mathcal{R}^k_T(\epsilon)\right)\notag\\
&=\mathcal{L}(\epsilon)+T^{-1}\mathcal{H}(\epsilon)+T^{-1}\mathcal{R}_T(\epsilon).
\end{align}
We also set
\begin{align}\label{eq:29}
A_T & :=\exp\left[T(\mathcal{L}_T(\epsilon_{\eta})-\eta\epsilon_{\eta})\right],\notag \\
B_T & :=\bE_T\left(\exp\left[-\epsilon_{\eta}(\ln L(\theta_0,\theta_1,U_T^N)-\eta T)\right]\1_{\{\ln L(\theta_0,\theta_1,U_T^N)\le\eta^* T\}}\right),
\end{align}
where $\eta$ and $\eta^*$ are some numbers which may depend on $T$, and $\bE_T$ is the expectation under $\bQ_T$ with
\begin{align}\label{eq:MeasureChange1}
\frac{d\mathbb{Q}_T}{d\bP^{N,T}_{\theta_1}}=\exp\left(\epsilon_{\eta}\ln L(\theta_0,\theta_1,U_T^N)-T\mathcal{L}_T(\epsilon_{\eta})\right).
\end{align}
Clearly,
$$
\bP^{N,T}_{\theta_1}\left(\ln L(\theta_0,\theta_1,U_T^N)\le\eta^* T\right)=A_TB_T.
$$
Naturally, by taking $\epsilon_{\eta}$ such that $\mathcal{L}'(\epsilon_{\eta})=\eta$, we get
\begin{align}\label{eq:epsilonN}
\epsilon_{\eta}=\frac{(\theta_1^2-\theta_0^2)^2M^2-4\theta_1^2(-2\eta+(\theta_1-\theta_0)M)^2} {4(\theta_1^2-\theta_0^2)(-2\eta+(\theta_1-\theta_0)M)^2},
\end{align}
and then by direct computations we find
\begin{align}\label{eq:ATVariate}
A_T=&\exp\left[T(\mathcal{L}(\epsilon_{\eta})-\eta\epsilon_{\eta})\right]
\exp\left[\mathcal{H}(\epsilon_{\eta})+\mathcal{R}_T(\epsilon_{\eta})\right]\notag\\
=&\exp\left(-I(\eta)T\right)\left(\frac{1}{2}+\frac{1}{2}\mathcal{D}(\epsilon_{\eta})\right)^{-N/2}
\prod_{k=1}^N\left(1+\frac{1-\mathcal{D}(\epsilon_{\eta})}{1+\mathcal{D}(\epsilon_{\eta})}
\exp\left(-2\lambda_k^{2\beta}T\sqrt{\theta_1^2+(\theta_1^2-\theta_0^2)\epsilon_{\eta}}\right)\right)^{-1/2},
\end{align}
where $I(\eta)$ is given by \eqref{eq:23}.

\begin{lemma}\label{lemma:CharExpan-VT}
Under probability measure $\bQ_T$, the following holds true
\begin{align*}
V_T:=\frac{\ln L(\theta_0,\theta_1,U_T^N)-\eta T}{\varsigma_{\eta}\sqrt{T}}\overset{d}{\longrightarrow}\mathcal{N}(0,1), \quad T\to\infty,
\end{align*}
where $\eta<\frac{(\theta_1-\theta_0)^2}{4\theta_1}M$, and
\begin{equation*}
\varsigma_{\eta}^2:=\mathcal{L}''(\epsilon_{\eta})=\frac{(-2\eta+(\theta_1-\theta_0)M)^3}{(\theta_1^2-\theta_0^2)M^2}.
\end{equation*}
More precisely, for any integer $n>0$, there exist integers $s_1(n)$, $s_2(n)$, $s_3(n)$, and a sequence $(\varphi_{k,l})$ independent of $n$, such that, for large enough $T$, the characteristic function of $V_T$ under measure $\bQ_T$, denoted by $\Psi_{V_T}$, has the following expansion
\begin{align}\label{eq:CharExpan-VT}
\Psi_{V_T}(u)=\exp\left(-\frac{u^2}{2}\right)\left[1+\frac{1}{\sqrt{T}}\sum_{k=0}^{n}\sum_{l=k+1}^{s_1(n)} \frac{\varphi_{k,l}u^l}{(\sqrt{T})^k}+O\left(\frac{|u|^{s_2(n)}+|u|^{s_3(n)}}{T^{(n+2)/2}}\right)\right].
\end{align}
Moreover, the remainder is uniformly bounded in $u$ as long as $|u|=O(T^{1/6})$\footnote{That is, there exists $M\in\bR$, such that  $\left|\frac{O\left(\frac{|u|^{s_2(n)}+|u|^{s_3(n)}}{T^{(n+2)/2}}\right)}{\frac{|u|^{s_2(n)}+|u|^{s_3(n)}}{T^{(n+2)/2}}}\right|\leq M$, for $\frac{|u|}{T^{1/6}}\leq c$ with $c\in\bR$.}.
\end{lemma}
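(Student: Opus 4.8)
The plan is to obtain $\Psi_{V_T}$ in closed form from the change of measure \eqref{eq:MeasureChange1} and then to Edgeworth–expand the resulting deterministic function, in the spirit of \citet{BercuRouault2001}. Since $\ln L(\theta_0,\theta_1,U_T^N)$ is a quadratic functional of the Gaussian process $(u_1,\ldots,u_N)$ (see \eqref{eq:11}), the map $z\mapsto\bE_{\theta_1}\left[\exp\left(z\ln L(\theta_0,\theta_1,U_T^N)\right)\right]$ is finite and analytic on a complex strip whose interior contains $\epsilon_\eta$, and there it equals $\exp\left(T\mathcal L_T(z)\right)$ by analytic continuation of the definition of $\mathcal L_T$. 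Plugging the Radon–Nikodym density into $\Psi_{V_T}(u)=\bE_T[e^{iuV_T}]$, factoring out the deterministic part and using this analyticity gives, for every $u\in\bR$,
\begin{align*}
\Psi_{V_T}(u)
&=\bE_{\theta_1}\left[\exp\left(\epsilon_\eta\ln L(\theta_0,\theta_1,U_T^N)-T\mathcal L_T(\epsilon_\eta)+\frac{iu\bigl(\ln L(\theta_0,\theta_1,U_T^N)-\eta T\bigr)}{\varsigma_\eta\sqrt T}\right)\right] \\
&=\exp\left(T\mathcal L_T\!\left(\epsilon_\eta+\frac{iu}{\varsigma_\eta\sqrt T}\right)-T\mathcal L_T(\epsilon_\eta)-\frac{iu\eta\sqrt T}{\varsigma_\eta}\right).
\end{align*}

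Next I would insert the decomposition $T\mathcal L_T(\epsilon)=T\mathcal L(\epsilon)+\mathcal H(\epsilon)+\mathcal R_T(\epsilon)$ coming from \eqref{MomentLLRSplit}. By \eqref{eq:notationLHR} the term $\mathcal R_T$, and its value at $\epsilon_\eta+\tfrac{iu}{\varsigma_\eta\sqrt T}$, is exponentially small in $T$, hence negligible for any finite–order expansion. For the main part I Taylor–expand $\mathcal L$ and $\mathcal H$ about $\epsilon_\eta$: because $\epsilon_\eta$ and $\varsigma_\eta$ were chosen precisely so that $\mathcal L'(\epsilon_\eta)=\eta$ and $\mathcal L''(\epsilon_\eta)=\varsigma_\eta^2$ (cf. \eqref{eq:epsilonN}), the $O(\sqrt T)$ contribution of $\mathcal L$ cancels the term $-\tfrac{iu\eta\sqrt T}{\varsigma_\eta}$, the quadratic term produces $-u^2/2$, and for $j\ge 3$ the derivative $\mathcal L^{(j)}(\epsilon_\eta)$ contributes $\frac{\mathcal L^{(j)}(\epsilon_\eta)}{j!\,\varsigma_\eta^j}(iu)^j T^{-(j-2)/2}$, while the expansion of $\mathcal H$ adds only monomials $u^l T^{-k/2}$ with $k\ge 1$. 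This yields $\Psi_{V_T}(u)=e^{-u^2/2}\exp\bigl(g_T(u)\bigr)$, where $g_T(u)$ is a finite sum of monomials $c_{k,l}\,u^l T^{-k/2}$ with $k\ge 1$ and $l\ge k+1$ (the bound $l\ge k+1$ being inherited from $\mathcal H^{(j)}\!\propto u^j T^{-j/2}$ and $\mathcal L^{(j)}\!\propto u^j T^{-(j-2)/2}$, products only raising the degree), plus a Taylor remainder of lower order. Expanding $\exp\bigl(g_T(u)\bigr)=\sum_{m\ge 0}g_T(u)^m/m!$ and regrouping by powers of $T^{-1/2}$, truncated at order $n$, produces precisely \eqref{eq:CharExpan-VT}, with the $\varphi_{k,l}$ universal polynomials in $\mathcal L^{(j)}(\epsilon_\eta)$, $\mathcal H^{(j)}(\epsilon_\eta)$ and $\varsigma_\eta^{-1}$, and $s_1(n),s_2(n),s_3(n)$ fixed by the truncation combinatorics (one checks $s_1(n)\sim 3(n+1)$, $s_2(n)\sim n+2$, $s_3(n)\sim 3(n+2)$). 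Taking $n=1$ and $u$ fixed gives $\Psi_{V_T}(u)\to e^{-u^2/2}$, so $V_T\overset{d}{\longrightarrow}\mathcal N(0,1)$ by L\'evy's continuity theorem.

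The step I expect to be delicate is the uniform control of the remainder on the expanding range $|u|=O(T^{1/6})$. For this I would fix a small $\rho>0$ so that the principal branch of $z\mapsto\sqrt{\theta_1^2+(\theta_1^2-\theta_0^2)z}$ — well defined near $\epsilon_\eta$ since $\theta_1^2+(\theta_1^2-\theta_0^2)\epsilon_\eta>0$ exactly when $\eta<\tfrac{(\theta_1-\theta_0)^2}{4\theta_1}M$ — and hence $\mathcal L$, $\mathcal H$, $\mathcal R_T$ are analytic on the disc $\{|z-\epsilon_\eta|<\rho\}$, with $\mathcal R_T\to 0$ exponentially. For $|u|\le cT^{1/6}$ and $T$ large the shift $\tfrac{iu}{\varsigma_\eta\sqrt T}$ has modulus $O(T^{-1/3})<\rho$, so the finite–order Taylor expansions of $\mathcal L$ and $\mathcal H$ about $\epsilon_\eta$ are valid with remainders controlled via Cauchy's estimates and of strictly lower order in $T^{-1/2}$ than those retained; the residual terms — from these Taylor remainders and from truncating $\sum_m g_T(u)^m/m!$ at the order matching \eqref{eq:CharExpan-VT} — then combine, after multiplication by the retained polynomial part and by $e^{-u^2/2}$, into an error of the stated form $O\bigl((|u|^{s_2(n)}+|u|^{s_3(n)})T^{-(n+2)/2}\bigr)$. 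The laborious part is the bookkeeping: tracking which monomials $u^l T^{-k/2}$ can occur (this is what yields the ranges $k+1\le l\le s_1(n)$) and verifying that the implied constants can be chosen independently of $u$ on $|u|\le cT^{1/6}$; the analyticity of $\mathcal L_T$ on a fixed complex neighbourhood of $\epsilon_\eta$, together with the exponential smallness of $\mathcal R_T$, is exactly what makes this go through.
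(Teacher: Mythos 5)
Your proposal follows essentially the same route as the paper's proof: you derive the identity $\Psi_{V_T}(u)=\exp\bigl(T\mathcal L_T(\epsilon_\eta+\tfrac{iu}{\varsigma_\eta\sqrt T})-T\mathcal L_T(\epsilon_\eta)-\tfrac{iu\eta\sqrt T}{\varsigma_\eta}\bigr)$ directly from the change of measure (this is exactly the paper's \eqref{eq:Char-VT}), insert the decomposition $T\mathcal L_T=T\mathcal L+\mathcal H+\mathcal R_T$ from \eqref{MomentLLRSplit}, discard the exponentially small $\mathcal R_T$ terms, Taylor-expand $\mathcal L$ and $\mathcal H$ about $\epsilon_\eta$ using $\mathcal L'(\epsilon_\eta)=\eta$ and $\mathcal L''(\epsilon_\eta)=\varsigma_\eta^2$ to recover $-u^2/2$, and then exponentiate the residual polynomial in the Cram\'er/Bercu--Rouault fashion to regroup by powers of $T^{-1/2}$; the paper does precisely this, citing \citet[Lemmas 6.1, 7.1]{BercuRouault2001} and \citet[Lemma 2, p.~72]{Cramer1970} rather than spelling it out. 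Your added discussion of analyticity of $\mathcal L_T$ on a complex neighbourhood of $\epsilon_\eta$ and the Cauchy-estimate control of the remainder for $|u|=O(T^{1/6})$ is a more explicit justification of what the paper asserts in one line ("from the proof it follows that $u^l/(\sqrt T)^k$ is bounded\ldots"), not a different method.
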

\begin{proof}
We follow the same lines of proof as in \citet[Lemma 6.1 and Lemma 7.1]{BercuRouault2001}.  From \eqref{eq:MeasureChange1} we immediately have
\begin{align}\label{eq:Char-VT}
\Psi_{V_T}(u)=\exp\left[-\frac{iu\eta\sqrt{T}}{\varsigma_{\eta}}+T\left(\mathcal{L}_T\left(\epsilon_{\eta} +\frac{iu}{\varsigma_{\eta}\sqrt{T}}\right)-\mathcal{L}_T(\epsilon_{\eta})\right)\right].
\end{align}
It follows from \eqref{eq:notationLHR} that for any $k\in\mathbb{N}$, $\mathcal{R}_T^{(k)}(\epsilon_{\eta})=O\left(T^{k-1}e^{-CT}\right)$, for some constant $C$.
Thereby, using \eqref{MomentLLRSplit}, we get
\begin{align}\label{eq:Deriv-LLR}
\mathcal{L}_T^{(k)}(\epsilon_{\eta})=\mathcal{L}^{(k)}(\epsilon_{\eta})+T^{-1}\mathcal{H}^{(k)}(\epsilon_{\eta})+O\left(T^ke^{-CT}\right).
\end{align}
Consequently, via \eqref{eq:Char-VT} together with \eqref{eq:Deriv-LLR}, we have the following expansion
\begin{align}\label{VCharLogExpan}
\ln\Psi_{V_T}(u)=-\frac{u^2}{2}+T\sum_{k=3}^{n+3}\left(\frac{iu}{\varsigma_{\eta}\sqrt{T}}\right)^k\frac{\mathcal{L}^{(k)}(\epsilon_{\eta})}{k!} +\sum_{k=1}^{n+1}\left(\frac{iu}{\varsigma_{\eta}\sqrt{T}}\right)^k\frac{\mathcal{H}^{(k)}(\epsilon_{\eta})}{k!} +O\left(\frac{|u|^{n+2}+|u|^{n+4}}{T^{(n+2)/2}}\right).
\end{align}
From here,  we follow the same approach as in \citet[Lemma 2 p.72]{Cramer1970}, and we obtain \eqref{eq:CharExpan-VT}.
Moreover, from the proof it follows that $u^l/(\sqrt{T})^k$ is bounded, provided that $|u|=O(T^{1/6})$, and consequently the last part of the Lemma follows.
\end{proof}

Next we provide an asymptotic result for the term $B_T$.
\begin{lemma}\label{lemma:BT-AsymEst} For
\begin{align*}
\eta=-\frac{(\theta_1-\theta_0)^2}{4\theta_0}M,\qquad \eta^*=-\frac{(\theta_1-\theta_0)^2}{4\theta_0}M-\frac{\theta_1^2-\theta_0^2}{2\theta_0}\sqrt{\frac{M}{2\theta_0T}}q_{\alpha},
\end{align*}
as $T\to\infty$, the following asymptotic holds true
\begin{align*}
B_T\sim \exp\left(-\frac{\theta_1^2-\theta_0^2}{2\theta_0}\sqrt{\frac{MT}{2\theta_0}}q_{\alpha}\right)/\sqrt{T}.
\end{align*}
\end{lemma}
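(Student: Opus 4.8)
The plan is to follow the sharp large deviation scheme of \citet{BercuRouault2001}: first pin down the constants $\epsilon_\eta$ and $\varsigma_\eta$ at the value of $\eta$ in the statement, then rewrite $B_T$ as a Laplace‑type functional of the (asymptotically Gaussian) variable $V_T$ of Lemma~\ref{lemma:CharExpan-VT}, and finally evaluate that functional by a local limit theorem. Throughout put $\phi(x)=(2\pi)^{-1/2}e^{-x^2/2}$.

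\textbf{Step 1 (constants and reduction).} For $\eta=-\frac{(\theta_1-\theta_0)^2}{4\theta_0}M$ one computes $-2\eta+(\theta_1-\theta_0)M=\frac{(\theta_1^2-\theta_0^2)M}{2\theta_0}$; inserting this in \eqref{eq:epsilonN} gives $\epsilon_\eta=-1$ (which lies in $(\epsilon_-,\epsilon_+)$ since $\epsilon_-=-\theta_1^2/(\theta_1^2-\theta_0^2)<-1$, and $\eta<\frac{(\theta_1-\theta_0)^2}{4\theta_1}M$ trivially, so Lemma~\ref{lemma:CharExpan-VT} applies), while the formula for $\varsigma_\eta^2$ there becomes $\varsigma_\eta^2=\frac{(\theta_1^2-\theta_0^2)^2M}{8\theta_0^3}$, i.e. $\varsigma_\eta=\frac{\theta_1^2-\theta_0^2}{2\theta_0}\sqrt{\frac{M}{2\theta_0}}$. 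Moreover $\mathcal{L}_T(\epsilon_\eta)=\mathcal{L}_T(-1)=0$ (from \eqref{eq:notationLHR}, $\mathcal{L}(-1)=\mathcal{H}(-1)=\mathcal{R}_T(-1)=0$, since $\mathcal{D}(-1)=1$; equivalently $d\bQ_T/d\bP^{N,T}_{\theta_1}=L^{-1}$, so $\bQ_T=\bP^{N,T}_{\theta_0}$). Since $\eta^*-\eta=-\varsigma_\eta q_\alpha T^{-1/2}$, the event $\{\ln L(\theta_0,\theta_1,U_T^N)\le\eta^*T\}$ equals $\{V_T\le -q_\alpha\}$, where $V_T=(\ln L(\theta_0,\theta_1,U_T^N)-\eta T)/(\varsigma_\eta\sqrt T)$. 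Hence, using $-\epsilon_\eta=1$,
\[
B_T=\bE_T\!\big(e^{\varsigma_\eta\sqrt T\,V_T}\,\1_{\{V_T\le -q_\alpha\}}\big)=\exp\!\Big(-\tfrac{\theta_1^2-\theta_0^2}{2\theta_0}\sqrt{\tfrac{MT}{2\theta_0}}\,q_\alpha\Big)\,J_T,\quad J_T:=\bE_T\!\big(e^{\varsigma_\eta\sqrt T\,(V_T+q_\alpha)}\,\1_{\{V_T+q_\alpha\le 0\}}\big),
\]
so it suffices to prove $J_T\sim T^{-1/2}$.

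\textbf{Step 2 (a Laplace integral against the law of $V_T$).} Under $\bQ_T=\bP^{N,T}_{\theta_0}$ the variable $V_T$ has a density $f_{V_T}$, because by \eqref{eq:11} $\ln L$ is an affine image of a nondegenerate positive quadratic functional of the Gaussian vector $(u_k(t))_{k\le N,\,t\le T}$. Substituting $V_T+q_\alpha=-u/(\varsigma_\eta\sqrt T)$,
\[
J_T=\frac{1}{\varsigma_\eta\sqrt T}\int_0^\infty e^{-u}\,f_{V_T}\!\Big(\tfrac{-u}{\varsigma_\eta\sqrt T}-q_\alpha\Big)\,du.
\]
If (i) $f_{V_T}\to\phi$ uniformly on compact sets and (ii) $\sup_{T}\sup_{x}f_{V_T}(x)<\infty$, then dominated convergence gives that the integral tends to $\phi(-q_\alpha)\int_0^\infty e^{-u}\,du=\phi(q_\alpha)$, whence $\sqrt T\,J_T\to\phi(q_\alpha)/\varsigma_\eta\in(0,\infty)$, i.e. $J_T\sim T^{-1/2}$, which proves the Lemma. (Alternatively, avoiding densities, an integration by parts turns $J_T$ into $\int_0^\infty e^{-u}\,\bQ_T\big(-q_\alpha-\tfrac{u}{\varsigma_\eta\sqrt T}\le V_T<-q_\alpha\big)\,du$, and one uses that the bracketed probability is $\phi(q_\alpha)\tfrac{u}{\varsigma_\eta\sqrt T}(1+o(1))$ for each fixed $u$, with dominating bound $C\big((u/\sqrt T)\wedge 1\big)$.)

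\textbf{Step 3 (the local CLT — the technical core).} Claims (i)--(ii) are obtained from the characteristic‑function expansion \eqref{eq:CharExpan-VT} of Lemma~\ref{lemma:CharExpan-VT} by Fourier inversion combined with Esseen's smoothing inequality, as in \citet[\S6--7]{BercuRouault2001}: on $|u|\le c\,T^{1/6}$ that expansion yields $\Psi_{V_T}(u)=e^{-u^2/2}(1+O(T^{-1/2}(|u|+|u|^{s_1})))$, which upon inversion produces $f_{V_T}(x)=\phi(x)+O(T^{-1/2})$ locally uniformly in $x$; the contribution of $|u|>c\,T^{1/6}$ to the inversion integral must be shown to be negligible, which is where the explicit closed forms \eqref{eq:13}--\eqref{eq:34} enter, since they allow one to check that the real part of $\mathcal{L}_T(\epsilon_\eta+\mathrm i v)-\mathcal{L}_T(\epsilon_\eta)$ is bounded above by a strictly negative constant for $v$ bounded away from $0$; and uniform boundedness of $f_{V_T}$ off compacts follows from the Gaussian‑type tail of $V_T$ inherited from the quadratic structure in \eqref{eq:11}. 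This passage from the characteristic‑function expansion to a uniform local limit theorem (with the domination needed in Step~2) is the main obstacle; granting it, Steps~1--2 conclude that $B_T\sim\exp\big(-\tfrac{\theta_1^2-\theta_0^2}{2\theta_0}\sqrt{\tfrac{MT}{2\theta_0}}\,q_\alpha\big)/\sqrt T$.
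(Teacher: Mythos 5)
Your proposal is correct and takes essentially the same approach as the paper: both factor $B_T$ as $e^{-\varsigma_\eta\sqrt{T}q_\alpha}$ times a quantity $\sim T^{-1/2}$, and both reduce the estimation of that remaining factor to the characteristic-function expansion of Lemma~\ref{lemma:CharExpan-VT} together with a uniform integrable bound on $\Psi_{V_T}$ (the analogue of \eqref{eq:CharExControl}). The only presentational difference is that the paper Fourier-inverts, applies Fubini, and evaluates the $v$-integral to land directly on $\frac{e^{-\varsigma_\eta\sqrt{T}q_\alpha}}{2\pi\varsigma_\eta\sqrt{T}}\int_\bR(1-\tfrac{iu}{\varsigma_\eta\sqrt{T}})^{-1}e^{iq_\alpha u}\Psi_{V_T}(u)\,du$ and closes by dominated convergence there, whereas you stay in the physical domain, writing $J_T$ as a Laplace integral against the density $f_{V_T}$ and invoking a uniform local CLT; the same characteristic-function control underlies both, and your route simply routes the dominated-convergence step through $f_{V_T}$ instead of $\Psi_{V_T}$.
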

\begin{proof} Similar to the notations in the Appendix~\ref{appendix:SLDforOU} and those from \citet{BercuRouault2001}, we put
$$
\mathcal{Z}_T^k := -\sigma^{-2}(\theta_1-\theta_0)\lambda_k^{2\beta+2\gamma}\int_0^Tu_k(t)du_k(t) -\frac{\sigma^{-2}}{2}(\theta_1^2-\theta_0^2)\lambda_k^{4\beta+2\gamma}\int_0^Tu_k^2(t)dt, \quad k=1,\ldots,N.
$$
Note that the results from  \citet{BercuRouault2001} hold true for each $\cZ_T^k$, and one can derive similar results for $\ln L(\theta_0,\theta_1,U_T^N)=\sum_{k=1}^N\cZ_T^k$. In particular, using \eqref{eq:Char-VT}, by similar\footnote{the evaluations are indeed similar, but lengthy, and for the sake of space conservation we omit them here.} evaluations as in \citet[Lemma~6.1]{BercuRouault2001}, we obtain, for $T$ large enough, the following estimate,
\begin{align}\label{eq:CharExControl}
|\Psi_{V_T}(u)| \le\left(1+\frac{\mu_1 u^2}{T}\right)^{-\mu_2 T},
\end{align}
where $\mu_1$,  $\mu_2$ are some positive constants.

Therefore, for large enough $T$, $\Psi_{V_T}\in L^2(\mathbb{R})$, and using the inverse Fourier transform for $\Psi_{V_T}$, we derive the following equalities
\begin{align*}
B_T=&\bE_T\left(\exp\left(-\epsilon_{\eta}\varsigma_{\eta}\sqrt{T}V_T\right) \1_{\{V_T\le\frac{\eta^*-\eta}{\varsigma_{\eta}}\sqrt{T}\}}\right)
\\
=&\int_{-\infty}^{\frac{\eta^*-\eta}{\varsigma_{\eta}}\sqrt{T}}\exp \left(-\epsilon_{\eta} \varsigma_{\eta}\sqrt{T}v\right)\left((2\pi)^{-1}\int_{\mathbb{R}}\exp(-iuv)\Psi_{V_T}(u)du\right)dv
\\
=&(2\pi)^{-1}\int_{\mathbb{R}}\Psi_{V_T}(u)\left(\int_{-\infty} ^{\frac{\eta^*-\eta} {\varsigma_{\eta}}\sqrt{T}}\exp\left(-\epsilon_{\eta}\varsigma_{\eta}\sqrt{T}v-iuv\right)dv\right)du
\\
=&-\frac{\exp\left(-\epsilon_{\eta}(\eta^*-\eta)T\right)}{2\pi\epsilon_{\eta} \varsigma_{\eta}\sqrt{T}}\int_{\bR}\left(1+\frac{iu}{\epsilon_{\eta}\varsigma_{\eta}\sqrt{T}}\right)^{-1} \exp\left(-iu\sqrt{T}\frac{\eta^*-\eta}{\varsigma_{\eta}}\right)\Psi_{V_T}(u)du
\\
=&\frac{\exp\left(-\frac{\theta_1^2-\theta_0^2}{2\theta_0} \sqrt{\frac{MT}{2\theta_0}}q_{\alpha}\right)}{2\pi\varsigma_{\eta}\sqrt{T}} \int_{\bR}\left(1-\frac{iu}{\varsigma_{\eta}\sqrt{T}}\right)^{-1} \exp\left(iq_{\alpha}u\right)\Psi_{V_T}(u)du.
\end{align*}
Finally, using \eqref{eq:CharExControl}, and \eqref{eq:CharExpan-VT} and Dominated  Convergent Theorem, we conclude that the last integral converges, as $T\to\infty$, to a finite non-zero constant, and this concludes the proof.

\end{proof}

Now, we are in the position to show that $(R_T^\sharp)$ is not asymptotically the most powerful in $\mathcal{K}^*_{\alpha}$.
First, we note that from \eqref{eq:29} and \eqref{eq:ATVariate} and the definition of $R_T^\sharp$ and $c_{\alpha}^\sharp$, we have
\begin{align*}
c_{\alpha}^\sharp(T)\big/\left(1-\bP^{N,T}_{\theta_1}(R_{T}^\sharp)\right)=&c_{\alpha}^\sharp(T)A_T^{-1}B_T^{-1} \\
=&\exp\left[I(\eta)T+\eta^* T\right]\exp\left[-\mathcal{H}(\epsilon_{\eta})-\mathcal{R}_T(\epsilon_{\eta})\right]B_T^{-1}\\
=&\exp\left(-\frac{\theta_1^2-\theta_0^2}{2\theta_0}\sqrt{\frac{MT}{2\theta_0}}q_{\alpha}\right)B_T^{-1},
\end{align*}
where $\eta$ and $\eta^*$ are given as in Lemma~\ref{lemma:BT-AsymEst}, and then by Lemma~\ref{lemma:BT-AsymEst} we get that
\begin{align*}
c_{\alpha}^\sharp(T)\big/\left(1-\bP^{N,T}_{\theta_1}(R_{T}^\sharp)\right)\sim \sqrt{T},
\end{align*}
which certainly violates condition \eqref{eq:RegT3}. This is an allure that $(R_T^\sharp)$ may not be asymptotically the most powerful test, and
the following two results thoroughly show this.
\begin{theorem}\label{th:MorePowerTests}
The rejection region $(\bar{R}_T)$ defined by
\begin{align*}
\bar{R}_{T}:=\left\{U_T^N: L(\theta_0,\theta_1,U_T^N)\ge \bar{c}_{\alpha}(T)\right\},
\end{align*}
with
\begin{align*}
\bar{c}_{\alpha}(T)=\exp\left(-\frac{(\theta_1-\theta_0)^2}{4\theta_0}MT-\frac{\theta_1^2-\theta_0^2} {2\theta_0}\sqrt{\frac{MT}{2\theta_0}}q_{\alpha}+\bar{\beta}(T)\right),
\end{align*}
and $\bar{\beta}(T)$ satisfying
\begin{align}\label{eq:betaCond}
\bar{\beta}(T)=o(\sqrt{T}),\qquad\limsup_{T\to\infty}\bar{\beta}(T)<0,
\end{align}
is in $\mathcal{K}_{\alpha}^*$, and it is asymptotically more powerful than $(R_T^\sharp)$, that is,
\begin{align*}
\limsup_{T\to\infty}\frac{1-\bP^{N,T}_{\theta_1}(\bar{R}_T)}{1-\bP^{N,T}_{\theta_1}(R_{T}^\sharp)}<1.
\end{align*}
\end{theorem}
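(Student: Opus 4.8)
The plan is to verify the two claims separately: membership in $\cK_\alpha^*$, and the strict asymptotic power comparison. For membership, note that $\bar c_\alpha(T) = c_\alpha^\sharp(T)\exp(\bar\beta(T))$ with $\bar\beta(T)=o(\sqrt T)$, so $\ln\bar c_\alpha(T) = \ln c_\alpha^\sharp(T) + o(\sqrt T)$. Retracing the computation in \eqref{eq:heuristT}, the event $\{L\ge\bar c_\alpha(T)\}$ is $\{X_T - \tfrac{2(\theta_1+\theta_0)}{(\theta_1-\theta_0)\sigma\sqrt T}Y_T \ge \tfrac{4\theta_0\ln\bar c_\alpha(T)}{(\theta_1-\theta_0)^2 T}+M\}$, and since $\bar\beta(T)=o(\sqrt T)$ the extra term contributes only $o(1/\sqrt T)$ to the threshold on the right. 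Repeating verbatim the argument of Proposition~\ref{prop:RTstar-in-K} — bounding $X_T$ via the Gaussian tail estimate \eqref{eq:20} with $\delta = \tfrac{2(\theta_1+\theta_0)\epsilon}{(\theta_1-\theta_0)\sigma\sqrt T}$, and using the asymptotic normality \eqref{eq:7} of $Y_T$ under $\bP_{\theta_0}^{N,T}$ — gives $\limsup_{T\to\infty}\bP^{N,T}_{\theta_0}(\bar R_T)\le\alpha$, hence $(\bar R_T)\in\cK_\alpha^*$. (In fact the $o(\sqrt T)$ correction washes out in the limit, so one gets exactly $\alpha$, but $\le\alpha$ is all that is needed.)

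For the power comparison, the key is to compute $1-\bP^{N,T}_{\theta_1}(\bar R_T)$ by the same change-of-measure device used for $(R_T^\sharp)$ in Section~\ref{sec:sharpLargeDevT}. Write $1-\bP^{N,T}_{\theta_1}(\bar R_T) = \bP^{N,T}_{\theta_1}(\ln L \le \ln\bar c_\alpha(T)) = \bar A_T\bar B_T$ with $\bar A_T, \bar B_T$ defined as in \eqref{eq:29} but now with the cutoff $\eta^* = \eta^*(T) = -\tfrac{(\theta_1-\theta_0)^2}{4\theta_0}M - \tfrac{\theta_1^2-\theta_0^2}{2\theta_0}\sqrt{\tfrac{M}{2\theta_0 T}}q_\alpha + \bar\beta(T)/T$, still keeping the tilting exponent $\epsilon_\eta$ anchored at $\eta = -\tfrac{(\theta_1-\theta_0)^2}{4\theta_0}M$ exactly as in Lemma~\ref{lemma:BT-AsymEst}. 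Then $\bar A_T = A_T$ (it does not see $\eta^*$), and the only change is in $\bar B_T$. Running the Fourier-inversion computation of Lemma~\ref{lemma:BT-AsymEst} with the shifted upper limit $\tfrac{\eta^*-\eta}{\varsigma_\eta}\sqrt T = -q_\alpha + \bar\beta(T)/(\varsigma_\eta\sqrt T)$ produces the prefactor $\exp(-\epsilon_\eta(\eta^*-\eta)T)/(\ldots\sqrt T)$; since $(\eta^*-\eta)T = -\tfrac{\theta_1^2-\theta_0^2}{2\theta_0}\sqrt{\tfrac{MT}{2\theta_0}}q_\alpha + \bar\beta(T)$ and $\epsilon_\eta$ at this $\eta$ equals $1$ (one checks from \eqref{eq:epsilonN} that $\eta=-\tfrac{(\theta_1-\theta_0)^2}{4\theta_0}M$ gives $\epsilon_\eta=1$), the extra factor is precisely $\exp(-\bar\beta(T))$ times a convergent integral. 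Combining,
\begin{align*}
\frac{1-\bP^{N,T}_{\theta_1}(\bar R_T)}{1-\bP^{N,T}_{\theta_1}(R_T^\sharp)} = \frac{\bar B_T}{B_T} \sim e^{-\bar\beta(T)} \cdot \frac{\int_{\bR}(1-\tfrac{iu}{\varsigma_\eta\sqrt T})^{-1}e^{iq_\alpha u}\Psi_{V_T}(u)\,du}{\int_{\bR}(1-\tfrac{iu}{\varsigma_\eta\sqrt T})^{-1}e^{iq_\alpha u}\Psi_{V_T}(u)\,du},
\end{align*}
and the ratio of integrals tends to $1$ (both converge to the same nonzero constant by the Lemma~\ref{lemma:CharExpan-VT} expansion and dominated convergence, using the uniform bound \eqref{eq:CharExControl}). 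Hence $\limsup_{T\to\infty}\tfrac{1-\bP^{N,T}_{\theta_1}(\bar R_T)}{1-\bP^{N,T}_{\theta_1}(R_T^\sharp)} = \limsup_{T\to\infty} e^{-\bar\beta(T)} < 1$ by the second condition in \eqref{eq:betaCond}, which is exactly the assertion.

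The main obstacle is making the Fourier-inversion step of Lemma~\ref{lemma:BT-AsymEst} robust to the $T$-dependent cutoff $\eta^*(T)$: one must confirm that $\bar\beta(T)/(\varsigma_\eta\sqrt T)\to 0$ (immediate from $\bar\beta(T)=o(\sqrt T)$) so that the shifted upper integration limit still tends to $-q_\alpha$, and more delicately that the $\bar\beta(T)$ term enters the prefactor cleanly as $e^{-\bar\beta(T)}$ rather than contaminating the integrand — this works because $\bar\beta(T)$ appears only through the exponent $\exp(-\epsilon_\eta\varsigma_\eta\sqrt T v)$ evaluated at the endpoint, and $\epsilon_\eta$ is held fixed. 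A secondary technical point is verifying $\epsilon_\eta=1$ and $\varsigma_\eta^2 = \tfrac{((\theta_1-\theta_0)M)^3}{(\theta_1^2-\theta_0^2)M^2} \cdot (\ldots)$ at the chosen $\eta$ so that the exponential parts $I(\eta)T + \eta^* T$ cancel against $\ln\bar c_\alpha(T)$ up to the $\bar\beta(T)$ term, but this is the same cancellation already recorded in the displayed computation for $c_\alpha^\sharp$ just before the theorem statement, now carried one order further.
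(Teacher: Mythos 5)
Your overall architecture is exactly the paper's: verify membership via Proposition~\ref{prop:RTstar-in-K} (and that part is fine), then compare powers by the $\bar A_T\bar B_T$ decomposition with the tilting exponent $\epsilon_\eta$ anchored at $\eta=-\frac{(\theta_1-\theta_0)^2}{4\theta_0}M$ and only the cutoff $\eta^*$ shifted by $\bar\beta(T)/T$. The idea that $\bar A_T=A_T$ and the $\bar\beta$-correction surfaces only in the prefactor of $B_T$ is the right observation. However, there is a concrete sign error that makes the final step false as written.

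You assert $\epsilon_\eta=1$. Plugging $\eta=-\frac{(\theta_1-\theta_0)^2}{4\theta_0}M$ into \eqref{eq:epsilonN}, one finds $-2\eta+(\theta_1-\theta_0)M=\frac{(\theta_1^2-\theta_0^2)M}{2\theta_0}$, so the numerator of \eqref{eq:epsilonN} is $(\theta_1^2-\theta_0^2)^2M^2\bigl(1-\theta_1^2/\theta_0^2\bigr)<0$ (recall $\theta_1>\theta_0$), while the denominator is the same quantity with a $+$ sign; hence $\epsilon_\eta=-1$, as the paper records in the proof of Lemma~\ref{lemma:ProbExpan-IT}. (A useful sanity check: $\epsilon_\eta=-1$ makes $d\bQ_T/d\bP^{N,T}_{\theta_1}\propto L^{-1}$, i.e.\ the tilt recovers $\bP^{N,T}_{\theta_0}$, which is exactly what centering the log-likelihood ratio at its null mean should do.) With the correct $\epsilon_\eta=-1$, the prefactor in the Fourier inversion is $\exp(-\epsilon_\eta(\eta^*-\eta)T)=\exp((\eta^*-\eta)T)$, and since $(\eta^*-\eta)T=-\frac{\theta_1^2-\theta_0^2}{2\theta_0}\sqrt{\frac{MT}{2\theta_0}}q_\alpha+\bar\beta(T)$, the extra factor relative to $B_T$ is $e^{+\bar\beta(T)}$, not $e^{-\bar\beta(T)}$.

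This matters because your final step does not hold with $e^{-\bar\beta(T)}$. From $\limsup_{T\to\infty}\bar\beta(T)<0$ you get $\liminf_{T\to\infty}\bigl(-\bar\beta(T)\bigr)>0$, hence $\limsup_{T\to\infty}e^{-\bar\beta(T)}\ge\liminf_{T\to\infty}e^{-\bar\beta(T)}>1$; you cannot conclude it is $<1$. With the corrected sign the ratio is $\sim e^{\bar\beta(T)}$ and $\limsup_{T\to\infty}e^{\bar\beta(T)}=e^{\limsup\bar\beta(T)}<1$, which is what the theorem asserts (and is consistent with the geometric picture: $\bar\beta(T)<0$ means $\bar c_\alpha<c_\alpha^\sharp$, so $\bar R_T\supset R_T^\sharp$ and its power is genuinely larger). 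Fixing $\epsilon_\eta=-1$ and the resulting sign on $\bar\beta$ repairs the argument; the rest of your derivation (uniform bound \eqref{eq:CharExControl}, dominated convergence, $\bar\beta(T)/(\varsigma_\eta\sqrt T)\to0$) is sound.
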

\begin{proof}
Following similar arguments as in Proposition~\ref{prop:RTstar-in-K} and using the property that $\bar{\beta}(T)=o(\sqrt{T})$, one can show that $(\bar{R}_{T})\in\cK_\alpha^*$.
 Also, taking
\begin{align*}
\eta^*=-\frac{(\theta_1-\theta_0)^2}{4\theta_0}M-\frac{\theta_1^2-\theta_0^2}{2\theta_0} \sqrt{\frac{M}{2\theta_0T}}q_{\alpha}+\frac{\bar{\beta}(T)}{T},
\end{align*}
and using the same method as in Lemma~\ref{lemma:BT-AsymEst}, we can prove  that
\begin{align*}
B_T\sim \exp\left(-\frac{\theta_1^2-\theta_0^2}{2\theta_0}\sqrt{\frac{MT}{2\theta_0}}q_{\alpha}+\bar{\beta}(T)\right)/\sqrt{T},\quad\textrm{ as }T\to\infty,
\end{align*}
and that
\begin{align*}
\bar{c}_{\alpha}(T)\big/\left(1-\bP^{N,T}_{\theta_1}(\bar{R}_{T})\right)\sim \sqrt{T}.
\end{align*}
Moreover, from the expression of $B_T$ in Lemma~\ref{lemma:BT-AsymEst}, we know that
\begin{align*}
\lim_{T\to\infty}\frac{c_{\alpha}^\sharp(T)\big/\left(1-\bP^{N,T}_{\theta_1}(R_{T}^\sharp)\right)} {\bar{c}_{\alpha}(T)\big/\left(1-\bP^{N,T}_{\theta_1}(\bar{R}_{T})\right)}=1.
\end{align*}
Therefore, by the above limit and \eqref{eq:betaCond},
\begin{align*}
\limsup_{T\to\infty}\frac{1-\bP^{N,T}_{\theta_1}(\bar{R}_T)}{1-\bP^{N,T}_{\theta_1}(R_{T}^\sharp)} =\limsup_{T\to\infty}\frac{\bar{c}_{\alpha}(T)}{c_{\alpha}^\sharp(T)}=\limsup_{T\to\infty}e^{\bar{\beta}(T)}<1.
\end{align*}
\end{proof}

We conclude this section with the proof of Theorem~\ref{Th:NoMostAsymKstar}.
\begin{proof}[Proof of Theorem~\ref{Th:NoMostAsymKstar}]
Since $c_{\alpha}(T)>0$, we may write
\begin{align*}
c_{\alpha}(T)=\exp\left(-\frac{(\theta_1-\theta_0)^2}{4\theta_0}MT-\frac{\theta_1^2-\theta_0^2} {2\theta_0}\sqrt{\frac{MT}{2\theta_0}}q_{\alpha}+\beta_c(T)\right),
\end{align*}
for some  $\beta_c(T)\in\mathbb{R}$.

First, we claim that if $(R_T)$ is asymptotically the most powerful in $\mathcal{K}_{\alpha}^*$, then $\beta_c(T)$ must satisfy condition \eqref{eq:betaCond}.
Indeed, assume that  $\beta_c(T)$ does not satisfy \eqref{eq:betaCond}. Then, first let us assume that $\beta_c(T)=o(\sqrt{T})$ is not satisfied.
Hence, at least one of the followings holds true
\begin{align*}
\liminf_{T\to\infty}\beta_c(T)/\sqrt{T}<0,\qquad\limsup_{T\to\infty}\beta_c(T)/\sqrt{T}>0.
\end{align*}
If the first inequality holds true, then there exists a constant $C>0$, such that for any large $T'$, we can pick up $T>T'$ such that $\beta_c(T)/\sqrt{T}\le-C$.
By \eqref{15} we know that
\begin{align*}
\bP^{N,T}_{\theta_0}(R_T)\ge&\bP^{N,T}_{\theta_0}\left(\sqrt{\frac{2\theta_0}{\sigma^2M}}Y_T\le q_{\alpha}-\frac{\theta_1^2-\theta_0^2} {2\theta_0}\sqrt{\frac{2\theta_0}{MT}}\beta_c(T)\right) \\
\ge&\bP^{N,T}_{\theta_0}\left(\sqrt{\frac{2\theta_0}{\sigma^2M}}Y_T\le q_{\alpha}+\frac{\theta_1^2-\theta_0^2} {2\theta_0}\sqrt{\frac{2\theta_0}{M}}C\right).
\end{align*}
This implies that $\limsup_{T\to\infty}\bP^{N,T}_{\theta_0}(R_T)\ge\alpha+c$, for some constant $c>0$. Hence $(R_T)\notin\mathcal{K}_{\alpha}^*$.
If the second inequality holds true, then there exists a constant $C>0$ and a sequence $\{T_n\}$ which goes to infinity, such that $\beta_c(T_n)/\sqrt{T_n}\ge C$.
Then, it is clear by the definition that $R_{T_n}\subset R_{T_n}^\sharp$, so
\begin{align*}
\frac{1-\bP^{N,T_n}_{\theta_1}(R_{T_n}^\sharp)}{1-\bP^{N,T_n}_{\theta_1}(R_{T_n})}\le1.
\end{align*}
Consequently, by using similar method as in Theorem~\ref{th:MorePowerTests} and the above inequality, we deduce that
\begin{align*}
\liminf_{T\to\infty}\frac{1-\bP^{N,T}_{\theta_1}(\bar{R}_T)}{1-\bP^{N,T}_{\theta_1}(R_T)} \le\liminf_{n\to\infty}\frac{1-\bP^{N,T_n}_{\theta_1}(\bar{R}_{T_n})}{1-\bP^{N,T_n}_{\theta_1}(R_{T_n})}
\le\liminf_{n\to\infty}\frac{1-\bP^{N,T_n}_{\theta_1}(\bar{R}_{T_n})}{1-\bP^{N,T_n}_{\theta_1}(R_{T_n}^\sharp)}<1,
\end{align*}
which contradicts the fact that $(R_T)$ is asymptotically the most powerful in $\mathcal{K}_{\alpha}^*$.

Next, we assume that $\beta_c(T)=o(\sqrt{T})$ is satisfied, but $\limsup_{T\to\infty}\beta_c(T)\ge0$.
Then, by using similar method as in Theorem~\ref{th:MorePowerTests}, we get
\begin{align*}
\liminf_{T\to\infty}\frac{1-\bP^{N,T}_{\theta_1}(\bar{R}_T)}{1-\bP^{N,T}_{\theta_1}(R_{T})} =\liminf_{T\to\infty}\frac{\bar{c}_{\alpha}(T)}{c_{\alpha}(T)}=\liminf_{T\to\infty}e^{\bar{\beta}(T)-\beta_c(T)}<1,
\end{align*}
which again contradicts the original assumption that $(R_T)$ is asymptotically the most powerful in $\mathcal{K}_{\alpha}^*$.
Thus, $\beta_c(T)$ satisfies condition \eqref{eq:betaCond}.

Finally, we assume by contradiction that $(R_T)$ is asymptotically the most powerful in $\mathcal{K}_{\alpha}^*$, then by the above  we know that $\beta_c$ must satisfy condition \eqref{eq:betaCond}. Consider the rejection region $(R_T^+)$ with
\begin{align*}
\beta^+_c(T)=-T^{1/4}|\beta_c(T)|^{1/2}.
\end{align*}
Then it is easy to verify that $\beta^+_c$ still satisfies \eqref{eq:betaCond}, and hence it is in $\mathcal{K}_{\alpha}^*$.
But, in view of Theorem~\ref{th:MorePowerTests}, we have
\begin{align*}
\limsup_{T\to\infty}\frac{1-\bP^{N,T}_{\theta_1}(R^+_T)}{1-\bP^{N,T}_{\theta_1}(R_{T})} =&\limsup_{T\to\infty}\exp\left(-T^{1/4}|\beta_c(T)|^{1/2}-\beta_c(T)\right)
\\
\le&\limsup_{T\to\infty}\exp\left(-T^{1/4}|\beta_c(T)|^{1/2}\left(1-(|\beta_c(T)|/\sqrt{T})^{1/2}\right)\right)=0,
\end{align*}
which gives a contradiction to the fact that $(R_T)$ is asymptotically the most powerful in  $\mathcal{K}_{\alpha}^*$.
This concludes the proof.
\end{proof}

\subsubsection{New class of rejection regions}\label{sec:newClassT}
In order to make $(R_T^\sharp)$ defined by \eqref{eq:RTSharp} to be asymptotically the most powerful, we propose to redefine the class of the tests $\cK_\alpha^*$ by making it slightly smaller. Recall that $(R_T)\in\cK_\alpha^*$, if
$$
\limsup_{T\to\infty}\bP^{N,T}_{\theta_0}(R_T)\le\alpha,
$$
which essentially means that $\bP^{N,T}_{\theta_0}(R_T)$ converges to $\alpha$ (or something smaller). We will look at rejection regions for which $\bP^{N,T}_{\theta_0}(R_T)$ converges to $\alpha$ fast enough, which in a sense is a reasonable assumption.
To write this idea formally, we will need a key lemma which will make clear what is that `reasonable speed of convergence' to be imposed on $\bP^{N,T}_{\theta_0}(R_T)$.

Consider the random process
\begin{align*}
I_T:=-\sqrt{\frac{\theta_0}{2M}}\frac{(\theta_1-\theta_0)\sqrt{T}}{\theta_1+\theta_0}X_T+\sqrt{\frac{2\theta_0}{\sigma^2M}}Y_T,\quad T\geq 0,
\end{align*}
where $X_T$ and $Y_T$ are given by \eqref{eq:heuristT}. Then, we have
\begin{lemma}\label{lemma:ProbExpan-IT}
For any integer $n>0$ and $x,\delta\in\bR$,  the following expansion holds true
\begin{align}\label{eq:ProbExpan-IT}
\bP^{N,T}_{\theta_0}\left(I_T\le x+\delta T^{-1/2}\right)=\sum_{k=0}^nF_k^\delta(x)T^{-k/2}+\mathfrak{R}_{n+1}^{\delta,T}(x)T^{-(n+1)/2},
\end{align}
where $F_k^\delta(\cdot)$ are bounded smooth functions and all their derivatives are bounded, and $\mathfrak{R}_{n+1}^{\delta,T}(\cdot)$ is uniformly bounded in $x$.
\end{lemma}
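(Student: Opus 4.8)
The plan is to prove \eqref{eq:ProbExpan-IT} as an Edgeworth-type expansion, following the same scheme as in the proof of Lemma~\ref{lemma:CharExpan-VT} (ultimately rooted in \citet[Lemma~2, p.~72]{Cramer1970} and \citet{BercuRouault2001}): obtain a closed form for the characteristic function of $I_T$, expand its logarithm in powers of $T^{-1/2}$, invert the Fourier transform term by term, and finally absorb the shift $\delta T^{-1/2}$ by a Taylor expansion. Throughout we work under $\bP^{N,T}_{\theta_0}$, so $u_k$ solves $du_k=-\theta_0\lambda_k^{2\beta}u_k\,dt+\sigma\lambda_k^{-\gamma}\,dw_k$ with $u_k(0)=0$.

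First I would rewrite $I_T$ as a sum of independent quadratic functionals. Applying It\^o's formula to $u_k^2$ converts each $\int_0^T u_k\,dw_k$ into a linear combination of $u_k^2(T)$, $\int_0^T u_k^2(t)\,dt$ and a deterministic term; substituting into the definitions of $X_T$ and $Y_T$ from \eqref{eq:heuristT} gives $I_T=\sum_{k=1}^N\bigl(a_k^T u_k^2(T)+b_k^T\int_0^T u_k^2(t)\,dt\bigr)+d^T$, where $a_k^T,b_k^T=O(T^{-1/2})$, $d^T$ is deterministic, and the summands are independent across $k$ (each depends only on $w_k$). Here $d^T$ precisely cancels the leading-order drift of the $\int_0^T u_k^2\,dt$ terms, so that $\bE_{\theta_0}[I_T]=O(T^{-1/2})$ and $\mathrm{Var}_{\theta_0}(I_T)\to1$ by \eqref{eq:7}. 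Consequently
\[
\Psi_{I_T}(u):=\bE_{\theta_0}\!\left[e^{iuI_T}\right]=e^{iud^T}\prod_{k=1}^N\bE_{\theta_0}\!\left[\exp\!\Bigl(iua_k^T u_k^2(T)+iub_k^T\!\int_0^T u_k^2(t)\,dt\Bigr)\right],
\]
and each factor is the explicit Laplace transform of the pair $\bigl(u_k^2(T),\int_0^T u_k^2(t)\,dt\bigr)$ for an Ornstein--Uhlenbeck process started at $0$ --- the same type of formula that underlies Appendix~\ref{appendix:SLDforOU}, Proposition~\ref{lemma:BercueSLD}, and the Feynman--Kac computation of Section~\ref{ssec:CumFuncT}. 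This yields a closed, analytic formula for $\Psi_{I_T}(u)$ on a strip of $u$ growing with $T$.

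Next I would Taylor expand $\ln\Psi_{I_T}(u)$ as $T\to\infty$. Since $a_k^T,b_k^T=O(T^{-1/2})$ and the purely exponential remainder pieces (the analogues of the $\mathcal R_T^k$ in \eqref{eq:notationLHR}) are $O(e^{-CT})$, one gets $\ln\Psi_{I_T}(u)=-u^2/2+\sum_{j=1}^{n+1}T^{-j/2}Q_j(iu)+O\bigl((|u|^{n+2}+|u|^{s})T^{-(n+2)/2}\bigr)$ for polynomials $Q_j$ with $Q_j(0)=0$ (the term $-u^2/2$ coming from the unit variance). Exponentiating and collecting powers of $T^{-1/2}$ gives $\Psi_{I_T}(u)=e^{-u^2/2}\bigl[1+\sum_{k=1}^n T^{-k/2}\pi_k(iu)\bigr]+(\text{remainder})$ with polynomials $\pi_k$. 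For the Fourier inversion I need $\Psi_{I_T}\in L^1(\bR)$ together with a remainder bound uniform in $u$; both follow, exactly as the estimate \eqref{eq:CharExControl}, from the explicit product form, namely $|\Psi_{I_T}(u)|\le(1+\mu_1 u^2/T)^{-\mu_2 T}$ for $T$ large. Inverting via $\tfrac{1}{2\pi}\int_\bR e^{-iuy}e^{-u^2/2}(iu)^l\,du=(-1)^l\Phi^{(l+1)}(y)$ gives a density expansion $f_{I_T}(y)=\Phi'(y)\bigl[1+\sum_k T^{-k/2}p_k(y)\bigr]+(\text{remainder})$, and integrating on $(-\infty,x]$ (using $\int_{-\infty}^x\Phi^{(l+1)}(y)\,dy=\Phi^{(l)}(x)$) yields $\bP^{N,T}_{\theta_0}(I_T\le x)=\sum_{k=0}^n\widetilde F_k(x)T^{-k/2}+\widetilde{\mathfrak R}_{n+1}^{T}(x)T^{-(n+1)/2}$ with $\widetilde F_0=\Phi$, $\widetilde F_k=-\Phi'\cdot(\text{polynomial})$ for $k\ge1$ (hence bounded with bounded derivatives), and $\widetilde{\mathfrak R}_{n+1}^{T}$ uniformly bounded in $x$ by the remainder control above.

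Finally, to pass from $x$ to $x+\delta T^{-1/2}$ I would Taylor expand each $\widetilde F_k(x+\delta T^{-1/2})$ about $x$ to order $n-k$ --- legitimate since all derivatives of $\widetilde F_k$ are bounded --- and re-collect the terms by powers of $T^{-1/2}$, which gives \eqref{eq:ProbExpan-IT} with $F_0^\delta=\Phi$ and $F_k^\delta(x)=\sum_{j=0}^{k}\tfrac{\delta^j}{j!}\widetilde F_{k-j}^{(j)}(x)$: a polynomial of degree $k$ in $\delta$ whose coefficients are $\Phi'(x)$ times polynomials in $x$ for $k\ge1$, hence bounded smooth with all derivatives bounded, and with $\mathfrak R_{n+1}^{\delta,T}$ uniformly bounded in $x$. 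The main obstacle is the technical bookkeeping in the middle steps: producing the explicit characteristic function with the correct branch of the square-root-type expressions from the Ornstein--Uhlenbeck Laplace transform and its domain of analyticity in $u$, and --- most delicately --- controlling the remainder uniformly in $x$, which as in \citet{Cramer1970,BercuRouault2001} hinges on the Gaussian-type decay bound \eqref{eq:CharExControl} for $|\Psi_{I_T}|$ together with the fact that the monomials $u^l/(\sqrt T)^k$ stay bounded on the relevant range $|u|=O(T^{1/6})$.
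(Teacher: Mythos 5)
Your proposal is correct and arrives at the same Edgeworth expansion, but it takes a recognizably different route to the characteristic function than the paper does. You compute $\Psi_{I_T}$ under $\bP^{N,T}_{\theta_0}$ from scratch: decompose $I_T$ into a sum of independent quadratic functionals $a_k^T u_k^2(T)+b_k^T\int_0^T u_k^2\,dt$ plus the deterministic constant $d^T$, factorize, and invoke the explicit OU Laplace transform (Feynman--Kac / Proposition~\ref{lemma:BercueSLD}) for each factor. The paper instead observes that $I_T$ is an affine function of $\ln L(\theta_0,\theta_1,U_T^N)$, notes the change-of-measure identity $\bE_{\theta_0}\bigl[e^{\epsilon\ln L}\bigr]=\bE_{\theta_1}\bigl[e^{(\epsilon-1)\ln L}\bigr]$ (equivalently $\bar{\mathcal{L}}(\epsilon)=\mathcal{L}(\epsilon-1)$, etc.), and then picks $\eta=-\frac{(\theta_1-\theta_0)^2}{4\theta_0}M$ so that $\epsilon_\eta=-1$, which makes $\Psi_{I_T}(u)=\Psi_{V_T}(-u)$. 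This lets the paper literally reuse the expansion and decay bound already proved in Lemma~\ref{lemma:CharExpan-VT}, with no further computation of cumulants; your route re-derives those ingredients and requires the extra bookkeeping to check that $d^T$ cancels the $O(\sqrt T)$ drift and that the analogue of the bound \eqref{eq:CharExControl} holds for $\Psi_{I_T}$ directly. Both arrive at the same place --- Fourier inversion and the $\delta$-shift Taylor step are identical --- so the trade-off is self-containedness (yours) versus economy (the paper's tilt-and-reuse trick). One small point worth making explicit if you write this up: when you expand $T\cdot\mathcal{L}(iua_k^T,iub_k^T)$ the linear-in-$u$ term is of order $\sqrt T$, and you must verify that this cancels exactly against $iud^T$; this cancellation is precisely what the paper's choice $\epsilon_\eta=-1$ encodes automatically, so in your version it needs a short separate check.
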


\begin{proof}
Analogous to \eqref{MomentLLRSplit}, we obtain
\begin{align}\label{eq:MomentLLR-Null}
\bar{\mathcal{L}}_T(\epsilon):=&T^{-1}\ln\bE_{\theta_0}\left[\exp\left(\epsilon\ln L(\theta_0,\theta_1,U_T^N)\right)\right]\notag
\\
&=\bar{\mathcal{L}}(\epsilon)+T^{-1}\bar{\mathcal{H}}(\epsilon)+T^{-1}\bar{\mathcal{R}}_T(\epsilon),
\end{align}
where
\begin{align*}
\bar{\mathcal{L}}(\epsilon)=&\frac{M}{2}\left(\theta_0+(\theta_1-\theta_0)\epsilon-\sqrt{\theta_0^2+(\theta_1^2-\theta_0^2)\epsilon}\right),
\\
\bar{\mathcal{D}}(\epsilon)=&\frac{\theta_0+(\theta_1-\theta_0)\epsilon}{\sqrt{\theta_0^2+(\theta_1^2-\theta_0^2)\epsilon}},
\\
\bar{\mathcal{H}}(\epsilon)=&-\frac{N}{2}\ln\left(\frac{1}{2}+\frac{1}{2}\bar{\mathcal{D}}(\epsilon)\right),
\\
\bar{\mathcal{R}}_T(\epsilon)=&-\frac{1}{2}\sum_{k=1}^N\ln\left(1+\frac{1-\bar{\mathcal{D}}(\epsilon)}
{1+\bar{\mathcal{D}}(\epsilon)}\exp\left(-2\lambda_k^{2\beta}T\sqrt{\theta_0^2+(\theta_1^2-\theta_0^2)\epsilon}\right)\right).
\end{align*}
Then it is not hard to observe that
\begin{align*}
\bar{\mathcal{L}}(\epsilon)=\mathcal{L}(\epsilon-1),\qquad\bar{\mathcal{D}}(\epsilon)=\mathcal{D}(\epsilon-1),\qquad \bar{\mathcal{H}}(\epsilon)=\mathcal{H}(\epsilon-1),\qquad\bar{\mathcal{R}}_T(\epsilon)=\mathcal{R}_T(\epsilon-1).
\end{align*}
Hence,
\begin{align}\label{eq:NAExpTrans}
\bE_{\theta_0}\left[\exp\left(\epsilon\ln L(\theta_0,\theta_1,U_T^N)\right)\right]=\bE_{\theta_1}\left[\exp\left((\epsilon-1)\ln L(\theta_0,\theta_1,U_T^N)\right)\right].
\end{align}
Notice that
\begin{align*}
I_T=-\frac{\sqrt{8\theta_0^3}}{(\theta_1^2-\theta_0^2)\sqrt{TM}}\ln L(\theta_0,\theta_1,U_T^N)-\frac{(\theta_1-\theta_0)\sqrt{\theta_0TM/2}}{\theta_1+\theta_0}.
\end{align*}
We set $\eta=-\frac{(\theta_1-\theta_0)^2}{4\theta_0}M$ in Lemma~\ref{lemma:CharExpan-VT}, and then
\begin{align*}
\epsilon_{\eta}=-1,\qquad\varsigma_{\eta}=(\theta_1^2-\theta_0^2)\sqrt{M/8\theta_0^3}.
\end{align*}
Consequently, using \eqref{eq:NAExpTrans} we can find the characteristic function of $I_T$ under probability measure $\bP^{N,T}_{\theta_0}$,
\begin{align*}
\Psi_{I_T}(u)=&\exp\left(-iu\frac{\theta_1-\theta_0}{\theta_1+\theta_0}\sqrt{\theta_0TM/2}\right)\bE_{\theta_0} \exp\left(-\frac{iu\sqrt{8\theta_0^3}}{(\theta_1^2-\theta_0^2)\sqrt{TM}}\ln L(\theta_0,\theta_1,U_T^N)\right)
\\
=&\exp\left(\frac{iu\eta\sqrt{T}}{\varsigma_{\eta}}\right)\bE_{\theta_0} \exp\left(\frac{iu}{\varsigma_{\eta}\sqrt{T}}\ln L(\theta_0,\theta_1,U_T^N)\right)
\\
=&\exp\left(\frac{iu\eta\sqrt{T}}{\varsigma_{\eta}}\right)\bE_{\theta_1} \exp\left[\left(\epsilon_{\eta}+\frac{iu}{\varsigma_{\eta}\sqrt{T}}\right)\ln L(\theta_0,\theta_1,U_T^N)\right]\\
=&\Psi_{V_T}(-u).
\end{align*}
where the last equality can be verified by direct evaluation.
Therefore, by \eqref{eq:CharExpan-VT}, we get
\begin{align*}
\Psi_{I_T}(u)=e^{-u^2/2}+\sum_{k=1}^{n}\phi_k(u)T^{-k/2}+\mathbf{r}_{n+1}(u),
\end{align*}
where
\begin{align*}
\phi_k(u)=e^{-u^2/2}\sum_{l=k}^{s_1(n-1)}\varphi_{k-1,l}(-u)^{l},\qquad \mathbf{r}_{n+1}(u)=e^{-u^2/2}O\left(\frac{|u|^{s_2(n-1)}+|u|^{s_3(n-1)}}{T^{(n+1)/2}}\right).
\end{align*}
Moreover, the high order term $\mathbf{r}_{n+1}(u) e^{u^2/2}$ is uniformly bounded for $|u|=O(T^{1/6})$.

Now we are ready to prove \eqref{eq:ProbExpan-IT}. First, let us consider the case $\delta=0$, for which we will suppress the index $\delta$ in $F_k^\delta(x)$ and $R_{n+1}^{\delta,T}(x)$. Put
\begin{align}\label{eq:RemainDef}
\mathbf{R}_{n+1}^T(x):=\bP^{N,T}_{\theta_0}\left(I_T\le x\right)-\int_{-\infty}^x\int_{\mathbb{R}}(2\pi)^{-1}e^{-iuv}\left(e^{-u^2/2}+\sum_{k=1}^{n}\phi_k(u)T^{-k/2}\right)dudv.
\end{align}
Note that $\Psi_{I_T}(u)$ also satisfies \eqref{eq:CharExControl}, which admits the representation
\begin{align*}
\int_{\mathbb{R}}e^{iux}d\mathbf{R}_{n+1}^T(x)=&\int_{\mathbb{R}}e^{iux}d\bP^{N,T}_{\theta_0}\left(I_T\le x\right)-e^{-u^2/2}-\sum_{k=1}^{n}\phi_k(u)T^{-k/2}
\\
=&\Psi_{I_T}(u)-e^{-u^2/2}-\sum_{k=1}^{n}\phi_k(u)T^{-k/2}=\mathbf{r}_{n+1}(u).
\end{align*}
Then, following the same approach as \citet[Lemma 4 p.77 and Theorem 12 p.33]{Cramer1970}, we can obtain, for some constant $C>0$,
\begin{align*}
\mathbf{R}_{n+1}^T(x)=O\left(\int_{CT^{1/6}}^{\infty}\frac{|\Psi_{I_T}(u)|}{u}du+T^{-(n+1)/2}\right),
\end{align*}
where the remainder is uniform in $x$. This, combined with \eqref{eq:CharExControl}, yields
\begin{align*}
\mathbf{R}_{n+1}^T(x)=&O\left(O\left(\left(1+\frac{\mu_1 (CT^{1/6})^2}{T}\right)^{-\mu_2 T/2}\right)+T^{-(n+1)/2}\right)
\\
=&O\left(O\left(e^{-\mu_3 T^{1/3}}\right)+T^{-(n+1)/2}\right)=O\left(T^{-(n+1)/2}\right),
\end{align*}
where $\mu_3$ is a positive constant, and the remainder is uniform in $x$. Thus, if we set $\mathfrak{R}^T_{n+1}(x)=T^{(n+1)/2}\mathbf{R}_{n+1}^T(x)$ and recall \eqref{eq:RemainDef}, then \eqref{eq:ProbExpan-IT} for $\delta=0$ follows immediately with
\begin{align*}
F_0(x)=&\int_{-\infty}^x\int_{\mathbb{R}}(2\pi)^{-1}e^{-iuv}e^{-u^2/2}dudv=\Phi(x),\\
F_k(x)=&\int_{-\infty}^x\int_{\mathbb{R}}(2\pi)^{-1}e^{-iuv}\phi_k(u)dudv,\qquad 1\le k\le n.
\end{align*}
Since $\mathbf{R}_{n+1}^T(\cdot)$ is uniform in $x$, we have that $\mathfrak{R}^T_{n+1}(\cdot)$ is uniformly bounded for $T\to\infty$.
Taking into account the form of $\phi_k$, and the definition of $F_k$, the  smoothness and boundedness of $F_k(\cdot)$ and its derivatives follow immediately.

Now we consider the case $\delta\neq0$. Since $F_k(x)$ has bounded derivatives, we have the expansions
\begin{align*}
F_k(x+\delta T^{-1/2})=&\sum_{j=0}^{n-k}\frac{F_k^{(j)}(x)}{j!}\delta^j T^{-j/2}+\frac{F_k^{(n-k+1)}(x_k^\delta)}{(n-k+1)!}\delta^{n-k+1} T^{-(n-k+1)/2}
\\
=&\sum_{j=0}^{n-k}\frac{F_k^{(j)}(x)}{j!}\delta^j T^{-j/2}+O\left(T^{-(n-k+1)/2}\right),\qquad 0\le k\le n,
\end{align*}
where $x_k^\delta$ is some number between $x$ and $\delta T^{-1/2}$, and the remainder is uniformly bounded in $x$. Note that, from the above case $\delta=0$, and with $x:=x+\delta T^{-1/2}$ in  \eqref{eq:ProbExpan-IT}, we already have
\begin{align*}
\bP^{N,T}_{\theta_0}\left(I_T\le x+\delta T^{-1/2}\right)=&\sum_{k=0}^nF_k(x+\delta T^{-1/2})T^{-k/2}+\mathfrak{R}_{n+1}^T(x+\delta T^{-1/2})T^{-(n+1)/2} \\
=&\sum_{k=0}^nF_k(x+\delta T^{-1/2})T^{-k/2}+O\left(T^{-(n+1)/2}\right).
\end{align*}
Combining the above two relations yields
\begin{align*}
\bP^{N,T}_{\theta_0}\left(I_T\le x+\delta T^{-1/2}\right)=&\sum_{k=0}^n\left(\sum_{j=0}^{k}\frac{F_{k-j}^{(j)}(x)}{j!}\delta^j\right)T^{-k/2}+O\left(T^{-(n+1)/2}\right).
\end{align*}
This concludes the proof.
\end{proof}

For a fixed parameter $\delta\in\bR$, we define the following family of rejection regions
\begin{align}\label{def:RTdelta}
R_T^\delta:=\{U_T^N: I_T\le q_{\alpha}+\delta T^{-1/2}\}.
\end{align}
Alternatively, one can show that $R_T^\delta$ can be written as a likelihood ratio test, as given in \eqref{eq:RTDelta}.
The parameter $\delta$ is meant to control the speed of convergence of  $\bP^{N,T}_{\theta_0}\left(R_T^\delta\right)$ to $\alpha$. Indeed,
by Lemma~\ref{lemma:ProbExpan-IT}, with $n=1$ and $x=q_{\alpha}$, we have
\begin{align}\label{eq:Rdelta-NullAsym}
\bP^{N,T}_{\theta_0}\left(R_T^\delta\right)=&\bP^{N,T}_{\theta_0}\left(I_T\le q_{\alpha}+\delta T^{-1/2}\right)\notag \\
=&F_0(q_{\alpha})+\left(F_0'(q_{\alpha})\delta+F_1(q_{\alpha})\right)T^{-1/2}+O(T^{-1})\notag \\
=&\alpha+\left((2\pi)^{-1/2}e^{-q_{\alpha}^2/2}\delta+F_1(q_{\alpha})\right)T^{-1/2}+O(T^{-1}) \notag \\
=&\alpha+\alpha_1(\delta)T^{-1/2}+O(T^{-1}),
\end{align}
where $\alpha_1(\delta) = (2\pi)^{-1/2}e^{-q_{\alpha}^2/2}\delta+F_1(q_{\alpha})$.
%We will show below that in fact $\alpha_1$ can be computed by a very simple expression.
Since $\delta$ can be chosen arbitrarily, we may let $\alpha_1(\delta)>0$.
As next result shows, there exists a computable form for the threshold $\alpha_1$, that can be conveniently  used in applications and numerical evaluations.

\begin{proposition}\label{prop:alpha1}
For any $\delta\in\bR$, the following identity holds true
\begin{equation}\label{eq:alpha1explicit}
\alpha_1(\delta) = (2\pi)^{-1/2}e^{-q_{\alpha}^2/2}\delta + \frac{e^{-q_{\alpha}^2/2}}{2\sqrt{\pi M\theta_0}}\left(\frac{(\theta_1-\theta_0)N}{2(\theta_1+\theta_0)}+1-q_{\alpha}^2\right).
\end{equation}
\end{proposition}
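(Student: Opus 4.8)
The plan is to identify the function $F_1$ appearing in \eqref{eq:Rdelta-NullAsym}, since that expansion already gives $\alpha_1(\delta) = (2\pi)^{-1/2}e^{-q_{\alpha}^2/2}\delta + F_1(q_\alpha)$; thus it suffices to make $F_1(q_\alpha)$ explicit. I would work directly from the characteristic function of $I_T$ under $\bP^{N,T}_{\theta_0}$ obtained inside the proof of Lemma~\ref{lemma:ProbExpan-IT}, namely $\Psi_{I_T}(u)=\Psi_{V_T}(-u)$, together with the representation \eqref{eq:Char-VT} of $\Psi_{V_T}$ and the splitting \eqref{eq:Deriv-LLR} of the derivatives of $\mathcal L_T$ at $\epsilon_\eta$. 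Recall that here $\eta=-\frac{(\theta_1-\theta_0)^2}{4\theta_0}M$ forces $\epsilon_\eta=-1$ and $\varsigma_\eta=(\theta_1^2-\theta_0^2)\sqrt{M/8\theta_0^3}$.

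First I would Taylor-expand $\ln\Psi_{I_T}(u)$ around $\epsilon_\eta$. The first-order term cancels the linear-in-$\sqrt T$ phase; the second-order term produces $-u^2/2$ because $\mathcal L''(\epsilon_\eta)=\varsigma_\eta^2$; the $\mathcal R_T$-contributions are exponentially negligible by \eqref{eq:notationLHR}; and collecting the remaining terms of order $T^{-1/2}$ — the $\mathcal H'$ piece coming from \eqref{eq:Deriv-LLR} and the cubic $\mathcal L'''$ piece — yields
\[
\Psi_{I_T}(u)=e^{-u^2/2}\left[1+\frac{i}{\sqrt T}\left(-\frac{\mathcal H'(\epsilon_\eta)}{\varsigma_\eta}\,u+\frac{\mathcal L'''(\epsilon_\eta)}{6\varsigma_\eta^3}\,u^3\right)+O(T^{-1})\right],
\]
with remainder controlled uniformly on $|u|=O(T^{1/6})$ exactly as in Lemma~\ref{lemma:CharExpan-VT}. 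Inverting the Fourier transform term by term — using $\frac{1}{2\pi}\int iu\,e^{-iuv-u^2/2}\,du=v\phi(v)$ and $\frac{1}{2\pi}\int iu^3\,e^{-iuv-u^2/2}\,du=-(v^3-3v)\phi(v)$, where $\phi$ is the standard normal density — and integrating from $-\infty$ to $x$ (the contributions at $-\infty$ vanishing) gives
\[
F_1(x)=\left(\frac{\mathcal H'(\epsilon_\eta)}{\varsigma_\eta}+\frac{\mathcal L'''(\epsilon_\eta)}{6\varsigma_\eta^3}(x^2-1)\right)\phi(x).
\]
The term-by-term inversion is licit because $\Psi_{I_T}$ satisfies \eqref{eq:CharExControl}, which is already established.

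It then remains to evaluate the three constants at $\epsilon_\eta=-1$, where $\theta_1^2+(\theta_1^2-\theta_0^2)\epsilon_\eta=\theta_0^2$ and $\mathcal D(\epsilon_\eta)=1$. Differentiating $\mathcal L^k(\epsilon)=\tfrac{\lambda_k^{2\beta}}{2}\bigl(\theta_1+(\theta_1-\theta_0)\epsilon-\sqrt{\theta_1^2+(\theta_1^2-\theta_0^2)\epsilon}\bigr)$ and summing over $k$ gives $\mathcal L''(\epsilon_\eta)=\tfrac{(\theta_1^2-\theta_0^2)^2}{8\theta_0^3}M=\varsigma_\eta^2$ (consistent with Lemma~\ref{lemma:CharExpan-VT}) and $\mathcal L'''(\epsilon_\eta)=-\tfrac{3(\theta_1^2-\theta_0^2)^3}{16\theta_0^5}M$, whence $\frac{\mathcal L'''(\epsilon_\eta)}{6\varsigma_\eta^3}=-\frac{1}{\sqrt{2\theta_0 M}}$. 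From $\mathcal H^k(\epsilon)=-\tfrac12\ln\tfrac{1+\mathcal D(\epsilon)}{2}$ one finds $(\mathcal H^k)'(\epsilon_\eta)=-\tfrac12\,\mathcal D'(\epsilon_\eta)/(1+\mathcal D(\epsilon_\eta))=\tfrac{(\theta_1-\theta_0)^2}{8\theta_0^2}$, so $\mathcal H'(\epsilon_\eta)=\tfrac{N(\theta_1-\theta_0)^2}{8\theta_0^2}$ and $\frac{\mathcal H'(\epsilon_\eta)}{\varsigma_\eta}=\frac{N(\theta_1-\theta_0)}{2(\theta_1+\theta_0)\sqrt{2\theta_0 M}}$. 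Substituting these into the formula for $F_1$, writing $\phi(x)=e^{-x^2/2}/\sqrt{2\pi}$ and using $\sqrt{2\pi}\sqrt{2\theta_0 M}=2\sqrt{\pi\theta_0 M}$, gives
\[
F_1(x)=\frac{e^{-x^2/2}}{2\sqrt{\pi M\theta_0}}\left(\frac{(\theta_1-\theta_0)N}{2(\theta_1+\theta_0)}+1-x^2\right),
\]
and taking $x=q_\alpha$ in $\alpha_1(\delta)=(2\pi)^{-1/2}e^{-q_\alpha^2/2}\delta+F_1(q_\alpha)$ yields \eqref{eq:alpha1explicit}.

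The only genuine difficulty is bookkeeping: organizing the Taylor expansion of $\ln\Psi_{I_T}$ so that precisely the $O(T^{-1/2})$ terms are retained (everything else being cancelled, exponentially small through the $\mathcal R_T$-terms, or absorbed into the uniformly bounded remainder of Lemma~\ref{lemma:CharExpan-VT}), and then correctly computing $\mathcal L'''$ and $\mathcal H'$ at $\epsilon_\eta=-1$ and carrying the numerical constants through the final simplification. No new analytic input beyond Lemmas~\ref{lemma:CharExpan-VT}--\ref{lemma:ProbExpan-IT} is required.
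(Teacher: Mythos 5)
Your proposal is correct and follows essentially the same route as the paper: both start from \eqref{VCharLogExpan}, read off the $T^{-1/2}$ coefficient of $\ln\Psi_{V_T}$ (equivalently $\phi_1$), and compute $F_1(q_\alpha)$ by Fourier inversion; the paper simply states the expanded form of $\ln\Psi_{V_T}$ at $\epsilon_\eta=-1$ directly, while you make the intermediate quantities $\mathcal L'''(\epsilon_\eta)/6\varsigma_\eta^3=-1/\sqrt{2\theta_0M}$ and $\mathcal H'(\epsilon_\eta)/\varsigma_\eta=N(\theta_1-\theta_0)/\bigl(2(\theta_1+\theta_0)\sqrt{2\theta_0M}\bigr)$ explicit, which the reader would otherwise have to verify. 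All constants and signs check out.
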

\begin{proof}
From \eqref{VCharLogExpan} we know
\begin{align*}
\ln\Psi_{V_T}(u)=-\frac{u^2}{2}+\left(\frac{iu^3}{\sqrt{2\theta_0M}}+\frac{iu(\theta_1-\theta_0)N} {2(\theta_1+\theta_0)\sqrt{2\theta_0M}}\right)T^{-1/2}+O(T^{-1}),
\end{align*}
which implies
\begin{align*}
\Psi_{I_T}(u)=\Psi_{V_T}(-u)=e^{-u^2/2}\left[1-\left(\frac{iu^3}{\sqrt{2\theta_0M}}+\frac{iu(\theta_1-\theta_0)N} {2(\theta_1+\theta_0)\sqrt{2\theta_0M}}\right)T^{-1/2}+O(T^{-1})\right].
\end{align*}
Consequently,
\begin{align*}
\phi_1(u)=-e^{-u^2/2}\left(\frac{iu^3}{\sqrt{2\theta_0M}}+\frac{iu(\theta_1-\theta_0)N} {2(\theta_1+\theta_0)\sqrt{2\theta_0M}}\right).
\end{align*}
Thus,
\begin{align*}
\alpha_1(\delta)=&(2\pi)^{-1/2}e^{-q_{\alpha}^2/2}\delta+F_1(q_{\alpha}) \\
=&(2\pi)^{-1/2}e^{-q_{\alpha}^2/2}\delta+\int_{-\infty}^{q_{\alpha}}\int_{\mathbb{R}}(2\pi)^{-1}e^{-iuv}\phi_1(u)dudv \\
=&(2\pi)^{-1/2}e^{-q_{\alpha}^2/2}\delta + \frac{e^{-q_{\alpha}^2/2}}{2\sqrt{\pi M\theta_0}}\left(\frac{(\theta_1-\theta_0)N}{2(\theta_1+\theta_0)}+1-q_{\alpha}^2\right).
\end{align*}

\end{proof}

Finally, with the key asymptotic \eqref{eq:Rdelta-NullAsym} at hand, that depicts the precise first nontrivial rate of convergence of Type~I error to $\alpha$, we consider the class of test with type one error converging to $\alpha$, with the rate of convergence at least $\alpha_1T^{-1/2}$. Namely, we define the following class of tests (also defined in Section~\ref{sec:mainResultsTime}, but for convenience we repeat it here too)
\begin{align}\label{def:NewAsymClass}
\mathcal{K}_{\alpha}^\sharp(\delta)=\left\{(R_T): \limsup_{T\to\infty}\left(\bP^{N,T}_{\theta_0}(R_T)-\alpha\right)\sqrt{T}\le\alpha_1(\delta)\right\}.
\end{align}

Now we ready to prove the main result for the case of large time asymptotics: for any $\delta\in\bR$, the rejection region $(R_T^\delta)$ is asymptotically the most powerful in the class $\mathcal{K}^\sharp_{\alpha}(\delta)$.

\begin{proof}[Proof of Theorem~\ref{th:MainResult1}]
The fact that $(R_T^\delta)\in\mathcal{K}^\sharp_{\alpha}(\delta)$ follows immediately from  \eqref{eq:Rdelta-NullAsym}.

Using the representation \eqref{eq:RTDelta} of $R_T^\delta$, and following the lines of the proof of Theorem~\ref{th:NPLemma}, for any $(R_T)\in\mathcal{K}^\sharp_{\alpha}$, we have
\begin{align*}
\frac{1-\bP^{N,T}_{\theta_1}(R_T)}{1-\bP^{N,T}_{\theta_1}(R_T^\delta)} \ge1+\frac{c^\delta_{\alpha}(T)}{1-\bP^{N,T}_{\theta_1}(R_T^\delta)}\left(\bP^{N,T}_{\theta_0}(R_T^\delta)-\bP^{N,T}_{\theta_0}(R_T)\right).
\end{align*}
Taking here the liminf of both sides, we deduce
\begin{align*}
\underset{T\to\infty}{\liminf}\frac{1-\bP^{N,T}_{\theta_1}(R_T)}{1-\bP^{N,T}_{\theta_1}(R_T^\delta)} \ge&1+\liminf_{T\to\infty}\frac{c^\delta_{\alpha}(T)}{1-\bP^{N,T}_{\theta_1}(R_T^\delta)}\left(\bP^{N,T}_{\theta_0}(R_T^\delta)-\alpha\right)\\
&\quad-\limsup_{T\to\infty}\frac{c^\delta_{\alpha}(T)}{1-\bP^{N,T}_{\theta_1}(R_T^\delta)}\left(\bP^{N,T}_{\theta_0}(R_T)-\alpha\right).
\end{align*}
By the same method used in Lemma \ref{th:MorePowerTests}, we can still obtain
\begin{align*}
c^\delta_{\alpha}(T)/\left(1-\bP^{N,T}_{\theta_1}(R_{T}^\delta)\right)\sim \sqrt{T},
\end{align*}
which permits us to set $C=\displaystyle\lim_{T\to\infty}\frac{c^\delta_{\alpha}(T)}{\left(1-\bP^{N,T}_{\theta_1}(R_{T}^\delta)\right)\sqrt{T}}$. Then we may deduce
\begin{align*}
\liminf_{T\to\infty}\frac{c^\delta_{\alpha}(T)}{1-\bP^{N,T}_{\theta_1}(R_T^\delta)}\left(\bP^{N,T}_{\theta_0}(R_T^\delta)-\alpha\right)&= C\lim_{T\to\infty}\left(\bP^{N,T}_{\theta_0}(R_T^\delta)-\alpha\right)\sqrt{T}=C\alpha_1,
\\
\limsup_{T\to\infty}\frac{c^\delta_{\alpha}(T)}{1-\bP^{N,T}_{\theta_1}(R_T^\delta)}\left(\bP^{N,T}_{\theta_0}(R_T)-\alpha\right)&\le C\limsup_{T\to\infty}\left(\bP^{N,T}_{\theta_0}(R_T)-\alpha\right)\sqrt{T}\le C\alpha_1.
\end{align*}
To sum up all the above, we obtain
\begin{align*}
\liminf_{T\to\infty}\frac{1-\bP^{N,T}_{\theta_1}(R_T)}{1-\bP^{N,T}_{\theta_1}(R_T^\delta)}\ge1.
\end{align*}
The proof is finished.
\end{proof}

\begin{remark}
Note that in particular, for $\delta=0$,  we have that
$R_T^\sharp=R^0_T=\{U_T^N: I_T\le q_{\alpha}\}$,
although $\cK_\alpha^*\neq \cK_\alpha^\sharp(0)$.
\end{remark}

\subsection{Large number of Fourier modes: proofs}\label{sec:AsympMethodN}
In this section we will take a similar asymptotic approach but with number of Fourier coefficients $N\to\infty$ , and for a fixed time horizon $T$.
While the general ideas are parallel to those from Section~\ref{ssec:AsymptoticT}, the corresponding technical results are proved essentially from scratch, using different technics.
The main challenge is due to the fact that there are no existing results on `large deviations' for $N\to\infty$.
However, we want to stress out that those general ideas from large time asymptotic regime gave us the important insights on how to study the case $N\to\infty$.

As before, we are interested in finding a class of rejection regions such that the rejection region of the form
\begin{align}\label{def:RNStar}
\widetilde{R}_N=\left\{U_T^N: L(\theta_0,\theta_1,U_T^N)\ge \widetilde{c}_{\alpha}(N)\right\},
\end{align}
where $\widetilde{c}_{\alpha}(N)$ is a positive sequence indexed by $N$, is asymptotically the most powerful in that class. First, let us derive heuristically the form of $\widetilde{c}_\alpha(N)$.
Clearly, we are looking for tests such that $\bP_{\theta_0}^{N,T}(\widetilde{R}_N)\to\alpha$.
By similar argumentations to \eqref{eq:heuristT},  we have
\begin{align}\label{eq:heuristN}
\bP^{N,T}_{\theta_0}&(L(\theta_0,\theta_1,U_T^N)\ge \widetilde{c}_{\alpha}(N))\notag \\
=&\bP^{N,T}_{\theta_0}\left(\frac{\sqrt{\theta_0}(\theta_1-\theta_0)}{\sigma^2\sqrt{2TM}(\theta_1+\theta_0) }\left(\sum_{k=1}^N\lambda_k^{2\beta+2\gamma}u_k^2(T)-\frac{\sigma^2}{2\theta_0}N\right)-\frac{\sqrt{2\theta_0}}{\sigma\sqrt{TM} }\sum_{k=1}^N\lambda_k^{2\beta+\gamma}\int_0^Tu_kdw_k\right.\notag \\
&\qquad\qquad\qquad\qquad\left.\ge \frac{\sqrt{8\theta_0^3}\ln \widetilde{c}_{\alpha}(N)}{\sqrt{TM}(\theta_1^2-\theta_0^2)}+\frac{\sqrt{2\theta_0TM}(\theta_1-\theta_0) }{2(\theta_1+\theta_0)}-\frac{(\theta_1-\theta_0)N}{\sqrt{8\theta_0TM}(\theta_1+\theta_0)}\right)\notag \\
=&\bP^{N,T}_{\theta_0}\left(X^N-Y^N\ge \frac{\sqrt{8\theta_0^3}\ln \widetilde{c}_{\alpha}(N)}{\sqrt{TM}(\theta_1^2-\theta_0^2)}+\frac{\sqrt{2\theta_0TM}(\theta_1-\theta_0) }{2(\theta_1+\theta_0)}-\frac{(\theta_1-\theta_0)N}{\sqrt{8\theta_0TM}(\theta_1+\theta_0)}\right),
\end{align}
where
\begin{align*}
X^N:=&\frac{\sqrt{\theta_0}(\theta_1-\theta_0)}{\sigma^2\sqrt{2TM}(\theta_1+\theta_0) }\left(\sum_{k=1}^N\lambda_k^{2\beta+2\gamma}u_k^2(T)-\frac{\sigma^2}{2\theta_0}N\right),
\\
Y^N:=&\frac{\sqrt{2\theta_0}}{\sigma\sqrt{TM} }\sum_{k=1}^N\lambda_k^{2\beta+\gamma}\int_0^Tu_kdw_k.
\end{align*}
Notice that, the quantity $M=\sum_{k=1}^N\lambda_k^{2\beta}$ depends on our main variable $N$, in contrast to Section~\ref{ssec:AsymptoticT} where it was just a constant independent of $T$.
From the above, for $\delta\in\bR$, we have
\begin{align}\label{eq:ProbSplitN1}
\bP^{N,T}_{\theta_0}&(L(\theta_0,\theta_1,U_T^N)\ge \widetilde{c}_{\alpha}(N))+\bP^{N,T}_{\theta_0}\left(-X^N\ge\delta\right)\notag \\
\ge&\bP^{N,T}_{\theta_0}\left(-Y^N\ge \frac{\sqrt{8\theta_0^3}\ln \widetilde{c}_{\alpha}(N)}{\sqrt{TM}(\theta_1^2-\theta_0^2)}+\frac{\sqrt{2\theta_0TM}(\theta_1-\theta_0) }{2(\theta_1+\theta_0)}-\frac{(\theta_1-\theta_0)N}{\sqrt{8\theta_0TM}(\theta_1+\theta_0)}+\delta\right),
\end{align}
and
\begin{align}\label{eq:ProbSplitN2}
&\bP^{N,T}_{\theta_0}(L(\theta_0,\theta_1,U_T^N)\ge \widetilde{c}_{\alpha}(N))\le\bP^{N,T}_{\theta_0}\left(X^N\ge\delta\right)\notag
\\
&+\bP^{N,T}_{\theta_0}\left(-Y^N\ge \frac{\sqrt{8\theta_0^3}\ln \widetilde{c}_{\alpha}(N)}{\sqrt{TM}(\theta_1^2-\theta_0^2)}+\frac{\sqrt{2\theta_0TM}(\theta_1-\theta_0) }{2(\theta_1+\theta_0)}-\frac{(\theta_1-\theta_0)N}{\sqrt{8\theta_0TM}(\theta_1+\theta_0)}-\delta\right).
\end{align}
By Lemma~\ref{lemma:YN}(i) we have that  $Y^N\overset{d}{\rightarrow}\mathcal{N}(0,1)$ as $N\to\infty$. Using Lemma~\ref{lemma:YN}(ii), and by taking $\delta=N^{-\beta/2d}$, we deduce
\begin{align*}
\bP^{N,T}_{\theta_0}\left(X^N\ge\delta\right)\to0,\quad\textrm{ as $N\to\infty$.}
\end{align*}
Thus, it is reasonable to choose $\widetilde{c}_{\alpha}(N)$ such that
\begin{align*}
\frac{\sqrt{8\theta_0^3}\ln \widetilde{c}_{\alpha}(N)}{\sqrt{TM}(\theta_1^2-\theta_0^2)}+\frac{\sqrt{2\theta_0TM}(\theta_1-\theta_0) }{2(\theta_1+\theta_0)}-\frac{(\theta_1-\theta_0)N}{\sqrt{8\theta_0TM}(\theta_1+\theta_0)}=-q_{\alpha}.
\end{align*}
Hence, we take
\begin{align}\label{eq:cAlphahat}
\widehat{c}_{\alpha}(N)=\exp\left(-\frac{(\theta_1-\theta_0)^2 TM}{4\theta_0}+\frac{(\theta_1-\theta_0)^2N}{8\theta_0^2 }-\frac{\sqrt{TM}(\theta_1^2-\theta_0^2)}{\sqrt{8\theta_0^3}}q_{\alpha}\right).
\end{align}
We denote by $(\widehat{R}_N)$  the rejection region given by \eqref{def:RNStar} with $\widehat{c}_{\alpha}$ as in \eqref{eq:cAlphahat}.
Similarly as in $T$ part, we will see that $(\widehat{R}_N)$ is not asymptotically the most powerful in $\widetilde{\mathcal{K}}_{\alpha}$.
However, first we will present some results on sharp large deviation bounds for large $N$.

\subsubsection{Sharp large deviation principle, large $N$}

Akin to \eqref{eq:notationLHR} and \eqref{MomentLLRSplit}, we define
\begin{align*}
\mathcal{L}_N(\epsilon):=&M^{-1}\ln\bE_{\theta_1}\left[\exp\left(\epsilon\ln L(\theta_0,\theta_1,U_T^N)\right)\right] =\widetilde{\mathcal{L}}(\epsilon)+NM^{-1}\widetilde{\mathcal{H}}(\epsilon)+M^{-1}\widetilde{\mathcal{R}}_N(\epsilon),
\end{align*}
where
\begin{align*}
\widetilde{\mathcal{L}}(\epsilon)=&\frac{T}{2}\left(\theta_1+(\theta_1-\theta_0)\epsilon -\sqrt{\theta_1^2+(\theta_1^2-\theta_0^2)\epsilon}\right),
\\
\widetilde{\mathcal{H}}(\epsilon)=&-\frac{1}{2}\ln\left(\frac{1}{2}+\frac{1}{2}\mathcal{D}(\epsilon)\right),
\\
\widetilde{\mathcal{R}}_N(\epsilon)=&-\frac{1}{2}\sum_{k=1}^N\ln\left(1+\frac{1-\mathcal{D}(\epsilon)}{1+\mathcal{D}(\epsilon)} \exp\left(-2\lambda_k^{2\beta}T\sqrt{\theta_1^2+(\theta_1^2-\theta_0^2)\epsilon}\right)\right).
\end{align*}
We also define
\begin{align}\label{62}
\widetilde{A}_N=&\exp\left[M(\mathcal{L}_N(\widetilde{\epsilon}_{\eta})-\eta\widetilde{\epsilon}_{\eta})\right],\notag
\\
\widetilde{B}_N=&\bE_N\left(\exp\left[-\widetilde{\epsilon}_{\eta}(\ln L(\theta_0,\theta_1,U_T^N)-\eta M)\right]\1_{\{\ln L(\theta_0,\theta_1,U_T^N)\le\eta^* M\}}\right),
\end{align}
where $\eta$ and $\eta^*$ are some numbers which may depend on $N$, and $\bE_N$ is the expectation after the change of probability measure
\begin{align}\label{eq:MeasureChange2}
\frac{d\bQ_N}{d\bP^{N,T}_{\theta_1}}=\exp\left(\widetilde{\epsilon}_{\eta}\ln L(\theta_0,\theta_1,U_T^N)-M\mathcal{L}_N(\widetilde{\epsilon}_{\eta})\right).
\end{align}
Clearly,
\begin{align*}
\bP^{N,T}_{\theta_1}\left(\ln L(\theta_0,\theta_1,U_T^N)\le\eta^* M\right)=\widetilde{A}_N\widetilde{B}_N.
\end{align*}
Naturally, by taking $\widetilde{\epsilon}_{\eta}$ such that $\widetilde{\mathcal{L}}'(\widetilde{\epsilon}_{\eta})=\eta$, we get
\begin{align}\label{eq:epsilonT}
\widetilde{\epsilon}_{\eta}=\frac{(\theta_1^2-\theta_0^2)^2T^2-4\theta_1^2(-2\eta+(\theta_1-\theta_0)T)^2}{4(\theta_1^2-\theta_0^2)(-2\eta+(\theta_1-\theta_0)T)^2},
\end{align}
and then by direct evaluations we find
\begin{align}\label{64}
\widetilde{A}_N=&\exp\left[M(\mathcal{L}(\widetilde{\epsilon}_{\eta})-\eta\widetilde{\epsilon}_{\eta})\right]\exp\left[N\widetilde{\mathcal{H}} (\widetilde{\epsilon}_{\eta})+\widetilde{\mathcal{R}}_N(\widetilde{\epsilon}_{\eta})\right]\notag
\\
=&\exp\left(-\widetilde{I}(\eta)M\right)\left(\frac{1}{2}+\frac{1}{2}\mathcal{D}(\widetilde{\epsilon}_{\eta})\right)^{-N/2}\notag
\\
&\qquad\qquad\prod_{k=1}^N\left(1+\frac{1-\mathcal{D}(\widetilde{\epsilon}_{\eta})}{1+\mathcal{D}(\widetilde{\epsilon}_{\eta})} \exp\left(-2\lambda_k^{2\beta}T\sqrt{\theta_1^2+(\theta_1^2-\theta_0^2)\widetilde{\epsilon}_{\eta}}\right)\right)^{-1/2},
\end{align}
where
\begin{align*}
\widetilde{I}(\eta)=-\frac{(4\theta_1\eta-(\theta_1-\theta_0)^2T)^2}{8(2\eta-(\theta_1-\theta_0)T)(\theta_1^2-\theta_0^2)}.
\end{align*}
Next we present three technical lemmas with their proofs deferred to the Appendix~\ref{TechLemma}.

\begin{lemma}\label{lemma:CharExpan-VN}
Under probability measure $\bQ_N$, the following holds true
\begin{align*}
\widetilde{V}_N:=\frac{\ln L(\theta_0,\theta_1,U_T^N)-\eta M-\widetilde{\mathcal{H}}'(\widetilde{\epsilon}_{\eta})N}{\zeta_{\eta}\sqrt{M}}\overset{d}{\longrightarrow}\mathcal{N}(0,1), \quad\textrm{ as $N\to\infty$},
\end{align*}
where $\eta<\frac{(\theta_1-\theta_0)^2 T}{4\theta_1}$, and
\begin{align*}
\zeta_{\eta}^2=\widetilde{\mathcal{L}}''(\widetilde{\epsilon}_{\eta})=\frac{(-2\eta+(\theta_1-\theta_0)T)^3}{(\theta_1^2-\theta_0^2)T^2}.
\end{align*}
More precisely, for large enough $N$, the characteristic function of $\widetilde{V}_N$ under measure $\bQ_N$, denoted by $\widetilde{\Psi}_N$, has the following expansion
\begin{align}\label{eq:CharExpan-VN}
\widetilde{\Psi}_N(u)=\exp\left(-\frac{u^2}{2}\right) &\left[1-\frac{i\widetilde{\mathcal{L}}^{(3)}(\widetilde{\epsilon}_{\eta})u^3}{6\zeta_{\eta}^3\sqrt{M}} -\frac{\widetilde{\mathcal{H}}''(\widetilde{\epsilon}_{\eta})Nu^2}{2\zeta_{\eta}^2M} +\frac{C_\eta u}{4\pi\zeta_{\eta}\sqrt{M}}\right.\notag
\\
&\qquad\quad\qquad\left.+O\left(\frac{|u|+|u|^6}{M}+\frac{|u|^3N}{M^{3/2}}+\frac{u^4N^2}{M^2}\right)\right],
\end{align}
where $C_\eta$ is some constant depending only on $\eta$. Moreover, the remainder is uniformly bounded in $u$ as long as $|u|=O(N^{\nu})$, $\nu=\min\{1/6+\beta/(3d),\beta/d\}$.
\end{lemma}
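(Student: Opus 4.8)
The plan is to transcribe the argument used for Lemma~\ref{lemma:CharExpan-VT}, now with $M=\sum_{k=1}^N\lambda_k^{2\beta}$ playing the role of the large parameter; the new feature is that $N$ enters as an independent secondary scale, tied to $M$ through the Weyl asymptotics \eqref{eq:asymEigenv} via $M\sim N^{1+2\beta/d}$. First I would record, from the change of measure \eqref{eq:MeasureChange2}, the exact identity
\begin{align*}
\widetilde{\Psi}_N(u)=\exp\left(-\frac{iu\big(\eta M+\widetilde{\mathcal{H}}'(\widetilde{\epsilon}_{\eta})N\big)}{\zeta_{\eta}\sqrt{M}}+M\Big(\mathcal{L}_N\big(\widetilde{\epsilon}_{\eta}+\tfrac{iu}{\zeta_{\eta}\sqrt{M}}\big)-\mathcal{L}_N(\widetilde{\epsilon}_{\eta})\Big)\right),
\end{align*}
valid for $N$ large enough that $\widetilde{\epsilon}_{\eta}+\tfrac{iu}{\zeta_{\eta}\sqrt{M}}$ remains in the complex domain of analyticity of $\mathcal{L}_N$; the hypothesis $\eta<\tfrac{(\theta_1-\theta_0)^2T}{4\theta_1}$ ensures, through \eqref{eq:epsilonT}, that $\widetilde{\epsilon}_{\eta}>-\theta_1^2/(\theta_1^2-\theta_0^2)$, so that $\theta_1^2+(\theta_1^2-\theta_0^2)\widetilde{\epsilon}_{\eta}>0$ and every logarithm in $\widetilde{\mathcal{R}}_N$ has argument bounded away from zero.

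Then I would control $\widetilde{\mathcal{R}}_N$ and its derivatives. Since $\lambda_k^{2\beta}\sim\varpi^\beta k^{2\beta/d}\to\infty$, for $\epsilon$ in a fixed neighborhood of $\widetilde{\epsilon}_{\eta}$ the $k$-th summand of $\widetilde{\mathcal{R}}_N^{(j)}(\epsilon)$ is dominated by $\mathrm{poly}(\lambda_k^{2\beta})\,e^{-ck^{2\beta/d}}$, which is summable; hence $\widetilde{\mathcal{R}}_N^{(j)}(\widetilde{\epsilon}_{\eta})=O(1)$ (in fact convergent) for each $j$, and consequently $\mathcal{L}_N^{(j)}(\widetilde{\epsilon}_{\eta})=\widetilde{\mathcal{L}}^{(j)}(\widetilde{\epsilon}_{\eta})+\tfrac{N}{M}\widetilde{\mathcal{H}}^{(j)}(\widetilde{\epsilon}_{\eta})+O(M^{-1})$. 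Inserting the Taylor expansion of $\mathcal{L}_N$ about $\widetilde{\epsilon}_{\eta}$ into the identity above: the first-order term cancels the prefactor by virtue of $\widetilde{\mathcal{L}}'(\widetilde{\epsilon}_{\eta})=\eta$ and the centering of $\widetilde{V}_N$ by $\eta M+\widetilde{\mathcal{H}}'(\widetilde{\epsilon}_{\eta})N$, leaving only the $O(M^{-1/2})$ residual produced by $\widetilde{\mathcal{R}}_N'(\widetilde{\epsilon}_{\eta})$, which is the source of the linear term $C_\eta u/(4\pi\zeta_\eta\sqrt{M})$; the second-order term gives $-u^2/2$ together with the correction $-\widetilde{\mathcal{H}}''(\widetilde{\epsilon}_{\eta})Nu^2/(2\zeta_{\eta}^2M)$ and a negligible $O(u^2/M)$; the third-order term gives $-i\widetilde{\mathcal{L}}^{(3)}(\widetilde{\epsilon}_{\eta})u^3/(6\zeta_{\eta}^3\sqrt{M})$ and $O(|u|^3N/M^{3/2})$; and each $j$-th order term with $j\ge4$ contributes $O(|u|^jM^{1-j/2})+O(|u|^jNM^{-j/2})$. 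This produces a log-expansion for $\ln\widetilde{\Psi}_N(u)$ of the same shape as \eqref{VCharLogExpan}.

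Finally I would exponentiate. Writing $\ln\widetilde{\Psi}_N(u)=-u^2/2+\rho_N(u)$ and using $e^{\rho_N}=1+\rho_N+\tfrac12\rho_N^2+\cdots$, the terms linear in $\rho_N$ reproduce the displayed corrections in \eqref{eq:CharExpan-VN}, while $\rho_N^2$ and the cross terms generate the mixed remainders: squaring the cubic term yields $O(u^6/M)$, squaring the $O(Nu^2/M)$ correction yields $O(u^4N^2/M^2)$, and the remaining cross terms are absorbed into $O(|u|/M)+O(|u|^3N/M^{3/2})$. To pass rigorously from the expansion of the logarithm to that of $\widetilde{\Psi}_N$ --- controlling the tail of the Taylor series --- I would use the same device from \citet[Lemma 2 p.72]{Cramer1970} invoked in Lemma~\ref{lemma:CharExpan-VT}, which in turn requires an $L^2$-type control $|\widetilde{\Psi}_N(u)|\le(1+\mu_1'u^2/M)^{-\mu_2'M}$, the $N$-analogue of \eqref{eq:CharExControl}, obtained from the explicit per-mode product representation of $\bE_{\theta_1}[\exp((\widetilde{\epsilon}_{\eta}+it)\ln L(\theta_0,\theta_1,U_T^N))]$ over the independent Ornstein--Uhlenbeck coordinates. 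For the uniform-boundedness claim, the remainder stays bounded precisely when $|u|^6/M$ and $|u|^4N^2/M^2$ are bounded, i.e. (with $M\sim N^{1+2\beta/d}$) when $6\nu\le1+2\beta/d$ and $4\nu\le4\beta/d$, that is $\nu=\min\{1/6+\beta/(3d),\beta/d\}$.

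The main obstacle I anticipate is the bookkeeping of the two coupled scales $N$ and $M$: ensuring that every product of powers of $N$ and $M$ arising through differentiation of $\mathcal{L}_N$ and through exponentiation is assigned either to a named term of \eqref{eq:CharExpan-VN} or to the remainder, and that the threshold $\nu$ is exactly the one dictated by the worst of these (the $u^6/M$ and $u^4N^2/M^2$ contributions). The one genuinely new technical ingredient relative to the $T$-case is the $L^2$ control of the $N$-indexed family $\widetilde{\Psi}_N$, since the Bercu--Rouault bounds from \citet{BercuRouault2001} are stated for a single process and must be re-derived uniformly over the growing collection of modes.
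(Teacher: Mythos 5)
Your proposal follows essentially the same route as the paper: the same identity for $\widetilde{\Psi}_N$ from \eqref{eq:MeasureChange2}, the same separation of $\mathcal{L}_N$ into the $\widetilde{\mathcal{L}}$, $\widetilde{\mathcal{H}}$, $\widetilde{\mathcal{R}}_N$ contributions with the same origins for each displayed correction (including $C_\eta$ coming from the first derivative of $\widetilde{\mathcal{R}}_N$, which converges by the exponential decay in $k$), and the same reading-off of $\nu$ from the $|u|^6/M$ and $u^4N^2/M^2$ terms. The only presentational difference is that the paper factorizes $\widetilde{\Psi}_N=J_1^NJ_2^NJ_3^N$ and expands the three factors separately, making the $k$- and $u$-uniform control of the $\widetilde{\mathcal{R}}_N$-remainders explicit via the Cauchy-integral form of the Taylor remainder (Proposition~\ref{prop:taylorExpension}), and it defers the $L^2$-type bound \eqref{eq:ExControl-PsiN} on $\widetilde{\Psi}_N$ to the proof of Lemma~\ref{lemma:BN-AsymEst}.
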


\begin{lemma}\label{lemma:BN-AsymEst}
For
\begin{align*}
\eta=-\frac{(\theta_1-\theta_0)^2}{4\theta_0}T,\qquad \eta^*=-\frac{(\theta_1-\theta_0)^2 T}{4\theta_0}+\frac{(\theta_1-\theta_0)^2N}{8\theta_0^2M} -\frac{\sqrt{T}(\theta_1^2-\theta_0^2)}{\sqrt{8\theta_0^3M}}q_{\alpha} -\frac{\sqrt{T}(\theta_1^2-\theta_0^2)}{\sqrt{8\theta_0^3}M}\delta,
\end{align*}
as $N\to\infty$, the following asymptotic holds true
\begin{align*}
\widetilde{B}_N\sim \exp\left[(\eta^*-\eta)M\right]/\sqrt{M}.
\end{align*}
\end{lemma}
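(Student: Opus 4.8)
The plan is to run the same argument as in the proof of Lemma~\ref{lemma:BT-AsymEst}, with the large parameter $T$ replaced throughout by $M=\sum_{k=1}^N\lambda_k^{2\beta}$ and with Lemma~\ref{lemma:CharExpan-VN} playing the role of Lemma~\ref{lemma:CharExpan-VT}. First I would evaluate the relevant constants at $\eta=-\frac{(\theta_1-\theta_0)^2}{4\theta_0}T$: from \eqref{eq:epsilonT} one obtains $\widetilde\epsilon_\eta=-1$; a direct computation gives $\zeta_\eta^2=\widetilde{\mathcal L}''(-1)=\frac{(\theta_1^2-\theta_0^2)^2T}{8\theta_0^3}$; and differentiating $\widetilde{\mathcal H}$ (using $\mathcal D(-1)=1$) yields the key identity $\widetilde{\mathcal H}'(\widetilde\epsilon_\eta)=\frac{(\theta_1-\theta_0)^2}{8\theta_0^2}$. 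Substituting these into the prescribed $\eta^*$ reveals that the deterministic $N$-shift $\widetilde{\mathcal H}'(\widetilde\epsilon_\eta)N$ cancels exactly the term $\frac{(\theta_1-\theta_0)^2N}{8\theta_0^2M}$ appearing in $\eta^*$, so that the normalized upper endpoint
\begin{align*}
\kappa_N:=\frac{(\eta^*-\eta)M-\widetilde{\mathcal H}'(\widetilde\epsilon_\eta)N}{\zeta_\eta\sqrt M}=-q_\alpha-\frac{\delta}{\sqrt M}\ \longrightarrow\ -q_\alpha,\qquad N\to\infty.
\end{align*}

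Next, write $\ln L(\theta_0,\theta_1,U_T^N)=\eta M+\widetilde{\mathcal H}'(\widetilde\epsilon_\eta)N+\zeta_\eta\sqrt M\,\widetilde V_N$, so that $\{\ln L(\theta_0,\theta_1,U_T^N)\le\eta^*M\}=\{\widetilde V_N\le\kappa_N\}$ and, from \eqref{62},
\begin{align*}
\widetilde B_N=e^{-\widetilde\epsilon_\eta\widetilde{\mathcal H}'(\widetilde\epsilon_\eta)N}\,\bE_N\!\left(\exp\!\big(-\widetilde\epsilon_\eta\zeta_\eta\sqrt M\,\widetilde V_N\big)\1_{\{\widetilde V_N\le\kappa_N\}}\right).
\end{align*}
As in Lemma~\ref{lemma:BT-AsymEst} one first establishes, for all large $N$, an estimate of the form $|\widetilde\Psi_N(u)|\le(1+\mu_1u^2/M)^{-\mu_2 M}$ with $\mu_1,\mu_2>0$ independent of $N$ (obtained from the representation of $\widetilde\Psi_N$ in the spirit of \eqref{eq:Char-VT} together with the expansion in Lemma~\ref{lemma:CharExpan-VN}, cf.\ \eqref{eq:CharExControl}); in particular $\widetilde\Psi_N\in L^2(\bR)$, so one may apply Fourier inversion to the law of $\widetilde V_N$ under $\bQ_N$, interchange the two integrations (legitimate because $\widetilde\epsilon_\eta\zeta_\eta\sqrt M<0$ makes the inner $v$-integral absolutely convergent), and evaluate the elementary integral over $(-\infty,\kappa_N]$. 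The factor $\exp(-\widetilde\epsilon_\eta\zeta_\eta\sqrt M\kappa_N)=\exp\!\big(-\widetilde\epsilon_\eta(\eta^*-\eta)M\big)\exp\!\big(\widetilde\epsilon_\eta\widetilde{\mathcal H}'(\widetilde\epsilon_\eta)N\big)$ produced there cancels the prefactor $e^{-\widetilde\epsilon_\eta\widetilde{\mathcal H}'(\widetilde\epsilon_\eta)N}$, leaving
\begin{align*}
\widetilde B_N=-\frac{\exp\!\big(-\widetilde\epsilon_\eta(\eta^*-\eta)M\big)}{2\pi\,\widetilde\epsilon_\eta\zeta_\eta\sqrt M}\int_{\bR}\Big(1+\frac{iu}{\widetilde\epsilon_\eta\zeta_\eta\sqrt M}\Big)^{-1}e^{-iu\kappa_N}\,\widetilde\Psi_N(u)\,du.
\end{align*}

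Finally, inserting $\widetilde\epsilon_\eta=-1$ turns the prefactor into $\exp\!\big((\eta^*-\eta)M\big)/(2\pi\zeta_\eta\sqrt M)$. Inside the integral $(1-iu/(\zeta_\eta\sqrt M))^{-1}\to1$ and $e^{-iu\kappa_N}\to e^{iq_\alpha u}$ pointwise, and $\widetilde\Psi_N(u)\to e^{-u^2/2}$ pointwise by Lemma~\ref{lemma:CharExpan-VN}; the bound on $|\widetilde\Psi_N|$ furnishes an $N$-uniform integrable majorant by the same splitting (small $|u|$ versus large $|u|$) as in Lemma~\ref{lemma:BT-AsymEst}, so dominated convergence gives $\int_{\bR}(\cdots)\,du\to\int_{\bR}e^{-u^2/2}e^{iq_\alpha u}\,du=\sqrt{2\pi}\,e^{-q_\alpha^2/2}$, a finite nonzero constant. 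Hence $\widetilde B_N\sim\big((2\pi)^{-1/2}e^{-q_\alpha^2/2}\zeta_\eta^{-1}\big)\exp\!\big((\eta^*-\eta)M\big)/\sqrt M$, which is the claim, the nonzero finite constant being absorbed into the notation $\sim$. The main obstacle is the $N$-uniform tail bound on $\widetilde\Psi_N$: unlike the $T$-case, the characteristic function carries the deterministic centering $\widetilde{\mathcal H}'(\widetilde\epsilon_\eta)N$ and the summands involve the unbounded eigenvalues $\lambda_k^{2\beta}$, so one must control $M\big(\mathcal L_N(\widetilde\epsilon_\eta+iu/(\zeta_\eta\sqrt M))-\mathcal L_N(\widetilde\epsilon_\eta)\big)$ uniformly over the admissible range of $u$ while keeping track of the remainder $\widetilde{\mathcal R}_N$; this is precisely the lengthy estimate that underlies Lemma~\ref{lemma:CharExpan-VN}, and where the eigenvalue asymptotics \eqref{eq:asymEigenv} enter.
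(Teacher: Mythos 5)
Your proposal is correct and follows essentially the same route as the paper: compute $\widetilde\epsilon_\eta=-1$, $\zeta_\eta^2=(\theta_1^2-\theta_0^2)^2T/(8\theta_0^3)$, $\widetilde{\mathcal H}'(-1)=(\theta_1-\theta_0)^2/(8\theta_0^2)$, observe the exact cancellation of the $N$-shift so that the normalized endpoint is $-q_\alpha-\delta/\sqrt M$, invert the Fourier transform, interchange integrals, and use dominated convergence to show the residual integral tends to $\sqrt{2\pi}e^{-q_\alpha^2/2}$. One small correction in your commentary: the $N$-uniform estimate $|\widetilde\Psi_N(u)|\le C(1+\mu_1 u^2/M)^{-\mu_2 M}$ that you correctly identify as the key obstacle is established in the paper within the proof of this lemma itself (via the decomposition $\widetilde\Psi_N=J_1^NJ_2^NJ_3^N$ and bounds on each factor, ultimately pulled back from the $T$-case bound \eqref{eq:CharExControl} by rescaling), not inside Lemma~\ref{lemma:CharExpan-VN}; the latter only provides the local expansion and the uniform control of its remainder for $|u|=O(N^\nu)$. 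That estimate is the nontrivial piece your outline leaves unproved, but you flag it and identify the correct sources of difficulty, so the plan is sound.
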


\begin{lemma}\label{lemma:ProbExpan-IN}
For any $x,\delta\in\bR$, we have the following expansion,
\begin{align}\label{eq:ProbExpan-IN}
\bP^{N,T}_{\theta_0}\left(Y^N-X^N\le x+\delta M^{-1/2}\right)=&\Phi(x)+\Phi_1^\delta(x)M^{-1/2}+\Phi_2^\delta(x)NM^{-1}\notag
\\
&+\mathfrak{R}_N^{\delta}(x)\left(M^{-1}+NM^{-3/2}+N^2M^{-2}\right),
\end{align}
where $\Phi(\cdot)$ is the distribution of a standard Gaussian random variable, and
\begin{align*}
\Phi_1^\delta(x)=&\left[\frac{\theta_1-\theta_0}{4\sqrt{\pi\theta_0T}(\theta_1+\theta_0)} \left(\sum_{k=1}^{\infty} e^{-2\theta_0T\lambda_k^{2\beta}}\right)+\frac{1}{2\sqrt{\pi\theta_0T}}(1-x^2)+(2\pi)^{-1/2}\delta\right]e^{-x^2/2},
\\
\Phi_2^\delta(x)=&\frac{(\theta_1-\theta_0)(5\theta_1^2+6\theta_1\theta_0-3\theta_0^2)} {8\sqrt{2\pi}\theta_0(\theta_1+\theta_0)(\theta_1^2-\theta_0^2)T}xe^{-x^2/2},
\end{align*}
and $\mathfrak{R}_N^{\delta}(\cdot)$ is uniformly bounded in $x$.
\end{lemma}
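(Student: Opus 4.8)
The plan is to follow the lines of the proof of Lemma~\ref{lemma:ProbExpan-IT}, with Lemma~\ref{lemma:CharExpan-VN} now playing the role that Lemma~\ref{lemma:CharExpan-VT} played there. The first observation is that, under $\bP^{N,T}_{\theta_0}$, the statistic $Y^N-X^N$ is an affine function of $\ln L(\theta_0,\theta_1,U_T^N)$: inserting $u_k^2(T)=2\int_0^Tu_k\,du_k+\sigma^2\lambda_k^{-2\gamma}T$ and $du_k=-\theta_0\lambda_k^{2\beta}u_k\,dt+\sigma\lambda_k^{-\gamma}\,dw_k$ into \eqref{eq:11} rewrites $\ln L$ as a linear combination of $\sum_k\lambda_k^{2\beta+2\gamma}u_k^2(T)$ and $\sum_k\lambda_k^{2\beta+\gamma}\int_0^Tu_k\,dw_k$ plus a constant, and these are exactly the ingredients of $X^N$ and $Y^N$; comparing coefficients yields
\[
Y^N-X^N=-\frac{\sqrt{8\theta_0^3}}{(\theta_1^2-\theta_0^2)\sqrt{TM}}\,\ln L(\theta_0,\theta_1,U_T^N)+c_N
\]
for an explicit constant $c_N$. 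I then specialize Lemma~\ref{lemma:CharExpan-VN} to $\eta=-\frac{(\theta_1-\theta_0)^2}{4\theta_0}T$, for which a short computation gives $\widetilde\epsilon_\eta=-1$ and $\zeta_\eta=(\theta_1^2-\theta_0^2)\sqrt{T/(8\theta_0^3)}$; since $\mathcal L_N(-1)=M^{-1}\ln\bE_{\theta_1}[L^{-1}]=0$, the tilted measure $\bQ_N$ of \eqref{eq:MeasureChange2} coincides with $\bP^{N,T}_{\theta_0}$. Matching the deterministic parts shows moreover that $c_N$ is precisely such that $Y^N-X^N=-\widetilde V_N$ (the $\widetilde{\mathcal H}'(-1)N$ centering inside $\widetilde V_N$ is the $-\sigma^2N/(2\theta_0)$ correction built into $X^N$). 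Hence the characteristic function of $Y^N-X^N$ under $\bP^{N,T}_{\theta_0}$ equals $\widetilde\Psi_N(-u)$, so \eqref{eq:CharExpan-VN} supplies its full expansion.

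From here the argument parallels the $\delta=0$ part of Lemma~\ref{lemma:ProbExpan-IT}. Using the uniform estimate $|\widetilde\Psi_N(u)|\le(1+\mu_1u^2/M)^{-\mu_2M}$ (the analogue of \eqref{eq:CharExControl}, established in the appendix together with Lemma~\ref{lemma:CharExpan-VN}), the contribution of $\{|u|\ge CN^{\nu}\}$, $\nu=\min\{1/6+\beta/(3d),\beta/d\}$, is super-exponentially small in $N$ and is swept into the remainder, while on $\{|u|\le CN^{\nu}\}$ one uses \eqref{eq:CharExpan-VN}. Applying the inversion/smoothing device of \citet[Lemma~4 p.77, Theorem~12 p.33]{Cramer1970} and inverting the polynomial-times-Gaussian corrections term by term (each $u^je^{-u^2/2}$ integrates, under $\int_{-\infty}^x\frac1{2\pi}\int_{\bR}e^{-iuv}(\cdot)\,du\,dv$, to a multiple of a derivative of $\Phi$) turns the $O(M^{-1/2})$ and $O(NM^{-1})$ corrections of $\widetilde\Psi_N(-u)$ into functions $\Phi_1^0$ and $\Phi_2^0$, and produces a remainder, uniform in $x$, of order $M^{-1}+NM^{-3/2}+N^2M^{-2}$ — the three scales arising respectively from $\big(\int|u|^je^{-u^2/2}\,du\big)M^{-1}$, from the cross term of the $u^3$- and $u$-corrections, and from the square of the $u^2N/M$-correction. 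The explicit constants $\widetilde{\mathcal L}^{(3)}(-1)$, $\widetilde{\mathcal H}''(-1)$ and $C_\eta$ are then computed exactly as in Proposition~\ref{prop:alpha1}; here $C_\eta$ carries $\widetilde{\mathcal R}_N'(-1)$, whose limit as $N\to\infty$ is $-\frac{(\theta_1-\theta_0)^2}{8\theta_0^2}\sum_{k\ge1}e^{-2\theta_0T\lambda_k^{2\beta}}$ (the series converging by \eqref{eq:asymEigenv}), which is the source of the $\sum_k e^{-2\theta_0T\lambda_k^{2\beta}}$ term in $\Phi_1^\delta$. Carrying out the inversions reproduces exactly the stated $\Phi_1^0(x)$ and $\Phi_2^0(x)$.

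For $\delta\neq0$ one Taylor-expands $\Phi(x+\delta M^{-1/2})$, $\Phi_1^0(x+\delta M^{-1/2})$ and $\Phi_2^0(x+\delta M^{-1/2})$ about $x$, exactly as in the $\delta\neq0$ part of Lemma~\ref{lemma:ProbExpan-IT}: only $\Phi$ yields a new term at order $M^{-1/2}$, namely $(2\pi)^{-1/2}e^{-x^2/2}\,\delta\,M^{-1/2}$, which upgrades $\Phi_1^0$ to $\Phi_1^\delta$, whereas all remaining Taylor corrections (including the $NM^{-3/2}$-term coming from $\Phi_2^0$) are absorbed into $\mathfrak R_N^\delta(x)(M^{-1}+NM^{-3/2}+N^2M^{-2})$.

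The main obstacle is the error control in the inversion step: securing the uniform characteristic-function bound, checking that the window $|u|=O(N^{\nu})$ of validity of \eqref{eq:CharExpan-VN} is wide enough for the discarded tail to be $o(M^{-1})$, and keeping the three remainder scales straight. Via $M\sim\varpi^{\beta}N^{2\beta/d+1}/(2\beta/d+1)$ one has $N^2M^{-2}\sim N^{1-2\beta/d}M^{-1}$ and $NM^{-1}\sim N^{1/2-\beta/d}M^{-1/2}$, so the bookkeeping is comfortable when $\beta/d>1/2$ but genuinely tight when $\beta/d=1/2$ — precisely the regime in which the $\Phi_2$-term must be retained in Theorem~\ref{th:MainResult2}.
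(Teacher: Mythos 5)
Your proposal is correct and follows essentially the same route as the paper's proof: specialize Lemma~\ref{lemma:CharExpan-VN} to $\eta=-\frac{(\theta_1-\theta_0)^2}{4\theta_0}T$ (where $\widetilde\epsilon_\eta=-1$, $\bQ_N=\bP^{N,T}_{\theta_0}$, and $Y^N-X^N=-\widetilde V_N$, giving the characteristic-function identity $\widetilde\Psi_{I^N}(u)=\widetilde\Psi_N(-u)$), then invert via Cram\'er's smoothing device using the uniform bound of type \eqref{eq:CharExControl} to control the $|u|>CN^\nu$ tail, and finally Taylor-expand in $\delta$ to pass from $\Phi_1^0,\Phi_2^0$ to $\Phi_1^\delta,\Phi_2^\delta$. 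Your affine identification $Y^N-X^N=-\widetilde V_N$ and $\bQ_N=\bP^{N,T}_{\theta_0}$ is a more explicit spelling-out of what the paper compresses into the single line $\widetilde\Psi_{I^N}(u)=\widetilde\Psi_N(-u)$, but the argument is the same.
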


\subsubsection{New class of rejection regions}
Relation \eqref{eq:ProbExpan-IN} is the counterpart of \eqref{eq:Rdelta-NullAsym}, and by similarity we take the `refined' class of tests\footnote{The class of tests $\widehat{\mathcal{K}}_{\alpha}(\delta)$ was already defined in Section~\ref{sec:mainResultsSpace}, relation \eqref{def:Kbar}, but for convenience we repeat it here too.} $\widehat{\mathcal{K}}_{\alpha}(\delta)$ with Type~I error converging to $\alpha$ at rate at least $\widehat\alpha_1(\delta)M^{-1/2}$,
\begin{align}\label{def:Kbar-1}
\widehat{\mathcal{K}}_{\alpha}(\delta):=\left\{(R_N): \limsup_{N\to\infty}\left(\bP^{N,T}_{\theta_0}(R_N)-\alpha\right)\sqrt{M}\le\widehat{\alpha}_1(\delta)\right\},
\end{align}
where $\delta\in\bR$ is fixed, and
\begin{align}\label{def:tildeAlpha1-1}
\widehat{\alpha}_1(\delta):=\left\{\begin{array}{ccc}\Phi_1^\delta(q_\alpha),&\textrm{ if $\beta/d>1/2$} \\ \Phi_1^\delta(q_\alpha)+\sqrt{\frac{2\beta/d+1}{\varpi^\beta}}\Phi_2^\delta(q_\alpha),&\textrm{ if $\beta/d=1/2$}\end{array}\right.,
\end{align}
where $\Phi_j^\delta(\cdot), \ j=1,2$, are given by \eqref{eq:ProbExpan-IN}.

Note that the rate of convergence depends on the fractional power $\beta$ of the Laplace operator and the dimension $d$ of the space.
The reasons is that for $\beta/d>1/2$, $M^{-1/2}$ is the lowest order term, while for $\beta/d=1/2$, $M^{-1/2}$ and $NM^{-1}$ together make the lowest order term, which leads to different values of $\widehat{\alpha}_1(\delta)$.

Now we are ready to prove the main result for the case of large number of Fourier modes.

\begin{proof}[Proof of Theorem~\ref{th:MainResult2}]
Assume that $\beta/d\ge1/2$, and take a fixed $\delta\in\bR$.
We will prove that the test $\widehat{R}_N^\delta$, defined by  \eqref{eq:bestR_N}, is asymptotically the most powerful in $\widehat{\mathcal{K}}_{\alpha}(\delta)$.

It is straightforward  to show that
\begin{align}\label{def:RNdelta}
\widehat{R}_N^\delta=\left\{U_T^N: Y^N-X^N\le q_{\alpha}+\delta M^{-1/2}\right\}.
\end{align}

The fact that $(\widehat{R}_N^\delta)\in\widehat{\mathcal{K}}_{\alpha}(\delta)$ follows immediately from  \eqref{eq:ProbExpan-IN}.

From \eqref{62} and \eqref{64} we know that
\begin{align*}
\widehat{c}^\delta_{\alpha}(N)\big/\left(1-\bP^{N,T}_{\theta_1}(\widehat{R}_N^\delta)\right)=&\widehat{c}^ \delta_{\alpha}(N)\widetilde{A}_N^{-1}\widetilde{B}_N^{-1}
\\
=&\exp\left[\widetilde{I}(\eta)M+\eta^*M\right]\exp\left[N\widetilde{\mathcal{H}} (\widetilde{\epsilon}_{\eta})+\widetilde{\mathcal{R}}_N(\widetilde{\epsilon}_{\eta})\right]\widetilde{B}_N^{-1}
\\
=&\exp\left(\frac{(\theta_1-\theta_0)^2}{8\theta_0^2}N-\frac{\sqrt{T}(\theta_1^2-\theta_0^2) q_{\alpha}}{\sqrt{8\theta_0^3}}\sqrt{M}-\frac{\sqrt{T}(\theta_1^2-\theta_0^2)}{\sqrt{8\theta_0^3}}\delta\right)\widetilde{B}_N^{-1},
\end{align*}
where $\eta$ and $\eta^*$ are given in Lemma~\ref{lemma:BN-AsymEst}. Then, by Lemma~\ref{lemma:BN-AsymEst}, we can set \begin{align*}
C=\displaystyle\lim_{N\to\infty}\frac{\widehat{c}^\delta_{\alpha}(N)}{\left(1-\bP^{N,T}_{\theta_1}(\widehat{R}_N^\delta)\right)\sqrt{M}}.
\end{align*}
Following the same lines as in the proof of Theorem~\ref{th:MainResult1}, we deduce
\begin{align*}
\underset{N\to\infty}{\liminf} \frac{1-\bP^{N,T}_{\theta_1}(R_N)}{1-\bP^{N,T}_{\theta_1}(\widehat{R}_N^\delta)}\ge1,\qquad \textrm{ for all } (R_N)\in\widehat{\mathcal{K}}_{\alpha}(\delta).
\end{align*}
This ends the proof.
\end{proof}

\subsubsection{On non-optimality of class $\widetilde{\mathcal{K}}_{\alpha}$ }

In this section we will show that indeed there is no asymptotically most powerful test of likelihood ratio type in the class $\widetilde{\mathcal{K}}_{\alpha}$.
Parallel to Theorem~\ref{th:regRegAT}, we have the following result.
\begin{theorem}\label{th:regRegAN}
Consider the rejection region given by \eqref{def:RNStar}. If
\begin{align}
&\lim_{N\to\infty}\bP^{N,T}_{\theta_0}(\widetilde{R}_N)=\alpha, \label{eq:RegN2}
\\
&\lim_{N\to\infty}\frac{\widetilde{c}_{\alpha}(N)}{1-\bP^{N,T}_{\theta_1}(\widetilde{R}_N)}<\infty, \label{eq:RegN3}
\end{align}
then $(\widetilde{R}_N)$ is asymptotically the most powerful in $\widetilde{\mathcal{K}}_{\alpha}$.
\end{theorem}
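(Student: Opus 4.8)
The plan is to transcribe the proof of Theorem~\ref{th:regRegAT} almost verbatim, replacing the continuous index $T$ by the discrete index $N$; the only real ingredient is the pointwise Neyman--Pearson comparison applied at each fixed $N$, which is available because $\widetilde R_N$ in \eqref{def:RNStar} is a likelihood-ratio region with strictly positive threshold $\widetilde c_\alpha(N)$.

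First I would fix $N\in\bN$ and an arbitrary $(R_N)\in\widetilde{\mathcal{K}}_\alpha$. Exactly as in the proof of Theorem~\ref{th:NPLemma}, on $\widetilde R_N$ both $\1_{\widetilde R_N}-\1_{R_N}$ and $L(\theta_0,\theta_1,U_T^N)-\widetilde c_\alpha(N)$ are nonnegative, while off $\widetilde R_N$ both are nonpositive, so their product is everywhere nonnegative; taking $\bE_{\theta_0}$ and using \eqref{eq:RadonNikodymUn} gives
\[
\bP^{N,T}_{\theta_1}(\widetilde R_N)-\bP^{N,T}_{\theta_1}(R_N)\ \ge\ \widetilde c_\alpha(N)\Big(\bP^{N,T}_{\theta_0}(\widetilde R_N)-\bP^{N,T}_{\theta_0}(R_N)\Big).
\]
Since $1-\bP^{N,T}_{\theta_1}(\widetilde R_N)>0$ (otherwise \eqref{eq:RegN3} would fail), dividing yields
\[
\frac{1-\bP^{N,T}_{\theta_1}(R_N)}{1-\bP^{N,T}_{\theta_1}(\widetilde R_N)}\ \ge\ 1+\frac{\widetilde c_\alpha(N)}{1-\bP^{N,T}_{\theta_1}(\widetilde R_N)}\Big(\bP^{N,T}_{\theta_0}(\widetilde R_N)-\bP^{N,T}_{\theta_0}(R_N)\Big).
\]

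Then I would let $N\to\infty$. Set $C:=\lim_{N\to\infty}\widetilde c_\alpha(N)\big/\big(1-\bP^{N,T}_{\theta_1}(\widetilde R_N)\big)$, which exists and is finite by \eqref{eq:RegN3} and is nonnegative because each factor is positive. By \eqref{eq:RegN2} the first product tends to $C\alpha$, while $(R_N)\in\widetilde{\mathcal{K}}_\alpha$ forces $\limsup_{N\to\infty}\bP^{N,T}_{\theta_0}(R_N)\le\alpha$, hence (using $C\ge 0$) $\limsup_{N\to\infty}\tfrac{\widetilde c_\alpha(N)}{1-\bP^{N,T}_{\theta_1}(\widetilde R_N)}\bP^{N,T}_{\theta_0}(R_N)\le C\alpha$. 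Taking $\liminf$ of the last displayed inequality and combining these two facts gives $\liminf_{N\to\infty}\big(1-\bP^{N,T}_{\theta_1}(R_N)\big)\big/\big(1-\bP^{N,T}_{\theta_1}(\widetilde R_N)\big)\ge 1+C\alpha-C\alpha=1$, which is precisely \eqref{eq:asym-def-N} with $\widetilde R_N^{*}=\widetilde R_N$ and $\widetilde{\mathcal{K}}=\widetilde{\mathcal{K}}_\alpha$; thus $(\widetilde R_N)$ is asymptotically the most powerful in $\widetilde{\mathcal{K}}_\alpha$.

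I do not expect a genuine obstacle: the computation is a line-by-line analogue of the large-time case, and hypotheses \eqref{eq:RegN2}--\eqref{eq:RegN3} are exactly the bookkeeping that makes the error term vanish in the limit. The only point worth a word of care is that $\widetilde R_N$ must be of likelihood-ratio form (built into \eqref{def:RNStar}) so that the pointwise Neyman--Pearson inequality is valid at each $N$. Separately, checking that the concrete threshold $\widehat c_\alpha(N)$ from \eqref{eq:cAlphahat} actually satisfies \eqref{eq:RegN2}--\eqref{eq:RegN3} is not part of this statement; that is handled through the sharp large-deviation Lemmas~\ref{lemma:CharExpan-VN}, \ref{lemma:BN-AsymEst} and \ref{lemma:ProbExpan-IN}, just as in the $T$-regime, and — as in the large-time case — one in fact expects the analogue of \eqref{eq:RegN3} to \emph{fail} for the natural choice $\widehat c_\alpha(N)$, which is the reason the refined class $\widehat{\mathcal{K}}_\alpha(\delta)$ is introduced.
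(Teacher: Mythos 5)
Your proof is correct and is exactly the paper's intended argument: the paper itself says only ``The proof is parallel to that of Theorem~\ref{th:regRegAT}, and we omit it here,'' and your line-by-line transcription of that large-time proof (pointwise Neyman--Pearson inequality at each $N$, division by $1-\bP^{N,T}_{\theta_1}(\widetilde R_N)$, then taking $\liminf$ using \eqref{eq:RegN2}--\eqref{eq:RegN3} and the class membership $\limsup_N \bP^{N,T}_{\theta_0}(R_N)\le\alpha$) is precisely that parallel. The side remarks are also accurate, including the observation that the concrete threshold $\widehat c_\alpha(N)$ violates \eqref{eq:RegN3}, which is why the refined class $\widehat{\mathcal K}_\alpha(\delta)$ is needed.
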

\noindent The proof is parallel to that of Theorem \ref{th:regRegAT}, and we omit it here.

Taking $\delta=0$ in Lemma~\ref{lemma:BN-AsymEst} and the proof of Theorem~\ref{th:MainResult2}, we can show that
\begin{align*}
\widehat{c}_{\alpha}(N)\big/\left(1-\bP^{N,T}_{\theta_1}(\widehat{R}_N)\right)\sim\sqrt{M}\sim N^{\beta/d+1/2},
\end{align*}
which certainly violates condition \eqref{eq:RegN3}, and allures that $(\widehat{R}_N)$ may not be asymptotically the most powerful test.
The following result thoroughly shows this.

\begin{theorem}\label{th:MorePowerTests-N}
The rejection region $(\bar{R}_N)$ given by \eqref{def:RNStar} with
\begin{align}\label{28}
\bar{c}_{\alpha}(N)=\exp\left(-\frac{(\theta_1-\theta_0)^2 TM}{4\theta_0}+\frac{(\theta_1-\theta_0)^2N}{8\theta_0^2 }-\frac{\sqrt{TM}(\theta_1^2-\theta_0^2)}{\sqrt{8\theta_0^3}}q_{\alpha}+\bar{\beta}(N)\right),
\end{align}
and $\bar{\beta}(N)$ satisfying
\begin{align}\label{68}
\bar{\beta}(N)=o(N^{\beta/d+1/2}),\qquad\limsup_{N\to\infty}\bar{\beta}(N)<0,
\end{align}
is in $\widetilde{\mathcal{K}}_{\alpha}$ and it is asymptotically more powerful than $(\widehat{R}_N)$, that is,
\begin{align*}
\limsup_{N\to\infty}\frac{1-\bP^{N,T}_{\theta_1}(\bar{R}_N)}{1-\bP^{N,T}_{\theta_1}(\widehat{R}_N)}<1.
\end{align*}
\end{theorem}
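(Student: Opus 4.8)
The argument parallels, step by step, that of its large--time analogue Theorem~\ref{th:MorePowerTests}, with the role of the large parameter $T$ now played by $M=\sum_{k=1}^N\lambda_k^{2\beta}\sim N^{2\beta/d+1}$, and with the $N$--regime machinery (Lemmas~\ref{lemma:CharExpan-VN}--\ref{lemma:ProbExpan-IN}) used in place of the $T$--regime one. We only indicate the plan.

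First I would check that $(\bar R_N)\in\widetilde{\mathcal{K}}_{\alpha}$. Writing $\ln\bar c_{\alpha}(N)=\ln\widehat c_{\alpha}(N)+\bar\beta(N)$ and feeding this into the decomposition \eqref{eq:ProbSplitN1}--\eqref{eq:ProbSplitN2}, the threshold against which $-Y^N$ is compared equals $-q_{\alpha}$ (by the defining choice \eqref{eq:cAlphahat} of $\widehat c_{\alpha}$) plus the extra term $\sqrt{8\theta_0^3}\,\bar\beta(N)\big/\big(\sqrt{TM}\,(\theta_1^2-\theta_0^2)\big)$, which is $o(1)$ since $\bar\beta(N)=o(N^{\beta/d+1/2})=o(\sqrt M)$ by \eqref{68}. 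Since $Y^N\overset{d}{\to}\mathcal{N}(0,1)$ by Lemma~\ref{lemma:YN}(i) and $\bP^{N,T}_{\theta_0}(X^N\ge N^{-\beta/(2d)})\to 0$ by Lemma~\ref{lemma:YN}(ii), both \eqref{eq:ProbSplitN1} and \eqref{eq:ProbSplitN2} give $\bP^{N,T}_{\theta_0}(\bar R_N)\to\alpha$, so in particular $\limsup_{N\to\infty}\bP^{N,T}_{\theta_0}(\bar R_N)\le\alpha$.

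Next I would obtain the sharp rate of $1-\bP^{N,T}_{\theta_1}(\bar R_N)$. Take $\eta=-\tfrac{(\theta_1-\theta_0)^2}{4\theta_0}T$ (so that $\widetilde{\epsilon}_{\eta}=-1$, exactly as in Lemma~\ref{lemma:BN-AsymEst}) and $\eta^*=\eta^*_0+\bar\beta(N)/M$, where $\eta^*_0$ is the $\delta=0$ value of $\eta^*$ in Lemma~\ref{lemma:BN-AsymEst}, so that $\ln\bar c_{\alpha}(N)=M\eta^*$ and hence $1-\bP^{N,T}_{\theta_1}(\bar R_N)=\widetilde{A}_N\widetilde{B}_N$ with this $\eta,\eta^*$. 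Repeating the Fourier--inversion computation of Lemma~\ref{lemma:BN-AsymEst} verbatim — legitimate because $\eta^*\to\eta<\tfrac{(\theta_1-\theta_0)^2T}{4\theta_1}$, so $\widetilde{\epsilon}_{\eta}\to-1$ stays in the admissible range and the exponential control of $\widetilde{\Psi}_N$ behind Lemma~\ref{lemma:CharExpan-VN} still yields dominated convergence — and using $\widetilde{\epsilon}_{\eta}=-1$ to pull the factor $e^{-\widetilde{\epsilon}_{\eta}\bar\beta(N)}=e^{\bar\beta(N)}$ out of $\exp[(\eta^*-\eta)M]=\exp[(\eta^*_0-\eta)M]\,e^{\bar\beta(N)}$, I get
\begin{align*}
\widetilde{B}_N\ \sim\ \exp\!\left[(\eta^*_0-\eta)M+\bar\beta(N)\right]\big/\sqrt M,\qquad N\to\infty.
\end{align*}
Combining with \eqref{64} exactly as in the proof of Theorem~\ref{th:MainResult2} (the cancellation between $M\eta^*$ and $\widetilde{A}_N^{-1}$ is unchanged), this yields $\bar c_{\alpha}(N)\big/\big(1-\bP^{N,T}_{\theta_1}(\bar R_N)\big)=\bar c_{\alpha}(N)\widetilde{A}_N^{-1}\widetilde{B}_N^{-1}\sim\sqrt M\sim N^{\beta/d+1/2}$, and the same computation with $\bar\beta\equiv 0$ gives
\begin{align*}
\lim_{N\to\infty}\frac{\widehat c_{\alpha}(N)\big/\big(1-\bP^{N,T}_{\theta_1}(\widehat R_N)\big)}{\bar c_{\alpha}(N)\big/\big(1-\bP^{N,T}_{\theta_1}(\bar R_N)\big)}=1 .
\end{align*}
From this and $\bar c_{\alpha}(N)/\widehat c_{\alpha}(N)=e^{\bar\beta(N)}$,
\begin{align*}
\limsup_{N\to\infty}\frac{1-\bP^{N,T}_{\theta_1}(\bar R_N)}{1-\bP^{N,T}_{\theta_1}(\widehat R_N)}
=\limsup_{N\to\infty}\frac{\bar c_{\alpha}(N)}{\widehat c_{\alpha}(N)}\cdot
\frac{\widehat c_{\alpha}(N)\big/\big(1-\bP^{N,T}_{\theta_1}(\widehat R_N)\big)}{\bar c_{\alpha}(N)\big/\big(1-\bP^{N,T}_{\theta_1}(\bar R_N)\big)}
=\limsup_{N\to\infty}e^{\bar\beta(N)}<1,
\end{align*}
by \eqref{68}. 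This is the assertion.

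I expect the only genuine obstacle — the same one hidden inside Lemma~\ref{lemma:BN-AsymEst} and deferred to the Appendix — to be making the Fourier--inversion step rigorous for a threshold $\eta^*$ that now both depends on $N$ and carries the perturbation $\bar\beta(N)/M$: one must verify that the exponential bound on $\widetilde{\Psi}_N$ (the analogue of \eqref{eq:CharExControl}) is uniform along the perturbed sequence, so dominated convergence still applies, and that $\widetilde{\epsilon}_{\eta}$ remains bounded away from the boundary of the interval on which $\widetilde{\mathcal{L}}$ is finite. Both hold automatically since $\bar\beta(N)=o(\sqrt M)$ forces $\eta^*\to\eta$, an interior point; beyond this the proof is a routine transcription of the $T\to\infty$ argument.
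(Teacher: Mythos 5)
Your proposal is correct and follows essentially the same route as the paper's proof: choose $\eta=-\tfrac{(\theta_1-\theta_0)^2}{4\theta_0}T$ and $\eta^*=\eta^*_0+\bar\beta(N)/M$, rerun the Fourier--inversion argument of Lemma~\ref{lemma:BN-AsymEst} to get $\widetilde B_N\sim e^{(\eta^*_0-\eta)M+\bar\beta(N)}/\sqrt M$, deduce $\bar c_\alpha(N)/\bigl(1-\bP^{N,T}_{\theta_1}(\bar R_N)\bigr)\sim\sqrt M$, and conclude via $\limsup e^{\bar\beta(N)}<1$. The only cosmetic difference is in establishing $(\bar R_N)\in\widetilde{\mathcal K}_\alpha$: you re-use the splitting \eqref{eq:ProbSplitN1}--\eqref{eq:ProbSplitN2} together with Lemma~\ref{lemma:YN}, whereas the paper invokes the proof of Lemma~\ref{lemma:ProbExpan-IN} with $\delta=\bar\beta(N)$; both give the same conclusion under $\bar\beta(N)=o(\sqrt M)$.
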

\begin{proof}
Following similar lines of the proof as in Lemma~\ref{lemma:ProbExpan-IN} with $x=q_\alpha$, $\delta=\bar{\beta}(N)$, and using the property that $\bar{\beta}(N)=o(N^{\beta/d+1/2})$, one can show that $(\bar{R}_N)\in\widetilde{\cK}_\alpha$. Also, taking
\begin{align*}
\eta^*=-\frac{(\theta_1-\theta_0)^2 T}{4\theta_0}+\frac{(\theta_1-\theta_0)^2N}{8\theta_0^2M }-\frac{\sqrt{T}(\theta_1^2-\theta_0^2)}{\sqrt{8\theta_0^3M}}q_{\alpha}+\frac{\bar{\beta}(N)}{M},
\end{align*}
we can prove (using the same method as in Lemma~\ref{lemma:BN-AsymEst}) that
\begin{align*}
\widetilde{B}_N\sim \exp\left(\frac{(\theta_1-\theta_0)^2}{8\theta_0^2}N-\frac{\sqrt{T}(\theta_1^2-\theta_0^2) q_{\alpha}}{\sqrt{8\theta_0^3}}\sqrt{M}+\bar{\beta}(N)\right)/\sqrt{M},\quad\textrm{ as }N\to\infty,
\end{align*}
and that
\begin{align*}
\bar{c}_{\alpha}(N)\big/\left(1-\bP^{N,T}_{\theta_1}(\bar{R}_{N})\right)\sim \sqrt{M}.
\end{align*}
In fact, from the expression of $\widetilde{B}_N$ in Lemma~\ref{lemma:BN-AsymEst} we know that
\begin{align*}
\lim_{N\to\infty}\frac{\widehat{c}_{\alpha}(N)\big/\left(1-\bP^{N,T}_{\theta_1}(\widehat{R}_{N})\right)} {\bar{c}_{\alpha}(N)\big/\left(1-\bP^{N,T}_{\theta_1}(\bar{R}_N)\right)}=1.
\end{align*}
Therefore,
\begin{align*}
\limsup_{N\to\infty}\frac{1-\bP^{N,T}_{\theta_1}(\bar{R}_N)}{1-\bP^{N,T}_{\theta_1}(\widehat{R}_N)} =\limsup_{N\to\infty}\frac{\bar{c}_{\alpha}(N)}{\widehat{c}_{\alpha}(N)}=\limsup_{N\to\infty}e^{\bar{\beta}(N)}<1,
\end{align*}
which concludes the proof.
\end{proof}

Finally, by Theorem~\ref{th:MorePowerTests-N}, similar to the proof of Theorem~\ref{Th:NoMostAsymKstar}, one can prove Theorem~\ref{Th:NoMostAsymKtilde}; the rejection region $(\widetilde{R}_N)$ given by \eqref{def:RNStar} cannot be asymptotically the most powerful in $\widetilde{\cK}_{\alpha}$.

\section*{Acknowledgments}
We would like to thank the anonymous referees and the associate editor for their helpful comments and suggestions which improved greatly the final manuscript.
Igor Cialenco acknowledges support from the NSF grant DMS-1211256.

{\small

\begin{appendix}

\section{Appendix}\label{sec:appendix}

In this section, for sake of convenience,  we present some known results that we use throughout the paper.
We conclude the section with the detailed proofs of some technical lemmas from the main body.

\subsection{Limit Theorems}
\begin{definition}[Ergodicity]
The stochastic process $X$ has ergodic property if there exists an (invariant) distribution $F$, such that for any measurable function $h$ with $\bE |h(\xi)|<\infty$, where $\xi$ has distribution $F$, we have the convergence
$$
\lim_{T\to\infty}\frac{1}{T}\int_0^Th(X_t)dt=\bE h(\xi)\quad a.s.
$$
If $\xi$ admits a density function $f$, we call $f$ the invariant density of $X$.
\end{definition}

Assume that $X$ satisfies the following stochastic differential equation
\begin{align}\label{eq:SDE-GeneralErogodic}
dX_t=S(X_t)dt+\sigma(X_t)dW_t,\quad X_0=x_0, \ t\ge0.
\end{align}
We will say that $X$ satisfies Conditions ($\mathcal{RP}$) if
\begin{align}
V(x)&:=\int_0^x\exp\left\{-2\int_0^y\frac{S(u)}{\sigma^2(u)}du\right\}dy\rightarrow\pm\infty,\quad\textrm{as}\quad x\rightarrow\pm\infty,\nonumber
\\
G&:=\int_{-\infty}^{\infty}\sigma^{-2}(y)\exp\left\{2\int_0^y\frac{S(u)}{\sigma^2(u)}du\right\}dy<\infty. \tag{$\mathcal{RP}$}\label{eq:CondRP}
\end{align}
It is well-known that conditions \eqref{eq:CondRP} guarantee that the process $X$ is ergodic. More precisely the following version of Law of Large Numbers holds true.

\begin{theorem}[Law of Large Numbers]\label{th:LLN}
Assume that the conditions $(\mathcal{RP)}$ are fulfilled. Then, the stochastic process $X$ with dynamics \eqref{eq:SDE-GeneralErogodic} has the ergodic property, with the invariant density
\begin{align}
f(x)=\frac{1}{G\sigma^2(x)}\exp\left\{2\int_0^x\frac{S(y)}{\sigma^2(y)}dy\right\}.
\end{align}
\end{theorem}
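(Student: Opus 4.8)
This is the classical ergodic theorem for a one–dimensional diffusion, and the plan is to reduce it to the scale function and the speed measure. First I would recast the hypotheses $(\mathcal{RP})$ in these terms: with $\Psi(x):=\int_0^x S(u)\sigma^{-2}(u)\,du$ the scale density is $s'(x)=e^{-2\Psi(x)}$, so that the scale function equals exactly $V$, and the speed density is $m(x):=\sigma^{-2}(x)e^{2\Psi(x)}$ with total mass $G=\int_{\bR}m(x)\,dx$. The condition $V(\pm\infty)=\pm\infty$ is recurrence: on natural scale, $Y_t:=V(X_t)$ is a continuous local martingale which is a time change of Brownian motion on the whole line, hence recurrent, so (transforming back) $X$ visits every point of $\bR$ infinitely often a.s., from any starting point. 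The condition $G<\infty$ is positive recurrence, i.e. the speed measure is finite and return times to a fixed level have finite mean.

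Second, I would identify $f=m/G$ as the invariant density by checking the stationary Fokker--Planck equation directly. With $g(x):=\sigma^2(x)f(x)=G^{-1}e^{2\Psi(x)}$ one has $g'(x)=2\Psi'(x)g(x)=2S(x)f(x)$, so the stationary probability flux $S(x)f(x)-\tfrac12(\sigma^2f)'(x)$ vanishes identically; hence $\int_{\bR}(\mathcal{L}h)\,f\,dx=0$ for every $h\in C_c^2(\bR)$, where $\mathcal{L}h=Sh'+\tfrac12\sigma^2 h''$ is the generator of \eqref{eq:SDE-GeneralErogodic}, which is exactly the statement that $f\,dx$ is invariant for the transition semigroup; since $G<\infty$, $f$ is a probability density.

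Third, I would obtain the a.s.\ convergence via a regeneration argument. Fix the point $0$ and the auxiliary level $1$, set $\tau_0:=\inf\{t\ge0:X_t=0\}$ and, inductively, $\rho_j:=\inf\{t>\tau_{j-1}:X_t=1\}$, $\tau_j:=\inf\{t>\rho_j:X_t=0\}$; by recurrence all these are a.s.\ finite, and by positive recurrence $\bE_{x_0}[\tau_1-\tau_0]<\infty$. The strong Markov property makes the cycles $\big(X_{\tau_{j-1}+t}\big)_{0\le t\le\tau_j-\tau_{j-1}}$ i.i.d.\ with strictly positive length, so the renewal--reward theorem gives
\begin{align*}
\frac{1}{T}\int_0^T h(X_s)\,ds\;\longrightarrow\;\frac{\bE_{x_0}\!\big[\int_{\tau_0}^{\tau_1}h(X_s)\,ds\big]}{\bE_{x_0}[\tau_1-\tau_0]}\qquad\text{a.s.,}\ \ T\to\infty,
\end{align*}
for every $h$ with $\bE|h(\xi)|<\infty$. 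It remains to evaluate the right–hand side: the expected occupation time of a Borel set $A$ during one cycle equals $c\int_A m(x)\,dx$ for a constant $c$ independent of $A$, and $\bE_{x_0}[\tau_1-\tau_0]=c\,G$ with the same $c$; therefore the limit is $G^{-1}\int_{\bR}h(x)m(x)\,dx=\int_{\bR}h(x)f(x)\,dx=\bE\,h(\xi)$, which is the assertion.

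\textbf{Main obstacle.} The one genuinely non-formal point is the last computation, namely that the expected cycle occupation time of $A$ is a fixed multiple of $\int_A m(x)\,dx$ — this is the only place where the speed measure, rather than mere recurrence, is used. I would establish it from the Green's function of the diffusion killed on hitting $0$ (equivalently, by solving the Poisson equation $\mathcal{L}u=-\1_A$ with the appropriate behaviour at $0$ and applying Dynkin's formula), which produces a kernel that is a fixed multiple of $m$. An alternative that avoids this bookkeeping is to prove the theorem first for the stationary version (started from $f$) by the continuous-time Birkhoff theorem, using irreducibility to get triviality of the invariant $\sigma$-field, and then transfer the almost-sure statement from $\bP_f$ to $\bP_{x_0}$ by coupling the two copies of $X$ at their first common visit to $0$; the difficulty then reappears as the justification of this coupling, but the probabilistic content is unchanged.
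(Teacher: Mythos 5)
The paper does not actually prove Theorem~\ref{th:LLN}: it states the result in the appendix as a known fact and cites Kutoyants, Gikhman--Skorokhod, and Durrett for the proof. So there is no in-paper argument to compare against; I can only assess your sketch on its own merits.

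Your route is the standard one for one--dimensional diffusions and is correct in substance. The identification of $V$ as the scale function and $m=\sigma^{-2}e^{2\Psi}$ as the speed density is right; the Fokker--Planck flux computation $S f-\tfrac12(\sigma^2 f)'\equiv 0$ is verified correctly, and integrating $\int(\mathcal{L}h)f\,dx$ by parts to reduce to that flux does establish invariance for $h\in C_c^2$. The regeneration argument via return times to $0$ and the renewal--reward theorem is the right mechanism for the almost-sure convergence, and you correctly isolate the single genuinely non-formal step, namely that the expected cycle occupation measure is a constant multiple of $m(x)\,dx$, which is the occupation-time/Green's-function identity. One small remark on ordering: you invoke ``positive recurrence'' to assert $\bE_{x_0}[\tau_1-\tau_0]<\infty$ and then later derive $\bE_{x_0}[\tau_1-\tau_0]=cG$ from the Green's function; the first assertion is really a special case of the second (take $h\equiv 1$), so it would be cleaner to prove the occupation-time identity first and read off finiteness of the mean cycle length from $G<\infty$, rather than presenting positive recurrence as a separate input. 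This is a matter of exposition, not a gap; the argument as a whole is sound and consistent with the references the paper cites.
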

\noindent For the proof, see for instance \citet{KutoyantsBook2004} or \citet{GikhmanSkorokhodBook1996}, \citet{DurrettBook1996}.

We will also make use of the following version of the Cental Limit Theorem.
\begin{theorem}[Central Limit Theorem]\label{th:CLT}
 Let $h(t,\omega)$ be progressively measurable random process, and square integrable with respect to $t$ on any closed interval. Suppose that there exist a nonrandom function $\varphi_T$ and a positive constant $\rho$ such that
\begin{align}
\lim_{T\to\infty}\varphi_T^2\int_0^Th^2(t,\omega)dt=\rho^2<\infty\qquad\textrm{in probability}.
\end{align}
Then,
\begin{align}
\varphi_T\int_0^Th(t,\omega)dW_t\overset{d}\longrightarrow\mathcal{N}(0,\rho^2).
\end{align}
\end{theorem}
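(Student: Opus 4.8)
The plan is to reduce the statement to the scaling invariance of Brownian motion by means of a random time change. First I would set
\[
M_t:=\int_0^t h(s,\omega)\,dW_s,\qquad t\ge0,
\]
which under the stated hypotheses is a continuous local martingale with $M_0=0$ and quadratic variation $\langle M\rangle_t=\int_0^t h^2(s,\omega)\,ds$, finite a.s.\ for every $t$. By the Dambis--Dubins--Schwarz theorem there is, on a possibly enlarged probability space, a standard Brownian motion $B$ with $M_t=B_{\langle M\rangle_t}$ for all $t\ge0$ (the enlargement is harmless since we only seek convergence in distribution). Consequently $\varphi_T\int_0^T h\,dW=\varphi_T B_{\langle M\rangle_T}$.

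Next I would exploit Brownian scaling: for each fixed $T>0$ the process $B^T_s:=\varphi_T B_{s/\varphi_T^2}$, $s\ge0$, is again a standard Brownian motion, so that
\[
\varphi_T\int_0^T h\,dW=B^T_{\zeta_T},\qquad \zeta_T:=\varphi_T^2\langle M\rangle_T=\varphi_T^2\int_0^T h^2(s,\omega)\,ds,
\]
and by hypothesis $\zeta_T\to\rho^2$ in probability, with $\rho^2>0$. It then suffices to show $B^T_{\zeta_T}\overset{d}\longrightarrow\cN(0,\rho^2)$.

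The heart of the argument is to prove that $B^T_{\zeta_T}-B^T_{\rho^2}\to0$ in probability. Fixing $\epsilon>0$ and $0<\delta<\rho^2$, I would estimate
\[
\bP\!\left(\abs{B^T_{\zeta_T}-B^T_{\rho^2}}>\epsilon\right)\le\bP\!\left(\abs{\zeta_T-\rho^2}>\delta\right)+\bP\!\left(\sup_{\abs{s-\rho^2}\le\delta}\abs{B^T_s-B^T_{\rho^2}}>\epsilon\right).
\]
The second term does not depend on $T$, because $B^T$ is a standard Brownian motion for every $T$, and it tends to $0$ as $\delta\downarrow0$ by a.s.\ continuity of Brownian paths at $\rho^2$. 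Hence, given $\eta>0$, one first chooses $\delta$ making the second term $<\eta/2$ uniformly in $T$, then $T_0$ making the first term $<\eta/2$ for $T\ge T_0$. Since $B^T_{\rho^2}\sim\cN(0,\rho^2)$ for every $T$, Slutsky's theorem then gives $\varphi_T\int_0^T h\,dW=B^T_{\zeta_T}\overset{d}\longrightarrow\cN(0,\rho^2)$, which is the assertion.

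I expect the one delicate point --- the main obstacle --- to be precisely that the reference Brownian motion $B^T$ in the time-change representation depends on $T$, so one cannot simply invoke almost sure path continuity of a single process as $T\to\infty$; the step above gets around this by using that all the $B^T$ have the same law, which is what makes the modulus-of-continuity bound uniform in $T$. (An alternative route would go through characteristic functions and the complex exponential martingale $\exp\!\big(iu\varphi_T\int_0^t h\,dW+\tfrac12u^2\varphi_T^2\int_0^t h^2\,ds\big)$ after a localization, but the time-change argument is shorter and sidesteps integrability issues.)
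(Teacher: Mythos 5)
The paper does not give its own proof of this statement; it simply states ``The proof can be found in \citet{KutoyantsBook2004}.'' So there is nothing in the paper to compare against line by line, and I evaluate your argument on its own terms.

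Your time-change proof is correct. The Dambis--Dubins--Schwarz representation $M_t = B_{\langle M\rangle_t}$ and the Brownian scaling $B^T_s := \varphi_T B_{s/\varphi_T^2}$ give exactly $\varphi_T\int_0^T h\,dW = B^T_{\zeta_T}$ with $\zeta_T = \varphi_T^2\int_0^T h^2\,ds \to \rho^2$ in probability. The key bound
\[
\bP\bigl(|B^T_{\zeta_T}-B^T_{\rho^2}|>\epsilon\bigr)\le\bP\bigl(|\zeta_T-\rho^2|>\delta\bigr)+\bP\Bigl(\sup_{|s-\rho^2|\le\delta}|B^T_s-B^T_{\rho^2}|>\epsilon\Bigr)
\]
is valid, and you correctly identify the crux: the second term is independent of $T$ because all the $B^T$ share the same law, so it is controlled uniformly in $T$ by sending $\delta\downarrow 0$ first. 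Since $B^T_{\rho^2}\sim\cN(0,\rho^2)$ exactly for every $T$, Slutsky closes the argument. Two small remarks: (i) you are right that DDS requires an enlargement of the probability space when $\langle M\rangle_\infty<\infty$ with positive probability, though in the paper's actual use ($\varphi_T = T^{-1/2}$ with the integrated square growing linearly a.s.) this caveat is moot; (ii) the alternative route you mention, via the complex exponential martingale $\exp\bigl(iu\varphi_T\int_0^t h\,dW+\tfrac12 u^2\varphi_T^2\int_0^t h^2\,ds\bigr)$, is the one that appears in many of the standard references (including, if memory serves, Kutoyants), but it requires a localization/truncation to deal with the a priori nonintegrable exponential factor, which your time-change argument sidesteps entirely. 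Either route establishes the theorem; yours is clean and complete.
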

\noindent The proof can be found in \citet{KutoyantsBook2004}.

\subsection{Large Deviation Principle}\label{appendix:cum}

We start with a general result on large deviations of random processes. For a random process $S_t$ which may take only one infinite value, either $-\infty$ or $+\infty$, we define its cumulant generating function as follows
\begin{align}\label{eq:10}
m_t(\epsilon)=\bE\exp(\epsilon S_t),\qquad\epsilon\in\mathbb{R}.
\end{align}
Conventionally, if $\bP(S_t=-\infty)>0$, then  $m_t(\varepsilon)$ is well defined for $\epsilon\neq0$, and $m_t(\epsilon)=\infty$ for all $\epsilon<0$.
For $\epsilon=0$, we set $m_t(0)=\bP(S_t>-\infty)$.
If $\bP(S_t=\infty)>0$, then $m_t(\varepsilon)$ is also well defined for $\epsilon\neq0$,  and $m_t(\epsilon)=\infty$ for all $\epsilon>0$.
For $\epsilon=0$, we set $m_t(0)=\bP(S_t<\infty)$. Obviously, if $\bP(-\infty<S_t<\infty)=1$, then $m_t(0)=1$.

\begin{definition}\label{def:condM}
Suppose that $\lim_{t\to\infty}\varphi_t=\infty$. We say that $m_t(\epsilon)$ satisfies  condition $\textbf{(m)}$ if
\begin{enumerate}[(i)]
\item for all $\epsilon\in\mathbb{R}$, the limit
\begin{align}\label{eq:14}
\lim_{T\to\infty}\varphi_t^{-1}\ln m_t(\epsilon)=:c(\epsilon)
\end{align}
exists;
\item the function $c(\epsilon)$ is proper and convex with
\begin{align}
\epsilon_-:=\inf\{\epsilon: c(\epsilon)<\infty\}<\epsilon_+:=\sup\{\epsilon: c(\epsilon)<\infty\};
\end{align}
\item  $c(\epsilon)$ is differentiable on $(\epsilon_-,\epsilon_+)$, and
\begin{align}
\gamma_-:=\underset{\epsilon\downarrow\epsilon_-}{\lim}c'(\epsilon)<\gamma_+:=\underset{\epsilon\uparrow\epsilon_+}{\lim}c'(\epsilon).
\end{align}
\end{enumerate}
\end{definition}
Obviously, $\epsilon_-\le0$ and $\epsilon_+\ge0$. If $\epsilon_-<0<\epsilon_+$, then under condition $\textbf{(m)}$ the derivative $\gamma_0=c'(0)$ is well defined.

The following theorem is a Chernoff theorem for extended random variable $S_t$ under condition $\textbf{(m)}$ with $\epsilon_-<0<\epsilon_+$
(for details see for instance \citet{Linckov1999}).
\begin{theorem}\label{th:Chernoff}\textbf{(Large Deviation)}
If condition $\textbf{(m)}$ is satisfied with $\epsilon_-<0<\epsilon_+$, then the following statements hold:\\
(1) If $\gamma_0<\gamma_+$, then for all $\gamma\in(\gamma_0,\gamma_+)$
\begin{align}
\lim_{T\to\infty}\varphi_t^{-1}\ln \bP(\varphi_t^{-1}S_t>\gamma)=\lim_{T\to\infty}\varphi_t^{-1}\ln \bP(\varphi_t^{-1}S_t\ge\gamma)=-I(\gamma);
\end{align}
(2) If $\gamma_-<\gamma_0$, then for all $\gamma\in(\gamma_-,\gamma_0)$
\begin{align}
\lim_{T\to\infty}\varphi_t^{-1}\ln \bP(\varphi_t^{-1}S_t<\gamma)=\lim_{T\to\infty}\varphi_t^{-1}\ln \bP(\varphi_t^{-1}S_t\le\gamma)=-I(\gamma).
\end{align}
Here, $I(\gamma)$ denotes the Legendre-Fenchel transform of $c(\epsilon)$, that is $I(\gamma)=\sup_\epsilon\{\epsilon\gamma-c(\epsilon)\}$.
\end{theorem}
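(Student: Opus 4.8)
The plan is to prove Theorem~\ref{th:Chernoff} by the standard two-step scheme behind G\"artner--Ellis/Chernoff-type results: an exponential Markov bound for the upper estimate, and an exponential change of measure (tilting) for the lower estimate. I would first reduce to statement~(1) alone, since~(2) follows by applying~(1) to the process $-S_t$, whose scaled cumulant generating function is $\epsilon\mapsto c(-\epsilon)$ and which inherits condition~\textbf{(m)} with $(\epsilon_-,\epsilon_+,\gamma_-,\gamma_+,\gamma_0)$ replaced by $(-\epsilon_+,-\epsilon_-,-\gamma_+,-\gamma_-,-\gamma_0)$. I would also note at the outset that the assumption $\epsilon_-<0<\epsilon_+$ forces $\bP(S_t=\pm\infty)=0$ for all large $t$: if $\bP(S_t=+\infty)>0$ then $m_t(\epsilon)=+\infty$ for every $\epsilon>0$, so $c\equiv+\infty$ on a right neighbourhood of $0$, contradicting $\epsilon_+>0$, and symmetrically for $-\infty$; hence in the regime that matters $S_t$ is a.s.\ finite and $m_t(\epsilon)<\infty$ for $\epsilon\in(\epsilon_-,\epsilon_+)$.

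For the upper bound, fix $\gamma\in(\gamma_0,\gamma_+)$. Using that $c'$ is continuous and nondecreasing on $(\epsilon_-,\epsilon_+)$ with boundary limits $\gamma_-<\gamma_+$ and $c'(0)=\gamma_0<\gamma$, the intermediate value theorem produces $\epsilon^\star\in(0,\epsilon_+)$ with $c'(\epsilon^\star)=\gamma$. For every $\epsilon\in(0,\epsilon_+)$, Markov's inequality applied to $e^{\epsilon S_t}$ gives $\bP(\varphi_t^{-1}S_t\ge\gamma)\le e^{-\epsilon\gamma\varphi_t}m_t(\epsilon)$, so $\varphi_t^{-1}\ln\bP(\varphi_t^{-1}S_t\ge\gamma)\le-\epsilon\gamma+\varphi_t^{-1}\ln m_t(\epsilon)\to-(\epsilon\gamma-c(\epsilon))$ as $t\to\infty$. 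I would then optimize over $\epsilon>0$: the concave map $\epsilon\mapsto\epsilon\gamma-c(\epsilon)$ has derivative $\gamma-c'(\epsilon)$, which is $\ge0$ on $(\epsilon_-,\epsilon^\star]$ and $\le0$ on $[\epsilon^\star,\epsilon_+)$, and it equals $-\infty$ outside $[\epsilon_-,\epsilon_+]$; hence its supremum over $\bR$ is attained at $\epsilon^\star>0$, so $\sup_{\epsilon>0}\{\epsilon\gamma-c(\epsilon)\}=I(\gamma)$ and $\limsup_t\varphi_t^{-1}\ln\bP(\varphi_t^{-1}S_t\ge\gamma)\le-I(\gamma)$. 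The same bound holds a fortiori for $\bP(\varphi_t^{-1}S_t>\gamma)$.

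For the lower bound I would fix $\gamma\in(\gamma_0,\gamma_+)$ and small $h>0$ with $\gamma+h<\gamma_+$, choose $\epsilon_h\in(\epsilon^\star,\epsilon_+)$ with $c'(\epsilon_h)=\gamma+h/2$, and introduce the tilted law $d\bQ_t=e^{\epsilon_h S_t}m_t(\epsilon_h)^{-1}d\bP$ (well defined for large $t$). Restricting to $E_t:=\{\gamma<\varphi_t^{-1}S_t<\gamma+h\}$ and undoing the tilt gives $\bP(\varphi_t^{-1}S_t>\gamma)\ge\bP(E_t)=m_t(\epsilon_h)\bE_{\bQ_t}[e^{-\epsilon_h S_t}\1_{E_t}]\ge m_t(\epsilon_h)e^{-\epsilon_h(\gamma+h)\varphi_t}\bQ_t(E_t)$, so $\varphi_t^{-1}\ln\bP(\varphi_t^{-1}S_t>\gamma)\ge\varphi_t^{-1}\ln m_t(\epsilon_h)-\epsilon_h(\gamma+h)+\varphi_t^{-1}\ln\bQ_t(E_t)$. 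It then remains to check $\varphi_t^{-1}\ln\bQ_t(E_t)\to0$: under $\bQ_t$ the scaled cumulant of $S_t$ is $\epsilon\mapsto\varphi_t^{-1}\ln m_t(\epsilon+\epsilon_h)-\varphi_t^{-1}\ln m_t(\epsilon_h)\to c(\epsilon+\epsilon_h)-c(\epsilon_h)=:c_h(\epsilon)$, finite near $0$ with $c_h(0)=0$ and $c_h'(0)=c'(\epsilon_h)=\gamma+h/2\in(\gamma,\gamma+h)$; applying the Chernoff upper bound just proved, now under $\bQ_t$, to $\{\varphi_t^{-1}S_t\le\gamma\}$ (via $-S_t$) and to $\{\varphi_t^{-1}S_t\ge\gamma+h\}$ shows each has $\bQ_t$-probability decaying exponentially in $\varphi_t$, whence $\bQ_t(E_t)\to1$. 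This yields $\liminf_t\varphi_t^{-1}\ln\bP(\varphi_t^{-1}S_t>\gamma)\ge c(\epsilon_h)-\epsilon_h(\gamma+h)$, and letting $h\downarrow0$ (so $\epsilon_h\downarrow\epsilon^\star$ by continuity of $(c')^{-1}$ and of $c$) gives $\ge c(\epsilon^\star)-\epsilon^\star\gamma=-I(\gamma)$. Combining with the upper bound and with $\bP(\varphi_t^{-1}S_t>\gamma)\le\bP(\varphi_t^{-1}S_t\ge\gamma)$ gives the chain of equalities in~(1), and~(2) follows by the reflection described above.

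\textbf{The main obstacle} I expect is the lower bound, specifically the concentration statement $\varphi_t^{-1}\ln\bQ_t(E_t)\to0$: one must arrange the exponential tilt so that the tilted law's typical value $c'(\epsilon_h)$ sits strictly inside the window $(\gamma,\gamma+h)$ — hence the choice $c'(\epsilon_h)=\gamma+h/2$ rather than tilting by $\epsilon^\star$ — and then bootstrap the already-established Chernoff upper bound under the new measure; checking that the relevant scaled cumulant genuinely passes to the limit under the tilt (uniformly enough near $0$) is the place where care is needed. A secondary point is verifying that the Legendre--Fenchel supremum defining $I(\gamma)$ is attained at an interior $\epsilon^\star\in(0,\epsilon_+)$ for $\gamma\in(\gamma_0,\gamma_+)$, which is exactly where differentiability of $c$ on $(\epsilon_-,\epsilon_+)$ and the hypothesis $\gamma_0<\gamma<\gamma_+$ are used.
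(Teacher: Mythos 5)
The paper does not prove this statement at all: it is stated as a known Chernoff-type large deviation theorem and the authors defer to Lin\cprime kov (1999) for details. So there is no in-paper proof to compare against; I'll instead assess your argument on its own.

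Your proof is correct and follows the standard G\"artner--Ellis/Chernoff route: Markov's exponential bound for the upper estimate, with the supremum $\sup_{\epsilon>0}\{\epsilon\gamma-c(\epsilon)\}$ shown to equal the full Legendre--Fenchel value $I(\gamma)$ because the concave maximand has an interior stationary point $\epsilon^\star\in(0,\epsilon_+)$ guaranteed by $\gamma_0<\gamma<\gamma_+$ and continuity of $c'$; exponential tilting for the lower estimate, with the tilt parameter $\epsilon_h$ deliberately chosen so the tilted drift $c'(\epsilon_h)=\gamma+h/2$ lies strictly inside the window $(\gamma,\gamma+h)$, so the already-established upper bound under $\bQ_t$ forces $\bQ_t(E_t)\to1$ and hence $\varphi_t^{-1}\ln\bQ_t(E_t)\to0$; and reduction of part (2) to part (1) via $S_t\mapsto-S_t$. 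Two small technical remarks that would tighten the write-up: first, when you apply the upper bound under $\bQ_t$ the reference measure changes with $t$, so one should state the Markov bound as $\varphi_t^{-1}\ln\bQ_t(\varphi_t^{-1}S_t\ge\gamma+h)\le-\epsilon(\gamma+h)+\varphi_t^{-1}\ln m_t(\epsilon+\epsilon_h)-\varphi_t^{-1}\ln m_t(\epsilon_h)$ and then let $t\to\infty$, rather than invoking a ``limit scaled cgf of $\bQ_t$'' as if it were a fixed tilted process (you essentially say this, but it bears spelling out); second, the passage $\epsilon_h\downarrow\epsilon^\star$ is not literally guaranteed if $c'$ has a flat piece at level $\gamma$, but this is harmless since along such a piece $c(\epsilon)-\epsilon\gamma$ is constant, so $c(\epsilon_h)-\epsilon_h(\gamma+h)\to c(\epsilon^\star)-\epsilon^\star\gamma=-I(\gamma)$ regardless. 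With those clarifications the argument is complete and is essentially the textbook proof the authors had in mind when citing Lin\cprime kov.
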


\subsection{Sharp Large Deviation for OU process}\label{appendix:SLDforOU}

Consider the Ornstein-Uhlenbeck process
\begin{align*}
dX_t=\theta X_t+\sigma dW_t,\qquad X(0)=0,
\end{align*}
where $\theta$ is strictly negative. Define
\begin{align*}
\mathcal{Z}_T(a,b)=&a\int_0^TX_tdX_t+b\int_0^TX_t^2dt,
\\
\mathcal{L}_T(a,b)=&T^{-1}\ln\bE\left[\exp\left(\mathcal{Z}_T(a,b)\right)\right].
\end{align*}
Then, we have the following result.
\begin{proposition}\label{lemma:BercueSLD}
Set $\Delta=\{(a,b)\in\mathbb{R}^2|\theta^2-2b\sigma^2>0\textrm{ and }\theta+a\sigma^2<\sqrt{\theta^2-2b\sigma^2}\}$ and let $\rho(b)=\sqrt{\theta^2-2b\sigma^2}$. 
Then, for all $(a,b)\in\Delta$, 
\begin{align*}
\mathcal{L}_T(a,b)=\mathcal{L}(a,b)+T^{-1}\mathcal{H}(a,b)+T^{-1}\mathcal{R}_T(a,b)
\end{align*}
with
\begin{align*}
\mathcal{L}(a,b)=&-\frac{1}{2}(a\sigma^2+\theta+\rho(b)),
\\
\mathcal{H}(a,b)=&-\frac{1}{2}\ln\left(\frac{1}{2}(1-(a\sigma^2+\theta)\rho^{-1}(b))\right),
\\
\mathcal{R}_T(a,b)=&-\frac{1}{2}\ln\left(1+\frac{1+(a\sigma^2+\theta)\rho^{-1}(b)}{1-(a\sigma^2+\theta)\rho^{-1}(b)}e^{-2T\rho(b)}\right).
\end{align*}
\end{proposition}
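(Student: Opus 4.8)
The plan is to evaluate the Laplace transform $\bE\big[\exp(\mathcal{Z}_T(a,b))\big]$ in closed form by the Feynman--Kac formula together with a quadratic--exponential (Riccati) ansatz, and then to peel off the term linear in $T$ (which gives $\mathcal{L}$), the $T$-independent term (which gives $\mathcal{H}$), and the exponentially decaying tail (which gives $\mathcal{R}_T$); this is in essence the computation behind \citet{BercuRouault2001}. \textbf{Step 1 (It\^o reduction).} Since $X_0=0$, It\^o's formula gives $a\int_0^T X_t\,dX_t=\tfrac a2 X_T^2-\tfrac{a\sigma^2}{2}T$, hence
\[
\mathcal{Z}_T(a,b)=\tfrac a2 X_T^2+b\int_0^T X_t^2\,dt-\tfrac{a\sigma^2}{2}T,\qquad \mathcal{L}_T(a,b)=-\tfrac{a\sigma^2}{2}+T^{-1}\ln\bE\Big[\exp\big(\tfrac a2 X_T^2+b\!\int_0^T\! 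X_t^2\,dt\big)\Big].
\]
The role of the set $\Delta$ is precisely to make this exponential moment of a quadratic functional of the Gaussian process $X$ finite; I would establish this at the outset, either by a direct diagonalization of the quadratic form or, more slickly, by a Girsanov shift $d\widetilde W=dW-a\sigma X\,dt$ which turns $X$ into an OU process with rate $\theta+a\sigma^2$ and reduces the whole problem to the classical Cameron--Martin formula $\widetilde\bE[\exp(\widetilde b\int_0^T X_t^2\,dt)]$, whose domain of validity is standard.

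\textbf{Step 2 (Feynman--Kac and the ansatz).} Put $v(t,x):=\bE[\exp(\tfrac a2 X_T^2+b\int_t^T X_s^2\,ds)\mid X_t=x]$, which solves the backward linear parabolic equation $v_t+\tfrac{\sigma^2}{2}v_{xx}+\theta x\,v_x+b x^2 v=0$ with terminal data $v(T,x)=e^{ax^2/2}$, and the expectation above equals $v(0,0)$. Seeking $v(t,x)=\exp(\phi(t)x^2+\psi(t))$ and matching powers of $x$ reduces the PDE to the constant-coefficient system
\[
\phi'+2\sigma^2\phi^2+2\theta\phi+b=0,\quad\phi(T)=\tfrac a2;\qquad \psi'=-\sigma^2\phi,\quad\psi(T)=0,
\]
so that $\mathcal{L}_T(a,b)=-\tfrac{a\sigma^2}{2}+T^{-1}\psi(0)$ with $\psi(0)=\sigma^2\int_0^T\phi(t)\,dt$. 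Factoring $2\sigma^2 X^2+2\theta X+b=2\sigma^2(X-\phi_+)(X-\phi_-)$, $\phi_\pm=\tfrac{-\theta\pm\rho(b)}{2\sigma^2}$ with $\rho(b)=\sqrt{\theta^2-2b\sigma^2}$, passing to the time-to-maturity variable $\tau=T-t$ and separating variables yields, with $R_0:=\tfrac{a\sigma^2+\theta-\rho(b)}{a\sigma^2+\theta+\rho(b)}$,
\[
\sigma^2\phi(\tau)=-\tfrac\theta2+\tfrac{\rho(b)}{2}\cdot\frac{1+R_0 e^{2\rho(b)\tau}}{1-R_0 e^{2\rho(b)\tau}};
\]
on $\Delta$ one has $a\sigma^2+\theta-\rho(b)<0$, and $\phi(\tau)\to\phi_-$ as $\tau\to\infty$, i.e.\ this is the branch consistent with finiteness of the transform.

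\textbf{Step 3 (integrate and split).} Integrating $\sigma^2\phi$ over $\tau\in[0,T]$ via the substitution $u=e^{2\rho(b)\tau}$ and partial fractions gives $\psi(0)$ in closed form; extracting the dominant contribution produces $\mathcal{L}_T(a,b)=-\tfrac12(a\sigma^2+\theta+\rho(b))+T^{-1}\mathcal{H}(a,b)+T^{-1}\mathcal{R}_T(a,b)$. Using $1-R_0=\tfrac{2\rho(b)}{a\sigma^2+\theta+\rho(b)}$ together with the identity $\tfrac{1+(a\sigma^2+\theta)\rho(b)^{-1}}{1-(a\sigma^2+\theta)\rho(b)^{-1}}=-R_0^{-1}$, one checks that the $T$-independent constant is exactly $\mathcal{H}(a,b)=-\tfrac12\ln\!\big(\tfrac12(1-(a\sigma^2+\theta)\rho(b)^{-1})\big)$ and the remainder is exactly $\mathcal{R}_T(a,b)=-\tfrac12\ln\!\big(1+\tfrac{1+(a\sigma^2+\theta)\rho(b)^{-1}}{1-(a\sigma^2+\theta)\rho(b)^{-1}}e^{-2T\rho(b)}\big)$, which is the claim; here the sign conditions in $\Delta$ are what guarantee that the arguments of the logarithms and of $\rho(b)$ are well defined and of the right sign.

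\textbf{Main obstacle.} The algebraic skeleton (Steps 2--3) is routine Riccati bookkeeping; the two places where care is genuinely needed are (i) Step 1 --- rigorously pinning down that the quadratic exponential moment is finite exactly on $\Delta$, so that $v$ is finite and regular enough for the Feynman--Kac representation and the ansatz to be legitimate (the Girsanov reduction to Cameron--Martin is the cleanest way to control the domain), and (ii) tracking the branches of $\ln(\cdot)$ and $\sqrt{\cdot}$ and several sign-sensitive denominators so that the final expressions coincide verbatim with the stated $\mathcal{L},\mathcal{H},\mathcal{R}_T$ rather than with equivalent-looking rearrangements.
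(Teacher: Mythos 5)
Your derivation is correct, and it is essentially the approach the paper itself indicates: the paper states this proposition by citing Bercu--Rouault (2001) and then remarks that it "can be also proved by evaluating the cumulant generating function of $\mathcal{Z}_T(a,b)$ using Feynman--Kac formula, similar to the method we used to get \eqref{eq:34} (taking $N=1$)" --- which is precisely the Riccati/Feynman--Kac computation you carry out (It\^o reduction to a quadratic functional, quadratic-exponential ansatz, solution of the Riccati ODE, integration and splitting of $\psi(0)$ into linear, constant, and exponentially decaying parts). The algebraic identities you invoke, $1-R_0=\tfrac{2\rho(b)}{a\sigma^2+\theta+\rho(b)}$ and $\tfrac{1+(a\sigma^2+\theta)\rho(b)^{-1}}{1-(a\sigma^2+\theta)\rho(b)^{-1}}=-R_0^{-1}$, do yield the stated $\mathcal{L},\mathcal{H},\mathcal{R}_T$ verbatim, and your attention to the sign conditions built into $\Delta$ (which keep $1-R_0e^{2\rho(b)\tau}$ from vanishing and the log-arguments positive) is exactly where the care is needed.
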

We cite this result from \citet{BercuRouault2001}, in which $\sigma=1$. It can be also proved by evaluating  the cumulant generating function of $\mathcal{Z}_T(a,b)$ using Feynman-Kac formula, similar to the method we used to get \eqref{eq:34} (takeing $N=1$).

\subsection{Proofs of some auxiliary results}\label{TechLemma}

We start with a simple result about the form of remainder from Taylor expansion of an analytic function, that is convenient  for our purposes.
The proof is done by standard technics, that can be found for example in \citet[Theorem 8.7]{BeckBook2002} or \citet{Palka1991}.

\begin{proposition}\label{prop:taylorExpension}
Suppose that the function $f(z)$ is analytic in a domain $U$ on complex plain, with $z$ and $z_0$ contained in $U$ together with a circle $\widetilde{\mathbb{S}}$.
 Then, we have a Taylor expansion around $z_0$
\begin{align*}
f(z)=\sum_{j=0}^k\frac{f^{(j)}(z_0)}{j!}(z-z_0)^j+\mathcal{E}_k(z),
\end{align*}
with remainder
\begin{align*}
\mathcal{E}_k(z)=\frac{(z-z_0)^{k+1}}{2\pi i}\int_{\widetilde{\mathbb{S}}}\frac{f(w)dw}{(w-z_0)^{k+1}(w-z)}.
\end{align*}
\end{proposition}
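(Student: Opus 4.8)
The plan is to obtain the remainder formula directly from the Cauchy integral formula combined with the elementary finite geometric sum identity, avoiding any convergence considerations. First I would fix the circle $\widetilde{\mathbb{S}}$ so that it lies inside $U$ and encloses both $z$ and $z_0$ in its interior; since $f$ is analytic on a neighbourhood of the closed disk bounded by $\widetilde{\mathbb{S}}$, the Cauchy integral formula gives
\[
f(z)=\frac{1}{2\pi i}\int_{\widetilde{\mathbb{S}}}\frac{f(w)}{w-z}\,dw .
\]

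Next I would rewrite the Cauchy kernel algebraically. Writing $w-z=(w-z_0)-(z-z_0)$ and setting $q(w):=(z-z_0)/(w-z_0)$, which is well defined and different from $1$ for every $w\in\widetilde{\mathbb{S}}$ because $w\neq z_0$ and $w\neq z$, the finite geometric identity $\frac{1}{1-q}=\sum_{j=0}^{k}q^{j}+\frac{q^{k+1}}{1-q}$ yields the exact decomposition
\[
\frac{1}{w-z}=\sum_{j=0}^{k}\frac{(z-z_0)^{j}}{(w-z_0)^{j+1}}+\frac{(z-z_0)^{k+1}}{(w-z_0)^{k+1}(w-z)} .
\]
This is a purely algebraic identity, valid termwise on $\widetilde{\mathbb{S}}$; since the sum is finite there is no issue of convergence or of interchanging sum and integral. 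Substituting it into the Cauchy integral formula and integrating term by term gives
\[
f(z)=\sum_{j=0}^{k}(z-z_0)^{j}\,\frac{1}{2\pi i}\int_{\widetilde{\mathbb{S}}}\frac{f(w)}{(w-z_0)^{j+1}}\,dw+\frac{(z-z_0)^{k+1}}{2\pi i}\int_{\widetilde{\mathbb{S}}}\frac{f(w)}{(w-z_0)^{k+1}(w-z)}\,dw .
\]
Recognising, via the generalized Cauchy integral formula, that $\frac{1}{2\pi i}\int_{\widetilde{\mathbb{S}}}\frac{f(w)}{(w-z_0)^{j+1}}\,dw=\frac{f^{(j)}(z_0)}{j!}$, the first sum is exactly the Taylor polynomial and the second term is precisely the claimed $\mathcal{E}_k(z)$, which finishes the proof.

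There is no genuine obstacle here: the only points needing a line of care are (i) that the hypothesis ``$z$, $z_0$ and $\widetilde{\mathbb{S}}$ all lie inside $U$'' is used to guarantee both that $f$ is holomorphic on the relevant closed disk and that $z$ is interior to $\widetilde{\mathbb{S}}$, so that the Cauchy formula applies, and (ii) that the term-by-term integration requires no dominated-convergence argument because only finitely many terms occur. An alternative route would be an induction on $k$ using differentiation under the integral sign, but the geometric-series argument above is the most economical, and matches the standard treatments referenced after the statement.
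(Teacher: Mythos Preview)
Your proof is correct and follows exactly the standard argument (Cauchy integral formula plus the finite geometric identity) that the paper has in mind; the paper itself does not give a proof but merely cites \cite{BeckBook2002} and \cite{Palka1991}, where this same derivation appears. Nothing to add.
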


Next we prove a version of CLT for martingales, suitable for our needs.

\begin{lemma}\label{lemma:YN} Assume that $u_k, \ k\geq 1$, are stochastic processes with dynamics \eqref{eq:OU-Fourier}, and zero initial data. Then,
\begin{enumerate}[(i)]
\item \begin{equation}\label{eq:AsymNorm-YN}
M^{-1/2}\sum_{k=1}^N \lambda_k^{2\beta+\gamma} \int_0^T u_k dw_k \overset{d} \longrightarrow \xi,   \quad N\to\infty,
\end{equation}
where $M=\sum_{k=1}^N\lambda_k^{2\beta}$, and $\xi$ is a Gaussian random variable with mean zero and variance $\frac{\sigma^2 T}{2\theta_0}$

\item
\begin{align*}
N^{1/2}\left(\frac{1}{N}\sum_{k=1}^N\mathcal{X}_k-1\right)\overset{d} \longrightarrow \mathcal{N}(0,2),\quad\textrm{ as $N\to\infty$},
\end{align*}
where $\mathcal{X}_k:=2\theta_0\sigma^{-2}\left(1-e^{-2\theta_0\lambda_k^{2\beta} T}\right)^{-1}\lambda_k^{2\beta+2\gamma}u_k^2(T)$, for $k\geq 1$.
\end{enumerate}
\end{lemma}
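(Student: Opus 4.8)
The plan is to treat the two assertions separately; each reduces to a standard central limit theorem once the summands are correctly identified. \emph{Part (i).} Put $\zeta_k:=\lambda_k^{2\beta+\gamma}\int_0^T u_k\,dw_k$, so that the left-hand side of \eqref{eq:AsymNorm-YN} is $M^{-1/2}\sum_{k=1}^N\zeta_k$, where $M=M_N=\sum_{k=1}^N\lambda_k^{2\beta}$. Because $u_k$ is driven only by $w_k$ and the $w_k$ are independent, the $\zeta_k$ are independent, centered random variables (and the partial sums $\sum_{j\le k}\zeta_j$ form a martingale in $k$). First I would compute the variances: since $u_k(t)$ is centered Gaussian with $\bE u_k^2(t)=\sigma^2(1-e^{-2\theta_0\lambda_k^{2\beta}t})/(2\theta_0\lambda_k^{2\beta+2\gamma})$, the It\^o isometry gives
\[
\bE\zeta_k^2=\lambda_k^{4\beta+2\gamma}\int_0^T\bE u_k^2(t)\,dt=\frac{\sigma^2\lambda_k^{2\beta}}{2\theta_0}\Bigl(T-\frac{1-e^{-2\theta_0\lambda_k^{2\beta}T}}{2\theta_0\lambda_k^{2\beta}}\Bigr),
\]
whence $\sum_{k=1}^N\bE\zeta_k^2=\frac{\sigma^2T}{2\theta_0}M-\frac{\sigma^2}{4\theta_0^2}\sum_{k=1}^N(1-e^{-2\theta_0\lambda_k^{2\beta}T})$. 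Since $\lambda_k^{2\beta}\sim\varpi^\beta k^{2\beta/d}$ by \eqref{eq:asymEigenv}, one has $M\sim cN^{2\beta/d+1}$, so the correction sum is $O(N)=o(M)$ and $M^{-1}\sum_{k=1}^N\bE\zeta_k^2\to\sigma^2T/(2\theta_0)$, the variance of $\xi$.

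Next I would check the Lindeberg condition for the triangular array $\{M^{-1/2}\zeta_k\}_{1\le k\le N}$; it suffices to verify a Lyapunov condition with fourth moments. By the Burkholder--Davis--Gundy and Jensen inequalities, $\bE(\int_0^T u_k\,dw_k)^4\le C\,\bE(\int_0^T u_k^2\,dt)^2\le CT\int_0^T\bE u_k^4(t)\,dt$, and since $u_k(t)$ is Gaussian, $\bE u_k^4(t)\le 3(\sigma^2/(2\theta_0\lambda_k^{2\beta+2\gamma}))^2$; hence $\bE\zeta_k^4=O(\lambda_k^{4\beta})$ and $\sum_{k=1}^N\bE\zeta_k^4=O\bigl(\sum_{k=1}^N\lambda_k^{4\beta}\bigr)=O(N^{4\beta/d+1})$, whereas $\bigl(\sum_{k=1}^N\bE\zeta_k^2\bigr)^2\sim(\sigma^2T/(2\theta_0))^2M^2\sim c'N^{4\beta/d+2}$. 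Therefore $\bigl(\sum_k\bE\zeta_k^2\bigr)^{-2}\sum_k\bE\zeta_k^4=O(1/N)\to0$, which gives both the negligibility of the individual summands ($\max_k M^{-1}\bE\zeta_k^2=O(1/N)$) and the Lindeberg condition. The Lindeberg--Feller theorem (equivalently, the martingale central limit theorem applied to the array above, the sum of whose conditional variances converges to $\sigma^2T/(2\theta_0)$ by the previous paragraph) then yields $M^{-1/2}\sum_{k=1}^N\zeta_k\overset{d}{\longrightarrow}\mathcal N(0,\sigma^2T/(2\theta_0))$, which is \eqref{eq:AsymNorm-YN}.

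\emph{Part (ii).} This is immediate once the summand is recognized. Since $u_k(T)$ is centered Gaussian with $\Var(u_k(T))=\sigma^2(1-e^{-2\theta_0\lambda_k^{2\beta}T})/(2\theta_0\lambda_k^{2\beta+2\gamma})$, the variable $\mathcal X_k=u_k^2(T)/\Var(u_k(T))$ has a chi-squared distribution with one degree of freedom; moreover the $\mathcal X_k$ are independent (again because $u_k(T)$ depends only on $w_k$) and identically distributed, with $\bE\mathcal X_k=1$ and $\Var(\mathcal X_k)=2$. Hence $N^{1/2}\bigl(\frac1N\sum_{k=1}^N\mathcal X_k-1\bigr)\overset{d}{\longrightarrow}\mathcal N(0,2)$ by the classical i.i.d.\ central limit theorem. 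Throughout, the computations are carried out under $\bP^{N,T}_{\theta_0}$, i.e.\ with $\theta=\theta_0$ in \eqref{eq:OU-Fourier}, as the definition of $\mathcal X_k$ makes implicit.

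I expect the only genuinely technical point to be the bookkeeping in part (i): pinning down the normalizing constant (that $M^{-1}\sum_k\bE\zeta_k^2$ converges to $\sigma^2T/(2\theta_0)$ and not some other multiple) and confirming that the Lyapunov ratio is $o(1)$ via the eigenvalue asymptotics \eqref{eq:asymEigenv}. Everything else is a direct application of standard limit theorems.
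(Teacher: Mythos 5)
Your proof is correct and the overall structure matches the paper's; part~(ii) is essentially identical (recognize each $\cX_k$ as i.i.d.\ $\chi^2(1)$ and invoke the classical CLT). The difference is in part~(i): the paper first computes the same variance $\bE\zeta_k^2$, but then delegates the normal limit for the \emph{self-normalized} sum $\sum_k\zeta_k\big/\bigl(\sum_k\bE\zeta_k^2\bigr)^{1/2}$ to \citet[Theorem~2.4]{Lototsky2009Survey} (a CLT for sums of independent stochastic integrals with the required negligibility built in), and only afterwards replaces the self-normalizer by $\sqrt{M}$ using the same asymptotic $M^{-1}\sum_k\bE\zeta_k^2\to\sigma^2T/(2\theta_0)$ that you derive. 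You instead verify the Lyapunov condition with fourth moments directly, via BDG, Jensen, and the Gaussian bound $\bE u_k^4\le 3(\bE u_k^2)^2$, obtaining $\sum_k\bE\zeta_k^4=O(N^{4\beta/d+1})$ against $\bigl(\sum_k\bE\zeta_k^2\bigr)^2\asymp N^{4\beta/d+2}$, so the ratio is $O(1/N)$. Your route is more self-contained (no reliance on the external theorem) at the modest cost of a fourth-moment estimate; the paper's route is shorter because the Lindeberg-type verification is outsourced to the cited result. Both are sound, and the variance bookkeeping and eigenvalue asymptotics you use to identify the normalizing constant agree with the paper.
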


\begin{proof}
Take
\begin{align*}
\xi_k=\lambda_k^{2\beta+\gamma}\int_0^Tu_kdw_k.
\end{align*}
Note that $\mathbf{E}\xi_k=0$, and by direct calculations,
\begin{align}\label{eq:xiVar}
\textrm{Var}\xi_k=&\mathbf{E}\xi_k^2=\lambda_k^{4\beta+2\gamma}\int_0^T\mathbf{E}u_k^2dt\notag
\\
=&\frac{\sigma^2T}{2\theta}\lambda_k^{2\beta}-\frac{\sigma^2}{4\theta^2}\left(1-e^{-2\theta\lambda_k^{2\beta}T}\right)\sim k^{2\beta/d}.
\end{align}
Hence, using \citet[Theorem 2.4]{Lototsky2009Survey},  we deduce that 
\begin{align}\label{eq:AsymNorm-xiN}
\lim_{N\to\infty}\frac{\sum_{k=1}^N\lambda_k^{2\beta+\gamma}\int_0^T u_kdw_k}{\left(\sum_{k=1}^N\lambda_k^{4\beta+2\gamma}\int_0^T\mathbf{E} u_k^2dt\right)^{1/2}}\overset{d}{\longrightarrow}\mathcal{N}(0,1).
\end{align}
By \eqref{eq:xiVar} and basic calculus we have
\begin{align*}
\lim_{N\to\infty}M^{-1}\sum_{k=1}^N\lambda_k^{4\beta+2\gamma}\int_0^T\mathbf{E} u_k^2dt=\frac{\sigma^2T}{2\theta}.
\end{align*}
This, combined with \eqref{eq:AsymNorm-xiN}, concludes the first claim.

By direct evaluations, we obtain that  $u_k(T) \overset{d}{\sim} \mathcal{N}(0,\sigma_k^2)$, where
$$
\sigma_k^2=\frac{\sigma^2}{2\theta_0}\lambda_k^{-2\beta-2\gamma}\left(1-e^{-2\theta_0\lambda_k^{2\beta} T}\right).
$$
By definition $\mathcal{X}_k\overset{d}{\sim} \chi^2(1)$, for all $k\ge1$.
Since $u_k(T)$'s are independent, by Central Limit Theorem, the second claim follows immediately.
\end{proof}

In what follows we present the proofs of three technical lemmas from Section~\ref{sec:AsympMethodN}.
\begin{proof}[Proof of Lemma~\ref{lemma:CharExpan-VN}]
From \eqref{eq:MeasureChange2} we immediately have
\begin{align*}
\widetilde{\Psi}_N(u)=\exp\left[-\frac{iu\widetilde{\mathcal{H}}'(\widetilde{\epsilon}_{\eta})N}{\zeta_{\eta}\sqrt{M}}-\frac{iu\eta \sqrt{M}}{\zeta_{\eta}}+M\left(\mathcal{L}_N\left(\widetilde{\epsilon}_{\eta}+\frac{iu}{\zeta_{\eta}\sqrt{M}}\right) -\mathcal{L}_N(\widetilde{\epsilon}_{\eta})\right)\right].
\end{align*}
Let $h=\frac{iu}{\zeta_{\eta}\sqrt{M}}$, then the above $\widetilde{\Psi}_N$ can be rewritten as
\begin{align*}
\widetilde{\Psi}_N(u)=J_1^N(u)J_2^N(u)J_3^N(u),
\end{align*}
where
\begin{align*}
J_1^N(u):=&\exp\left[-\frac{iu\eta\sqrt{M}}{\zeta_{\eta}}+M\left(\widetilde{\mathcal{L}} \left(\widetilde{\epsilon}_{\eta}+h\right)-\widetilde{\mathcal{L}}(\widetilde{\epsilon}_{\eta})\right)\right],
\\
J_2^N(u):=&\exp\left[\widetilde{\mathcal{R}}_N(\widetilde{\epsilon}_{\eta}+h)-\widetilde{\mathcal{R}}_N(\widetilde{\epsilon}_{\eta})\right],
\\
J_3^N(u):=&\exp\left[N\left(\widetilde{\mathcal{H}}(\widetilde{\epsilon}_{\eta}+h)-\widetilde{\mathcal{H}} (\widetilde{\epsilon}_{\eta})-\widetilde{\mathcal{H}}'(\widetilde{\epsilon}_{\eta})h\right)\right].
\end{align*}
By Taylor  expansion formula, we have that
\begin{align*}
\ln J_1^N(u)=-\frac{u^2}{2} +M\sum_{k=3}^{n+3}\left(\frac{iu}{\zeta_{\eta}\sqrt{M}}\right)^k\frac{\widetilde{\mathcal{L}}^{(k)}(\widetilde{\epsilon}_{\eta})}{k!}.
\end{align*}
Hence, by the same method as in Lemma~\ref{lemma:CharExpan-VT}, we deduce that
\begin{align}\label{eq:L-Expan}
J_1^N(u)=\exp\left(-\frac{u^2}{2}\right) \left[1-\frac{i\widetilde{\mathcal{L}}^{(3)}(\widetilde{\epsilon}_{\eta})u^3}{6\zeta_{\eta}^3\sqrt{M}}+O\left(\frac{|u|^4+|u|^6}{M}\right)\right],
\end{align}
where the remainder is uniform in $u$ as long as $|u|=O(M^{1/6})$. Similarly, we get
\begin{align*}
\ln J_3^N(u)=N\cdot O(h^2)=O\left(\frac{u^2N}{M}\right),
\end{align*}
where the remainder is uniform in $u$ as long as $|u|=O(\sqrt{M})$, and also
\begin{align}\label{eq:H-Expan}
J_3^N(u)=&1+ N\left(\widetilde{\mathcal{H}}(\widetilde{\epsilon}_{\eta}+h)-\widetilde{\mathcal{H}} (\widetilde{\epsilon}_{\eta})-\widetilde{\mathcal{H}}'(\widetilde{\epsilon}_{\eta})h\right)+O\left(\frac{u^4N^2}{M^2}\right)\notag
\\
=&1-\frac{u^2N}{2\zeta_{\eta}^2M}\widetilde{\mathcal{H}}''(\widetilde{\epsilon}_{\eta})+N\cdot O(h^3)+O\left(\frac{u^4N^2}{M^2}\right)\notag
\\
=&1-\frac{u^2N}{2\zeta_{\eta}^2M}\widetilde{\mathcal{H}}''(\widetilde{\epsilon}_{\eta})+O\left(\frac{|u|^3N}{M^{3/2}}+\frac{u^4N^2}{M^2}\right),
\end{align}
where the remainder is uniform in $u$ as long as $|u|=O(\sqrt{M/N})$.

Define
\begin{align*}
g(h):=\frac{1-\mathcal{D}(h)} {1+\mathcal{D}(h)},\qquad a_k(h):=2\lambda_k^{2\beta}T\sqrt{\theta_1^2+(\theta_1^2-\theta_0^2)h}.
\end{align*}
In the following, $\mathbb{S}$ denotes any circle around the singularities of the integrand. By Proposition~\ref{prop:taylorExpension} we have
\begin{align}\label{eq:RNVariate}
\widetilde{\mathcal{R}}_N(\widetilde{\epsilon}_{\eta}+h)=&-\frac{1}{2}\sum_{k=1}^N\frac{g(\widetilde{\epsilon}_{\eta}+h)\exp(-a_k(\widetilde{\epsilon}_{\eta}+h))}{2\pi i}G_k(\widetilde{\epsilon}_{\eta}+h)\notag
\\
=&-(4\pi i)^{-1}g(\widetilde{\epsilon}_{\eta}+h)\sum_{k=1}^N\exp(-a_k(\widetilde{\epsilon}_{\eta}+h))G_k(\widetilde{\epsilon}_{\eta}+h),
\end{align}
where
\begin{align*}
G_k(\widetilde{\epsilon}_{\eta}+h)=\int_{\mathbb{S}}\frac{\ln zdz}{(z-1)\left(z-1-g(\widetilde{\epsilon}_{\eta}+h)\exp(-a_k(\widetilde{\epsilon}_{\eta}+h))\right)}.
\end{align*}
Furthermore, $G_k(\cdot)$'s are differentiable and from the definition of $a_k$ we know that, for $T$ large enough, $\{G_k\}$ and $\{G_k'\}$ are uniformly locally bounded.
Again using Proposition~\ref{prop:taylorExpension}, we get the following three expansions,
\begin{align}\label{eq:RNExpan1}
G_k(\widetilde{\epsilon}_{\eta}+h)=&G_k(\widetilde{\epsilon}_{\eta})+G_k'(\widetilde{\epsilon}_{\eta})h+\frac{h^2}{2\pi i}\int_{\mathbb{S}}\frac{G_k(z)dz} {(z-\widetilde{\epsilon}_{\eta})^2\left(z-\widetilde{\epsilon}_{\eta}-h\right)}\notag
\\
=&G_k(\widetilde{\epsilon}_{\eta})+\frac{iu}{\zeta_{\eta}\sqrt{M}}G_k'(\widetilde{\epsilon}_{\eta})+O\left(\frac{u^2}{M}\right), \\
\exp(-a_k(\widetilde{\epsilon}_{\eta}+h))=&e^{-a_k(\widetilde{\epsilon}_{\eta})}-e^{-a_k(\widetilde{\epsilon}_{\eta})}a_k'(\widetilde{\epsilon}_{\eta})h+\frac{h^2}{2\pi i}\int_{\mathbb{S}}\frac{e^{-a_k(z)}dz} {(z-\widetilde{\epsilon}_{\eta})\left(z-\widetilde{\epsilon}_{\eta}-h\right)}\notag
\\
=&e^{-a_k(\widetilde{\epsilon}_{\eta})}-e^{-a_k(\widetilde{\epsilon}_{\eta})}a_k'(\widetilde{\epsilon}_{\eta})h+\frac{e^{-a_k(\widetilde{\epsilon}_{\eta})/2}h^2}{2\pi i}\int_{\mathbb{S}}\frac{e^{a_k(\widetilde{\epsilon}_{\eta})/2-a_k(z)}dz} {(z-\widetilde{\epsilon}_{\eta})\left(z-\widetilde{\epsilon}_{\eta}-h\right)}\notag
\\
=&e^{-a_k(\widetilde{\epsilon}_{\eta})}-\frac{iu}{\zeta_{\eta}\sqrt{M}}e^{-a_k(\widetilde{\epsilon}_{\eta})}a_k'(\widetilde{\epsilon}_{\eta}) +e^{-a_k(\widetilde{\epsilon}_{\eta})/2}O\left(\frac{u^2}{M}\right),\label{eq:RNExpan2} \\
g(\widetilde{\epsilon}_{\eta}+h)=&g(\widetilde{\epsilon}_{\eta})+g'(\widetilde{\epsilon}_{\eta})h+O\left(h^2\right) =g(\widetilde{\epsilon}_{\eta})+\frac{iu}{\zeta_{\eta}\sqrt{M}}g'(\widetilde{\epsilon}_{\eta})+O\left(\frac{u^2}{M}\right), \label{eq:RNExpan3}
\end{align}
where in the first two expansion the remainder is uniform in $k$ and $u$, and in the last expansion the remainder is uniform in $u$,  as long as $|u|=O(\sqrt{M})$.
Using \eqref{eq:RNExpan1}--\eqref{eq:RNExpan3} into \eqref{eq:RNVariate}, we obtain
\begin{align*}
\widetilde{\mathcal{R}}_N(\widetilde{\epsilon}_{\eta}+h)=&-(4\pi i)^{-1}\left(g(\widetilde{\epsilon}_{\eta})+\frac{iu}{\zeta_{\eta}\sqrt{M}}g'(\widetilde{\epsilon}_{\eta}) +O\left(\frac{u^2}{M}\right)\right)
\\
&\sum_{k=1}^N\left(G_k(\widetilde{\epsilon}_{\eta})e^{-a_k(\widetilde{\epsilon}_{\eta})}+ \left(G_k'(\widetilde{\epsilon}_{\eta})e^{-a_k(\widetilde{\epsilon}_{\eta})}-G_k(\widetilde{\epsilon}_{\eta}) e^{-a_k(\widetilde{\epsilon}_{\eta})}a_k'(\widetilde{\epsilon}_{\eta})\right) \frac{iu}{\zeta_{\eta}\sqrt{M}}+e^{-a_k(\widetilde{\epsilon}_{\eta})/2}O\left(\frac{u^2}{M}\right)\right)
\\
=&C_0^N+C_\eta\frac{u}{4\pi\zeta_{\eta}\sqrt{M}}+O\left(\frac{|u|+|u|^2}{M}\right),
\end{align*}
where the remainder is uniform in $k$ and $u$, for $|u|=O(\sqrt{M})$, and where
\begin{align*}
C_0^N=&-(4\pi i)^{-1}g(\widetilde{\epsilon}_{\eta})\sum_{k=1}^N G_k(\widetilde{\epsilon}_{\eta})e^{-a_k(\widetilde{\epsilon}_{\eta})},
\\
C_\eta=&-g'(\widetilde{\epsilon}_{\eta})\sum_{k=1}^\infty G_k(\widetilde{\epsilon}_{\eta})e^{-a_k(\widetilde{\epsilon}_{\eta})}- g(\widetilde{\epsilon}_{\eta})\sum_{k=1}^\infty G_k'(\widetilde{\epsilon}_{\eta})e^{-a_k(\widetilde{\epsilon}_{\eta})}- G_k(\widetilde{\epsilon}_{\eta})e^{-a_k(\widetilde{\epsilon}_{\eta})}a_k'(\widetilde{\epsilon}_{\eta}).
\end{align*}
Using the definition of $G_k$, we find that $C_0^N=\widetilde{\mathcal{R}}_N(\widetilde{\epsilon}_{\eta})$.
Therefore,
\begin{align}\label{eq:R-Expan}
J_2^N(u)=&1+ \frac{C_\eta u}{4\pi\zeta_{\eta}\sqrt{M}}+O\left(\frac{|u|+|u|^2}{M}\right),
\end{align}
where the remainder is uniform in $u$, for $|u|=O(\sqrt{M})$.

Finally, combining \eqref{eq:L-Expan}, \eqref{eq:H-Expan} and \eqref{eq:R-Expan} we immediately have \eqref{eq:CharExpan-VN}, and this concludes the proof.

\end{proof}

\begin{proof}[Proof of Lemma~\ref{lemma:BN-AsymEst}]
As in the proof of Lemma~\ref{lemma:BT-AsymEst}, we first need to show that the characteristic function $\widetilde{\Psi}_N$ can be controlled uniformly by an integrable function for large $N$.
For simplicity, we use $C$ to denote constants independent of $u$, $T$ and $N$, whose value may change from line to line.

For $\eta=-\frac{(\theta_1-\theta_0)^2}{4\theta_0}T$,  we have
\begin{align*}
\left|J_2^N(u)\right|=&\left|\exp\left[\widetilde{\mathcal{R}}_N(\widetilde{\epsilon}_{\eta}+h) -\widetilde{\mathcal{R}}_N(\widetilde{\epsilon}_{\eta})\right]\right|
\\
=&\left|\prod_{k=1}^N\left(1+\frac{1-\mathcal{D}(-1+h)}{1+\mathcal{D}(-1+h)}\exp\left(-a_k(-1+h)\right)\right)^{-1/2}\right|
\\
\le&\prod_{k=1}^N\left(1-\left|\frac{1-\mathcal{D}(-1+h)}{1+\mathcal{D}(-1+h)}\right|\left|\exp\left(-a_k(-1+h)\right)\right|\right)^{-1/2}
\\
=&\prod_{k=1}^N\left(1-\frac{\left|D_1^2(h)-D_2^2(h)\right|}{\left|D_1(h)+D_2(h)\right|^2}\left|\exp\left(-a_k(-1+h)\right)\right|\right)^{-1/2},
\end{align*}
where $h=\frac{iu}{\zeta_{\eta}\sqrt{M}}$,
$D_1(h):=\theta_0+(\theta_1-\theta_0)h$, and $D_2(h):=\sqrt{\theta_0^2+(\theta_1^2-\theta_0^2)h}.$
Using the definitions of $D_j$'s, it is not hard to verify that
\begin{align}\label{eq:D1D2Est}
D_2(h)=&\left(\left(\left(\theta_0^4+(\theta_1^2-\theta_0^2)^2|h|^2\right)^{1/2}+\theta_0^2\right)/2\right)^{1/2}\notag
\\
&+i\textrm{sign}(u) \left(\left(\left(\theta_0^4+(\theta_1^2-\theta_0^2)^2|h|^2\right)^{1/2}-\theta_0^2\right)/2\right)^{1/2},\notag
\\
\left|D_1^2(h)-D_2^2(h)\right|=&2\theta_0(\theta_1-\theta_0)|h|\sqrt{1+|h|^2},\qquad \left|D_1(h)+D_2(h)\right|^2\ge4\theta_0^2,
\end{align}
and also, by using the fact that $\sqrt{2x+2y}\ge\sqrt{x}+\sqrt{y}$, we have the following estimate
\begin{align}\label{eq:akExEst}
\left|\exp\left(-a_k(-1+h)\right)\right|=&\exp\left[-\lambda_k^{2\beta}T \left(2\left(\theta_0^4+(\theta_1^2-\theta_0^2)^2|h|^2\right)^{1/2}+2\theta_0^2\right)^{1/2}\right]\notag
\\
\le&\exp\left[-\lambda_k^{2\beta}T \left(C+C|h|\right)^{1/2}\right]\le\exp\left[-CT\lambda_k^{2\beta} \left(1+\sqrt{|h|}\right)\right].
\end{align}
Therefore, by all the above analysis and Taylor's expansion, we have
\begin{align*}
\ln\left|J_2^N(u)\right|\le&-\frac{1}{2}\sum_{k=1}^N\ln\left(1-\frac{\left|D_1^2(h)-D_2^2(h)\right|} {\left|D_1(h)+D_2(h)\right|^2}\left|\exp\left(-a_k(-1+h)\right)\right|\right)
\\
\le&-\frac{1}{2}\sum_{k=1}^N\ln\left(1-C|h|\sqrt{1+|h|^2}\exp\left[-CT\lambda_k^{2\beta} \left(1+\sqrt{|h|}\right)\right]\right)
\\
\le&-\frac{1}{2}\sum_{k=1}^N\ln\left(1-Ce^{-CT\lambda_k^{2\beta}}\right)= C\sum_{k=1}^N\left(1-\delta_k\right)^{-1}e^{-CT\lambda_k^{2\beta}},
\end{align*}
where $\delta_k$'s  are some positive numbers between $0$ and $Ce^{-CT\lambda_k^{2\beta}}$, and it easy to observe that if we choose $T$ large enough, then $1-\delta_k$ are uniformly bounded up away from $0$.
This implies that for large enough $T$ we have the following estimate
\begin{align}\label{eq:J2Est}
\left|J_2^N(u)\right|\le&C\sum_{k=1}^Ne^{-CT\lambda_k^{2\beta}}\le C.
\end{align}
As far as $J_3^N(u)$, just notice that
\begin{align*}
\left|J_3^N(u)\right|=&\left|\exp\left[N\left(\widetilde{\mathcal{H}}(\widetilde{\epsilon}_{\eta}+h)-\widetilde{\mathcal{H}} (\widetilde{\epsilon}_{\eta})-\widetilde{\mathcal{H}}'(\widetilde{\epsilon}_{\eta})h\right)\right]\right|
\\
=&\left|\exp\left[N\left(\widetilde{\mathcal{H}}(-1+h)-\widetilde{\mathcal{H}}(-1)\right)\right]\right|
\\
=&\left|\left(\frac{1}{2}+\frac{1}{2}\mathcal{D}(-1+h)\right)^{-N/2}\right| =\left(\frac{2|D_2(h)|}{\left|D_1(h)+D_2(h)\right|}\right)^{N/2}.
\end{align*}
Using the fact that $(1+x)^{1/4}\le 1+x/4$, we obtain
\begin{align}\label{eq:D2normEst}
\theta_0\le|D_2(h)|=&\left(\theta_0^4+(\theta_1^2-\theta_0^2)^2|h|^2\right)^{1/4}\le\theta_0+Cu^2/M.
\end{align}
Then, we have the estimate
\begin{align}\label{eq:J3Est}
\left|J_3^N(u)\right|\le&\left(1+Cu^2/M\right)^{N/2}.
\end{align}
For estimating $J_1^N(u)$, we first need to refer to the proof of Lemma~\ref{lemma:CharExpan-VT}. Define
\begin{align*}
\Psi_{V_T}(u)=J_1^T(u)J_2^T(u)J_3^T(u),
\end{align*}
where
\begin{align*}
J_1^T(u)=&\exp\left[-\frac{iu\bar{\eta}\sqrt{T}}{\varsigma_{\bar{\eta}}}+T\left(\mathcal{L}\left(\epsilon_{\bar{\eta}} +\bar{h}\right)-\mathcal{L}(\epsilon_{\bar{\eta}})\right)\right],\qquad \bar{h}=\frac{iu}{\varsigma_{\bar{\eta}}\sqrt{T}}
\\
J_2^T(u)=&\exp\left[\mathcal{H}(\epsilon_{\bar{\eta}}+\bar{h})-\mathcal{H}(\epsilon_{\bar{\eta}})\right],\qquad J_3^T(u)=\exp\left[\mathcal{R}_N(\epsilon_{\bar{\eta}}+\bar{h})-\mathcal{R}_N(\epsilon_{\bar{\eta}})\right].
\end{align*}
To avoid the confliction of the notations, here we use $\bar{\eta}$, $\bar{h}$ instead of $\eta$, $h$.
By repeatedly using the fact that $\sqrt{1+x}\le 1+x/2$, we obtain
\begin{align*}
\left|D_1(\bar{h})+D_2(\bar{h})\right|\le2\theta_0+Cu^2/T.
\end{align*}
Now take $N=1$ and $M=\lambda_1^{2\beta}$. Then, by the above inequality and \eqref{eq:D2normEst} we know that, for $\bar{\eta}=-\frac{(\theta_1-\theta_0)^2}{4\theta_0}\lambda_1^{2\beta}$,
\begin{align*}
|J_2^T(u)|^{-1}=\left(\frac{\left|D_1(\bar{h})+D_2(\bar{h})\right|}{2|D_2(\bar{h})|}\right)^{1/2}\le\left(1+Cu^2/T\right)^{1/2}.
\end{align*}
By \eqref{eq:D1D2Est} and \eqref{eq:akExEst}, we have
\begin{align*}
|J_3^T(u)|^{-1}\le&\left(1+\left|\frac{1-\mathcal{D}(-1+\bar{h})}{1+\mathcal{D}(-1+\bar{h})}\right|\left| \exp\left(-a_1(-1+\bar{h})\right)\right|\right)^{1/2}
\\
=&\left(1+\frac{\left|D_1^2(\bar{h})-D_2^2(\bar{h})\right|}{\left|D_1(\bar{h})+D_2(\bar{h})\right|^2}\left| \exp\left(-a_k(-1+\bar{h})\right)\right|\right)^{1/2}\le C.
\end{align*}
Therefore, by \eqref{eq:CharExControl} and the above estimates we have
\begin{align*}
|J_1^T(u)|=|\Psi_{V_T}(u)||J_2^T(u)|^{-1}|J_3^T(u)|^{-1}\le C\left(1+\frac{\mu_1 u^2}{T}\right)^{-\mu_2 T}\left(1+\frac{Cu^2}{T}\right)^{1/2}.
\end{align*}
Then, for any $T'>0$, we have that
\begin{align*}
|J_1^T(u)|=&\left|\exp\left[T\lambda_1^{2\beta}\left(D_1(\bar{h})-D_2(\bar{h})\right)/2\right]\right|
\\
=&\left|\exp\left[TT'\left(D_1(iu/\varsigma_{\bar{\eta}}\sqrt{T})-D_2(iu/\varsigma_{\bar{\eta}}\sqrt{T})\right)/2\right]\right| ^{\lambda_1^{2\beta}/T'}\le C\left(1+\frac{\mu_1 u^2}{T}\right)^{-\mu_2 T}\left(1+\frac{Cu^2}{T}\right)^{1/2}.
\end{align*}
Since the above inequality holds true for all  parameters, as long as $T$ is large enough, we do the following substitution and rescaling
\begin{align*}
T=M,\qquad T'=T,\qquad u=\varsigma_{\bar{\eta}}u/\zeta_{\eta}
\end{align*}
that yields that for some constant $\widetilde{\mu}_1$ and $\widetilde{\mu}_2$ and large enough $M$
\begin{align*}
\left|\exp\left[MT\left(D_1(h)-D_2(h)\right)/2\right]\right| ^{\lambda_1^{2\beta}/T}\le C\left(1+\frac{\widetilde{\mu}_1 u^2}{M}\right)^{-\widetilde{\mu}_2 M}\left(1+\frac{Cu^2}{M}\right)^{1/2}.
\end{align*}
Since $|J_1^N(u)|=\left|\exp\left[MT\left(D_1(h)-D_2(h)\right)/2\right]\right|$, we also have
\begin{align}\label{eq:J1Est}
|J_1^N(u)|\le C\left(1+\frac{\widetilde{\mu}_1 u^2}{M}\right)^{-\widetilde{\mu}_2TM/\lambda_1^{2\beta}}\left(1+\frac{Cu^2}{M}\right)^{T/2\lambda_1^{2\beta}}.
\end{align}
Now combining \eqref{eq:J2Est}--\eqref{eq:J1Est}, for large enough $N$, we obtain the followings estimates
\begin{align}\label{eq:ExControl-PsiN}
|\widetilde{\Psi}_N|\le C\left(1+\frac{\widetilde{\mu}_1 u^2}{M}\right)^{-\widetilde{\mu}_2TM/\lambda_1^{2\beta}}\left(1+\frac{Cu^2}{M}\right)^{N/2+T/2\lambda_1^{2\beta}}\le C\left(1+\frac{\widetilde{\mu}_1 u^2}{M}\right)^{-\widetilde{\mu}_2TM/2\lambda_1^{2\beta}}.
\end{align}
This implies that $\widetilde{\Psi}_N$ is square integrable, and it can be controlled uniformly by an integrable function.

Follow similar procedures as in Lemma~\ref{lemma:BT-AsymEst},  and by some direct evaluations, for $\eta=-\frac{(\theta_1-\theta_0)^2}{4\theta_0}T$,
we have the following identities
\begin{align*}
\widetilde{B}_N=&\bE_N\left(\exp\left(-\widetilde{\epsilon}_{\eta}\zeta_{\eta}\sqrt{M}\widetilde{V}_N-\widetilde{\epsilon}_{\eta} \widetilde{\mathcal{H}}'(\widetilde{\epsilon}_{\eta})N\right)\1_{\{\widetilde{V}_N\le\widetilde{\eta}\sqrt{M}/\zeta_{\eta}\}}\right)
\\
=&\exp\left(-\widetilde{\epsilon}_{\eta} \widetilde{\mathcal{H}}'(\widetilde{\epsilon}_{\eta})N\right)\int_{-\infty}^{\widetilde{\eta}\sqrt{M}/\zeta_{\eta}} \exp\left(-\widetilde{\epsilon}_{\eta}\zeta_{\eta}\sqrt{M}v\right)\left((2\pi)^{-1}\int_{\mathbb{R}}\exp(-iuv)\widetilde{\Psi}_N(u)du\right)dv
\\
=&\frac{\exp\left(-\widetilde{\epsilon}_{\eta} \widetilde{\mathcal{H}}'(\widetilde{\epsilon}_{\eta})N\right)}{2\pi}\int_{\mathbb{R}}\widetilde{\Psi}_N(u)\left(\int_{-\infty}^{\widetilde{\eta}\sqrt{M}/\zeta_{\eta}} \exp\left(-\widetilde{\epsilon}_{\eta}\zeta_{\eta}\sqrt{M}v-iuv\right)dv\right)du
\\
=&-\frac{\exp\left(-\widetilde{\epsilon}_{\eta} \widetilde{\mathcal{H}}'(\widetilde{\epsilon}_{\eta})N-\widetilde{\epsilon}_{\eta}\widetilde{\eta}M\right)}{2\pi\widetilde{\epsilon}_{\eta}\zeta_{\eta}\sqrt{M}} \int_{\mathbb{R}}\left(1+\frac{iu}{\widetilde{\epsilon}_{\eta}\zeta_{\eta}\sqrt{M}}\right)^{-1} \exp\left(-iu\widetilde{\eta}\sqrt{M}/\zeta_{\eta}\right)\widetilde{\Psi}_N(u)du
\\
=&\frac{\exp\left(\frac{(\theta_1-\theta_0)^2}{8\theta_0^2}N-\frac{\sqrt{T}(\theta_1^2-\theta_0^2) q_{\alpha}}{\sqrt{8\theta_0^3}}\sqrt{M}-\frac{\sqrt{T}(\theta_1^2-\theta_0^2)}{\sqrt{8\theta_0^3}}\delta\right)} {2\pi\zeta_{\eta} \sqrt{M}}
\\
&\qquad\qquad\times\int_{\mathbb{R}}\left(1-\frac{iu}{\zeta_{\eta}\sqrt{M}}\right)^{-1} \exp\left(iu(q_{\alpha}+\delta/\sqrt{M})\right)\widetilde{\Psi}_N(u)du,
\end{align*}
where
\begin{align*}
\widetilde{\eta}=&-\frac{\widetilde{\mathcal{H}}'(\widetilde{\epsilon}_{\eta})N}{M}+\frac{(\theta_1-\theta_0)^2N}{8\theta_0^2M} -\frac{\sqrt{T}(\theta_1^2-\theta_0^2)}{\sqrt{8\theta_0^3M}}q_{\alpha} -\frac{\sqrt{T}(\theta_1^2-\theta_0^2)}{\sqrt{8\theta_0^3}M}\delta
\\
=&-\frac{\sqrt{T}(\theta_1^2-\theta_0^2)}{\sqrt{8\theta_0^3M}}q_{\alpha} -\frac{\sqrt{T}(\theta_1^2-\theta_0^2)}{\sqrt{8\theta_0^3}M}\delta,
\end{align*}
since $\widetilde{\mathcal{H}}'(\widetilde{\epsilon}_{\eta})=(\theta_1-\theta_0)^2/8\theta_0^2$.
Using \eqref{eq:CharExpan-VN} and Dominated  Convergent Theorem, we conclude that the last integral converges, as $N\to\infty$, to a finite non-zero constant, and this concludes the proof.
\end{proof}

\begin{proof}[Proof of Lemma~\ref{lemma:ProbExpan-IN}]
Define $I^N:=Y^N-X^N$, and let us use $\widetilde{\Psi}_{I^N}$ to denote the characteristic function of $I^N$ under probability measure $\bP^{N,T}_{\theta_0}$.
By the same approach as in Lemma~\ref{lemma:ProbExpan-IT}, one can show that, for $\eta=-\frac{(\theta_1-\theta_0)^2}{4\theta_0}T$,
\begin{align*}
\widetilde{\Psi}_{I^N}(u)=\widetilde{\Psi}_N(-u).
\end{align*}
Then, by taking $\eta=-\frac{(\theta_1-\theta_0)^2}{4\theta_0}T$ in \eqref{eq:CharExpan-VN}, we get
\begin{align*}
\widetilde{\Psi}_{I^N}(u)=e^{-u^2/2}+\psi_1(u)M^{-1/2}+\psi_2(u)NM^{-1}+\mathbf{r}_N(u),
\end{align*}
where
\begin{align*}
\psi_1(u)=&-\left(\frac{iu^3}{\sqrt{2\theta_0T}}+\frac{(\theta_1-\theta_0)iu}{\sqrt{8\theta_0T}(\theta_1+\theta_0)} \left(\sum_{k=1}^{\infty}e^{-2\theta_0T\lambda_k^{2\beta}}\right)\right)e^{-u^2/2},
\\
\psi_2(u)=&\frac{(\theta_1-\theta_0)(5\theta_1^2+6\theta_1\theta_0-3\theta_0^2)} {8\theta_0(\theta_1+\theta_0)(\theta_1^2-\theta_0^2)T}u^2e^{-u^2/2},
\\
\mathbf{r}_N(u)=&e^{-u^2/2}O\left(\frac{|u|+|u|^6}{M}+\frac{|u|^3N}{M^{3/2}}+\frac{u^4N^2}{M^2}\right).
\end{align*}
Note that the high order term $\mathbf{r}_N(u) e^{u^2/2}$ is uniformly bounded for $|u|=O(N^\nu)$.

First, we will prove \eqref{eq:ProbExpan-IN} for the case $\delta=0$, for which we will suppress the index $\delta$ in $\Phi_1^\delta(x)$, $\Phi_2^\delta(x)$ and $\mathfrak{R}_N^{\delta}(x)$. Put
\begin{align}\label{eq:RemainDefN}
\mathbf{R}_N(x):=\bP^{N,T}_{\theta_0}\left(I^N\le x\right)-\int_{-\infty}^x\int_{\mathbb{R}}(2\pi)^{-1}e^{-iuv}\left(e^{-u^2/2}+\psi_1(u)M^{-1/2}+\psi_2(u)NM^{-1}\right)dudv.
\end{align}
Note that $\widetilde{\Psi}_{I^N}(u)$ also satisfies \eqref{eq:ExControl-PsiN}, which admits the representation
\begin{align*}
\int_{\mathbb{R}}e^{iux}d\mathbf{R}_N(x)=&\int_{\mathbb{R}}e^{iux}d\bP^{N,T}_{\theta_0}\left(I^N\le x\right)-\left(e^{-u^2/2}+\psi_1(u)M^{-1/2}+\psi_2(u)NM^{-1}\right)
\\
=&\widetilde{\Psi}_{I^N}(u)-\left(e^{-u^2/2}+\psi_1(u)M^{-1/2}+\psi_2(u)NM^{-1}\right)=\mathbf{r}_N(u).
\end{align*}
Then, following \citet[Lemma 4 p.77 and Theorem 12 p.33]{Cramer1970}), we  obtain that
\begin{align*}
\mathbf{R}_N(x)=O\left(\int_{CN^\nu}^{\infty}\frac{|\widetilde{\Psi}_{I^N}(u)|}{u}du+M^{-1}+NM^{-3/2}+N^2M^{-2}\right),
\end{align*}
for some constant $C>0$, and where the remainder is uniform in $x$.
This, combined with \eqref{eq:ExControl-PsiN}, yields that
\begin{align*}
\mathbf{R}_N(x)=&O\left(O\left(\left(1+\frac{\widetilde{\mu}_1 (CN^\nu)^2}{M}\right)^{-\widetilde{\mu}_2TM/4\lambda_1^{2\beta}}\right)+M^{-1}+NM^{-3/2}+N^2M^{-2}\right)
\\
=&O\left(O\left(e^{-\widetilde{\mu}_3 MN^{-2\nu}}\right)+M^{-1}+NM^{-3/2}+N^2M^{-2}\right)=O\left(M^{-1}+NM^{-3/2}+N^2M^{-2}\right),
\end{align*}
where $\widetilde{\mu}_3$ is some positive constant, and the remainder is uniform in $x$. Thus, if we set
\begin{align*}
\mathfrak{R}_N(x)=\left(M^{-1}+NM^{-3/2}+N^2M^{-2}\right)^{-1}\mathbf{R}_N(x)
\end{align*}
and recall \eqref{eq:RemainDefN}, then \eqref{eq:ProbExpan-IN} for $\delta=0$ follows immediately with
\begin{align*}
\Phi_k(x)=&\int_{-\infty}^x\int_{\mathbb{R}}(2\pi)^{-1}e^{-iuv}\psi_k(u)dudv,\qquad k=1,2.
\end{align*}
Since $\mathbf{R}_N(\cdot)$ is uniform in $x$, we have that $\mathfrak{R}_N(\cdot)$ is uniformly bounded for $N\to\infty$.
Taking into account the form of $\psi_k$, and the definition of $\Phi_k$'s, the  smoothness and boundedness of $\Phi_k(\cdot)$ and its derivatives follow immediately.

Now we consider the case $\delta\neq0$. Since $\Phi(x)$ and $\Phi_k(x)$ have bounded derivatives, we have the expansions
\begin{align*}
\Phi(x+\delta M^{-1/2})=&\Phi(x)+\Phi'(x)\delta M^{-1/2}+O(M^{-1}),
\\
\Phi_k(x+\delta M^{-1/2})=&\Phi_k(x)+O(M^{-1/2}),\qquad 0\le k\le n,
\end{align*}
where the remainder is uniformly bounded in $x$. Note that, from the above case $\delta=0$, and with $x:=x+\delta M^{-1/2}$ in \eqref{eq:ProbExpan-IN}, we already have
\begin{align*}
\bP^{N,T}_{\theta_0}\left(I_T\le x+\delta M^{-1/2}\right)=&\Phi(x+\delta M^{-1/2})+\Phi_1(x+\delta M^{-1/2})M^{-1/2}+\Phi_2(x+\delta M^{-1/2})NM^{-1}\notag
\\
&+\mathfrak{R}_N(x+\delta M^{-1/2})\left(M^{-1}+NM^{-3/2}+N^2M^{-2}\right).
\end{align*}
Combining the above two relations implies \eqref{eq:ProbExpan-IN} with
\begin{align*}
\Phi_1^\delta(x)=\delta\Phi'(x)+\Phi_1(x),\qquad \Phi_2^\delta(x)=\Phi_2(x).
\end{align*}
Then, by explicitly computing $\Phi_1(x)$ and $\Phi_2(x)$, we finish the proof.
\end{proof}

\end{appendix}

%\bibliographystyle{abbrvnat}
%\bibliographystyle{plainnat}
%%\bibliographystyle{./bibfiles/unsrtdin}
%\bibliography{igor_bib_probability-05-18-2014}

\def\cprime{$'$}

}
\end{document}